\newtheorem{theorem}{Theorem}
\newtheorem{proposition}[theorem]{Proposition}
\newtheorem{lemma}[theorem]{Lemma}
\newtheorem{corollary}[theorem]{Corollary}
\newtheorem{definition}[theorem]{Definition}
\theoremstyle{remark}
\newtheorem{remark}[theorem]{Remark}
\newcommand{\conj}[1]{\overline{#1}} 
\newcommand{\Lap}{\mathcal{L}} 
\DeclareMathOperator{\range}{ran} 
\DeclareMathOperator{\adjoint}{adj} 
\newcommand{\tnorm}[1]{{\left\vert\kern-0.25ex\left\vert\kern-0.25ex\left\vert #1
    \right\vert\kern-0.25ex\right\vert\kern-0.25ex\right\vert}}
\newcommand{\conv}{\star}
\newcommand{\sol}[3]{\mathcal{S}^{#1}_{#2\to #3}}
\newcommand{\T}{\mathbb{T}}
\newcommand{\R}{\mathbb{R}}
\newcommand{\C}{\mathbb{C}}
\newcommand{\Z}{\mathbb{Z}}
\newcommand{\N}{\mathbb{N}}
\newcommand{\ee}{\mathrm{e}} 
\newcommand{\ii}{\mathrm{i}} 
\newcommand{\dd}{\mathrm{d}} 
\newcommand{\DD}{\mathrm{D}} 
\newcommand{\stat}{\mathrm{st}} 
\newcommand{\pert}{\mathrm{pt}} 
\newcommand{\init}{\mathrm{in}} 
\newcommand{\compF}{\mathsf{F}}
\newcommand{\Id}{\mathrm{Id}}
\newcommand{\vkV}{\mathcal{V}}
\newcommand{\sX}{\mathcal{X}} 
\newcommand{\sY}{\mathcal{Y}} 
\DeclareMathOperator*{\esssup}{ess\, sup}
\begin{document}
\title{Stability of partially locked states in the Kuramoto model
  through Landau damping with Sobolev regularity}
\author{Helge Dietert\thanks{CNRS, Sorbonne Université, Université Paris Diderot,
  Institut de Mathématiques de Jussieu-Paris Rive Gauche, IMJ-PRG, F-75013, Paris, France\newline
  Email: \texttt{helge.dietert@imj-prg.fr}\newline
  The author thankfully acknowledges support by the ANR Chaire
d'Excellence ANR-11-IDEX-005 and the People Programme (Marie Curie
Actions) of the European Union’s Seventh Framework Programme
(FP7/2007-2013) under REA grant agreement n.  PCOFUND-GA-2013-609102,
through the PRESTIGE programme coordinated by Campus France.}}

\maketitle

\begin{abstract}
  The Kuramoto model is a mean-field model for the synchronisation
  behaviour of oscillators, which exhibits Landau damping. In a recent
  work, the nonlinear stability of a class of spatially inhomogeneous
  stationary states was shown under the assumption of analytic
  regularity. This paper proves the nonlinear Landau damping under the
  assumption of Sobolev regularity. The weaker regularity required the
  construction of a different more robust bootstrap argument, which
  focuses on the nonlinear Volterra equation of the order parameter.
\end{abstract}

\section{Introduction}

The Kuramoto model
\cite{kuramoto-1984-chemical-oscillation,kuramoto-1975-self-entrainment-coupled-oscillators}
is a mean-field model for the interaction of oscillators, which shows
the Landau damping behaviour
\cite{strogatz-mirollo-matthews-1992-coupled-nonlinear-oscillators,
  chiba-2015-proof-kuramoto-conjecture,
  benedetto-caglioti-montemagno-2016-exponential-dephasing,
  fernandez-gerard-varet-giacomin-2016-landau-damping,
  dietert-2016-stability-bifurcation,
  dietert-fernandez-gerard-varet-2018-landau-damping-pls}. On
the particle level the model consists of oscillators $i=1,\dots,N$,
which are modelled by their position $\theta_i$, the phase angle on the
torus $\T = \R / (2\pi\Z)$, and their velocity $\omega_i \in \R$, the
intrinsic frequency. The evolution is determined by the system of ODEs
\begin{equation*}
  \left\{
    \begin{aligned}
      &\frac{\dd}{\dd t} \theta_i = \omega_i + \frac{K}{N}
      \sum_{j=1}^{N} \sin(\theta_j-\theta_i),\\
      &\frac{\dd}{\dd t} \omega_i = 0
    \end{aligned}
  \right.
\end{equation*}
for $i=1,\dots,N$, where $K$ is the coupling constant. The intuition
is that each oscillator $i$ evolves according to its own intrinsic
frequency $\omega_i$ and according to a global coupling, which tries
to synchronise the oscillators.

The overall synchronisation is described by the order parameter
\(r \in \C\) given by
\begin{equation*}
  r = \frac{1}{N} \sum_{j=1}^{N} \ee^{\ii \theta_j}.
\end{equation*}
The order parameter also describes the coupling so that the evolution
can be written in the mean-field form
\begin{equation*}
  \frac{\dd}{\dd t} \theta_i = \omega_i + \frac{K}{2\ii}
  \left( r\, \ee^{-\ii \theta_i} - \conj{r}\, \ee^{\ii \theta_i} \right).
\end{equation*}

For the study of a large number of oscillators ($N\to\infty$), the
mean-field limit is used. In the mean-field limit the system is
described by a measure $f(t,\theta,\omega)\,\dd\theta\dd\omega$ for
the distribution of oscillators and the evolution is described by the
PDE
\begin{equation}
  \label{eq:kuramoto-mean-field-pde}
  \left\{
    \begin{aligned}
      &\partial_t f(t,\theta,\omega)
      + \partial_\theta
      \left(
        \left[
          \omega +
          \frac{K}{2\ii} \left(r(t)\, \ee^{-\ii \theta} -
            \conj{r(t)}\, \ee^{\ii \theta} \right)
        \right]
        f(t,\theta,\omega)
      \right)
      = 0, \\
      &r(t) = \int_{\R} \int_{\T} \ee^{\ii \theta} f(t,\theta,\omega)\,
      \dd \theta\, \dd \omega.
    \end{aligned}
  \right.
\end{equation}
Due to the Lipschitz regular interaction, this limit can be justified
in the mean-field limit framework by
\textcite{braun-hepp-1977-vlasov-dynamics-and-its-fluctuations,
  dobrushin-1979-vlasov,
  neunzert-1984-introduction-nonlinear-boltzmann-vlasov}, see
\cite{sakaguchi-1988-cooperative-phenomena-in-coupled-oscillator,
  strogatz-1991-stability-incoherence-coupled-oscillators,
  lancellotti-2005-vlasov-limit-nonlinear-coupled-oscillators}. In
particular, this shows the well-posedness of the PDE
\eqref{eq:kuramoto-mean-field-pde}.

The evolution preserves the velocity distribution
\begin{equation*}
  g(\omega) = \int_{\T} f(\theta,\omega)\, \dd \theta.
\end{equation*}
Together with the coupling constant \(K\) the velocity distribution
characterises the typical behaviour observed by physicists
\cite{strogatz-2000-kuramoto-to-crawford,dietert-fernandez-2018-asymptotic-stability}:
If \(K\) is sufficiently small, the solution converges to a spatially
homogeneous state, i.e.\ a state not depending on \(\theta\). For
larger \(K\), the system undergoes a bifurcation and converges to a
stationary state \(f_\stat\) with order parameter \(r_\stat \not = 0\)
(after possibly taking out a global drift which amounts to redefining
\(g\)). The behaviour around the spatially homogeneous state is easier
to understand and has been treated in
\cite{chiba-medvedev-2016-preprint-kuramoto-on-graphs,%
  chiba-2015-proof-kuramoto-conjecture,%
  fernandez-gerard-varet-giacomin-2016-landau-damping,%
  dietert-2016-stability-bifurcation,%
  benedetto-caglioti-montemagno-2016-exponential-dephasing}. This work
focuses on stationary states with \(r_\stat \not = 0\).

The system is invariant under the rotation symmetry $R_{\Theta}$
changing $\theta \to \theta + \Theta$, i.e.
\begin{equation*}
  (R_{\Theta} f)(\theta,\omega) = f(\theta+\Theta, \omega).
\end{equation*}
Hence for finding stationary states we can take w.l.o.g.\ \(r_\stat \in [0,1]\).

Given \(r_\stat \in [0,1]\), the distribution \(f_\stat\) of the
stationary state must be
\begin{equation}
  \label{eq:stationary-state-physical}
  f_{\stat}(\theta,\omega) =
  \begin{dcases}
    \delta_{\arcsin(\omega/(Kr_{\stat}))}(\theta)\, g(\omega)
    &\text{if $|\omega|\le K r_{\stat}$}, \\
    \frac{\sqrt{\omega^2 - (Kr_{\stat})^2}}{2\pi |\omega -
      Kr_{\stat}\sin \theta|}\, g(\omega)
    &\text{if $|\omega|>K r_{\stat}$},
  \end{dcases}
\end{equation}
where oscillators with \(|\omega| \le K r_{\stat}\) are locked at the
stable fixed point $\arcsin(\omega/(Kr_{\stat}))$ with
$|\arcsin(\omega/(Kr_{\stat}))| < \pi/2$ (the other fixed-point
$\pi - \arcsin(\omega/(Kr_{\stat}))$ is unstable) and oscillators with
\(|\omega| > K r_{\stat}\) are drifting. This dichotomy gives the name
partially locked states (PLS) to these stationary states.

A choice \(r_{\stat} \in [0,1]\) corresponds to a stationary state if
and only if the self-consistency equation
\begin{equation*}
  r_\stat = \int_{\R} \int_{\T} \ee^{\ii \theta} f_{\stat}(\theta,\omega)\,
  \dd \theta\, \dd \omega
\end{equation*}
is satisfied
\cite{mirollo-strogatz-2007-spectrum-of-pls-for-kuramoto-model,%
  mirollo-strogatz-2007-spectrum-of-pls-for-kuramoto-model,%
  strogatz-2000-kuramoto-to-crawford,%
  omelchenko-wolfrum-2013-bifurcations-sakaguchi-kuramoto}. By
continuity arguments the existence of such states can be assured
\cite{dietert-fernandez-gerard-varet-2018-landau-damping-pls,%
  dietert-fernandez-2018-asymptotic-stability}.

After finding the stationary states, the next question is their
stability under perturbations. Factoring out the rotation symmetry,
the author identified with Fernandez and Gérard-Varet in
\cite{dietert-fernandez-gerard-varet-2018-landau-damping-pls} the
linear stability criterion and showed nonlinear stability, under the
assumption of analytic regularity in $\omega$.

This work extends the analysis to the case of Sobolev regularity of
the stationary state and the perturbation. A main difficulty lies in
the handling of the nonlinearity: in the previous work
\cite{dietert-fernandez-gerard-varet-2018-landau-damping-pls} a linear
theory was developed in a suitable functional analytic setting and the
nonlinearity is treated as perturbation. In the case of Sobolev
regularity, this is no longer possible and we need to device a
bootstrap argument taking into account the structure of the
nonlinearity.

The main idea is to study the evolution of a perturbation under the
transport including the nonlinearity. For the order parameter we find
in the linear case a Volterra convolution equation in time. Instead of
putting the nonlinear effect solely into the forcing, we move a part
of the nonlinearity into the kernel of the Volterra equation by
evolving the perturbations under the nonlinear transport. On the level
of the Volterra equation we do not see the regularity problem and can
close an estimate for the order parameter. This then allows a
bootstrap argument controlling the evolution.

A large part of the technical complications comes from handling the
rotation symmetry. In order to factor it out, we construct a rotation
angle \(\Theta : \R^+ \mapsto \R\) and aim to show that
\(R_{-\Theta(t)} f(t)\) converges to \(f_{\stat}\).

In order to measure the perturbation take the Fourier transform
$\hat{f}$ of $f$, which takes $\theta$ to $\ell$ and $\omega$ to $\xi$
with the convention
\begin{equation*}
  (\hat{f})_\ell(\xi) = \int_{\R} \int_{\T}
  \ee^{-\ii \ell \theta - \ii \xi \omega} f(\theta,\omega)\,
  \dd \theta\, \dd \omega
\end{equation*}
and accordingly
\begin{equation*}
  \hat{g}(\xi) = \int_{\R}
  \ee^{- \ii \xi \omega} g(\omega)\,
  \dd \omega.
\end{equation*}
For \(\hat{f}\) introduce the norm \(\|\cdot\|_{p_b}\) by
\begin{equation*}
  \| u \|^2_{p_b} = \sum_{\ell \ge 1} \int_{\xi=0}^{\infty}
  \Big(
    |u_\ell(\xi)|^2 + |\partial_\xi u_\ell(\xi)|^2
  \Big)
  |p_b(\xi)|^2 \frac{1}{\ell} \dd \xi
\end{equation*}
and accordingly
\begin{equation}
  \label{eq:def-norm-ghat}
  \| \hat{g} \|_{p_b}^2
  = \int_0^\infty
  \Big(
    |\hat{g}(\xi)|^2 + |\partial_\xi \hat{g}(\xi)|^2
  \Big)
  |p_b(\xi)|^2\, \dd \xi,
\end{equation}
where \(p_b(\xi) = (1+\xi)^b\) and \(b \ge 0\). In physical space the
weight corresponds to Sobolev regularity \(H^b\) except that we only
measure non-negative frequencies \(\xi \ge 0\). The advantage of this
measure for the initial perturbation is that
\(\|\hat{f}_{\stat}\|_{p_b} < \infty\) for
\(\| \hat{g} \|_{p_{b+1}} < \infty\), even though, \(f_{\stat}\) is
singular and consists of Dirac measures.

Postponing the linear stability condition, we now formulate the main
result.
\begin{theorem}
  \label{thm:nonlinear-control-final}
  Let $b > 3/2$ and $b_g > b + 3$. Let $f_{\stat}$ be a
  stationary state which is linearly stable in the sense of
  Definition \ref{def:linear-stability} and with velocity distribution
  \(g\) satisfying
  \begin{equation*}
    \| \hat{g} \|_{p_{b_g}} < \infty.
  \end{equation*}
  Furthermore, assume that one of the following conditions holds:
  \begin{itemize}
  \item $b > 3$,
  \item $b_g > b+5$.
  \end{itemize}
  Then there exist constants $C$ and $\delta$ such that for initial
  data $f_{\init}$ with the same velocity distribution \(g\) and
  \begin{equation*}
    \| \hat{f}_{\init} - \hat{f}_{\stat} \|_{p_b} \le \delta
  \end{equation*}
  there exists a unique global solution $f$ to
  \eqref{eq:kuramoto-mean-field-pde} and
  $\Theta : \R^+ \mapsto \R$ with
  \begin{equation*}
    |\Theta(0)| \le C\, \| \hat{f}_{\init} - \hat{f}_{\stat} \|_{p_b}
    \quad\text{and}\quad
    \left|\frac{\dd}{\dd t}{\Theta}(t)\right| \le C\, (1+t)^{2 - 2b} \| \hat{f}_{\init} - \hat{f}_{\stat} \|_{p_b}^2
  \end{equation*}
  such that the remaining perturbation
  \begin{equation*}
    u = R_{-\Theta(t)} \hat{f} - \hat{f}_{\stat}
  \end{equation*}
  has order parameter
  \begin{equation*}
    \conj{\eta}(t) = u_1(t,0)
  \end{equation*}
  decaying for all times $t$ as
  \begin{equation*}
    |\eta(t)| \le C\, (1+t)^{\frac 12 -b} \| u_{\init} \|_{p_b}.
  \end{equation*}
\end{theorem}

Measuring the decay through the order parameter $\eta$ of the
remaining perturbation $u$, this shows the decay of a small initial
perturbation. Furthermore, the bound on $\dot{\Theta}$ shows that the
system will converge to a nearby partially locked state because
$\Theta(t)$ converges to some $\Theta_\infty$ and
$|\Theta_\infty - \Theta(0)|$ is controlled by
$\| u_{\init} \|_{p_b}$. As the order parameter captures the
nonlinear behaviour further results of weak convergence can easily be
deduced, cf.~\cite[Theorem~39]{dietert-2016-stability-bifurcation}.

The needed regularity and the achieved decay rate are coming from the
handling of the rotation symmetry.

\section{Overview and setup}

By the rotation symmetry we consider throughout a stationary state
\(f_\stat\) with order parameter \(r_\stat \in [0,1]\).

Under the Fourier transform the evolution
\eqref{eq:kuramoto-mean-field-pde} of \(f\) becomes
\begin{equation}
  \label{eq:kuramoto-mean-field-pde-fourier}
  \left\{
    \begin{aligned}
      &\partial_t \hat{f}_\ell(t,\xi) = \ell \partial_\xi \hat{f}_\ell(t,\xi)
      + \frac{K\ell}{2}
      \left(
        \conj{r(t)}\, \hat{f}_{\ell-1}(t,\xi)
        - r(t)\, \hat{f}_{\ell+1}(t,\xi)
      \right),\\
      &\conj{r(t)} = \hat{f}_{1}(t,0).
    \end{aligned}
  \right.
\end{equation}

The stability of the stationary state $f_{\stat}$ is studied by
considering a solution $f = f_{\stat} + f_{\pert}$ where $f_{\pert}$
is the perturbation. From \eqref{eq:kuramoto-mean-field-pde-fourier}
the evolution is given by
\begin{equation*}
  \partial_t \hat{f}_{\pert} = L \hat{f}_{\pert} + Q(\hat{f}_{\pert})
\end{equation*}
where $L=L_1+L_2$ with
\begin{align*}
  (L_1 \hat{f}_{\pert})_{\ell}
  &= \ell \partial_\xi (\hat{f}_{\pert})_{\ell}
  + \frac{K\ell}{2} \left(r_{\stat}\, (\hat{f}_{\pert})_{\ell-1} -
  r_{\stat}\, (\hat{f}_{\pert})_{\ell+1}\right), \\
  (L_2 \hat{f}_{\pert})_{\ell}
  &= \frac{K\ell}{2} \left((\hat{f}_{\pert})_1(0)\, (\hat{f}_{\stat})_{\ell-1} -
    \conj{(\hat{f}_{\pert})_1(0)}\,  (\hat{f}_{\stat})_{\ell+1}\right),\\
  (Q(\hat{f}_{\pert}))_{\ell}
  &= \frac{K\ell}{2} \left((\hat{f}_{\pert})_1(0)\, (\hat{f}_{\pert})_{\ell-1} -
    \conj{(\hat{f}_{\pert})_1(0)}\,  (\hat{f}_{\pert})_{\ell+1}\right).
\end{align*}
Here \(L_1\) corresponds to the transport of the perturbation under
the order parameter \(r_{\stat}\) of the stationary state. This
transport creates the phase-mixing and thus the decay by Landau
damping. The operator \(L_2\) is a finite-dimensional operator
capturing the effect of the order parameter of the perturbation onto
the stationary state. The linear stability condition prescribes that
this effect is decaying. The nonlinearity \(Q\) is the effect of the
order parameter of the perturbation onto the transport of the
perturbation.

In Fourier space the rotation symmetry $R_{\Theta}$ acts as
\begin{equation}
  \label{eq:definition-hat-rot-theta}
  (\hat{R}_{\Theta}\hat{f})_\ell(\xi) = \ee^{\ii \ell \Theta} \hat{f}_\ell(\xi).
\end{equation}
The symmetry means that for any $\Theta$ and solution $f$ also
$R_{\Theta} f$ is a solution. In particular, $R_{\Theta} f_{\stat}$ is
also a stationary solution with the same behaviour. Therefore, along
the rotation symmetry, a perturbation does not decay and we need to
study orbital stability, i.e.\ if a solution $f$ converges to the set
$\{R_{\Theta} f_{\stat}\}_{\Theta\in\T}$.

In order to separate the rotation behaviour, we introduce polar type
coordinates for states close to the circle
$\{R_{\Theta} f_{\stat}\}_{\Theta\in\T}$. In these coordinates, the
solution is written as
\begin{equation*}
  \hat{f}(t) = \hat{R}_{\Theta(t)} (\hat{f}_{\stat} + u(t)),
\end{equation*}
where $\Theta(t)$ is a suitable chosen angle and $u$ is the remaining
perturbation. The time evolution of $u$ is then given by
\begin{equation}
  \label{eq:nonlinear-full-u}
  \partial_t u = L u + Q(u)
  - \frac{\dd \Theta}{\dd t}
  \left(
    \DD \hat{R} \hat{f}_{\stat}
    + \DD \hat{R} u
  \right),
\end{equation}
where $\DD \hat{R}$ denotes the differential of
$\Phi \to \hat{R}_\Phi$ at $\Phi = 0$.

Under the linear evolution and constant \(\Theta\) we find by
Duhamel's principle that
\begin{equation}
  \label{eq:duhamel-linear}
  u(t) = \ee^{tL_1} u_\init + \int_0^t \ee^{(t-s)L_1} L_2 u(s)\, \dd s.
\end{equation}
The operator \(L_2\) is the effect of the order-parameter of the
perturbation and can thus be written as
\begin{equation*}
  L_2 u = r_r\, \Re \conj{\eta}(t) + r_i\, \Im \conj{\eta}(t)
\end{equation*}
with
\begin{equation*}
  (r_r)_{\ell}(\xi) = \frac{K\ell}{2}
  \left(
    (\hat{f}_\stat)_{\ell-1}(\xi)
    - (\hat{f}_\stat)_{\ell+1}(\xi)
  \right)
\end{equation*}
and
\begin{equation*}
  (r_i)_{\ell}(\xi) = \frac{K\ell\ii}{2}
  \left(
    (\hat{f}_\stat)_{\ell-1}(\xi)
    + (\hat{f}_\stat)_{\ell+1}(\xi)
  \right).
\end{equation*}
Computing the order parameter \(\conj{\eta}_L(t)=u_1(t,0)\) from the
Duhamel formulation \eqref{eq:duhamel-linear} of the linearised
evolution, we find the Volterra convolution equation
\begin{equation}
  \label{eq:volterrra-eta-linearised}
  \begin{pmatrix}
    \Re \conj{\eta}_L \\ \Im \conj{\eta}_L
  \end{pmatrix}(t)
  +
  \left[
  k_{Lc} \conv
  \begin{pmatrix}
    \Re \conj{\eta}_L \\ \Im \conj{\eta}_L
  \end{pmatrix}
  \right](t)
  = \compF_L(t),
\end{equation}
with the usual convolution \(\conv\) denoting
\begin{equation*}
  \left[k_{Lc} \conv
  \begin{pmatrix}
    \Re \conj{\eta}_L \\ \Im \conj{\eta}_L
  \end{pmatrix}
  \right](t)
  =
  \int_0^t k_{Lc}(t-s)
  \begin{pmatrix}
    \Re \conj{\eta}_L \\ \Im \conj{\eta}_L
  \end{pmatrix}(s)
  \,\dd s
\end{equation*}
and the forcing
\begin{equation*}
  \compF_L(t) =
  \begin{pmatrix}
    \Re F_L(t) \\
    \Im F_L(t)
  \end{pmatrix}
  \qquad\text{with}\qquad
  F_L(t) =
  (\ee^{tL_1} u_\init)_1(0).
\end{equation*}
and the convolution kernel
\begin{equation*} \setlength\arraycolsep{4pt}
  k_{Lc}(\tau) = -
  \begin{pmatrix}
    \Re (\ee^{\tau L_1} r_r)_1(0) & \Re (\ee^{\tau L_1} r_i)_1(0) \\
    \Im (\ee^{\tau L_1} r_r)_1(0) & \Im (\ee^{\tau L_1} r_i)_1(0)
  \end{pmatrix}.
\end{equation*}
The unique solution of such a Volterra convolution equation
\eqref{eq:volterrra-eta-linearised} can be expressed by
\begin{equation}
  \label{eq:linear-volterra-resolvent-solution}
  \begin{pmatrix}
    \Re \conj{\eta}_L \\ \Im \conj{\eta}_L
  \end{pmatrix}
  = \compF_L - r_{Lc} \conv \compF_{L},
\end{equation}
where \(r_{Lc} : \R^+ \mapsto \C^2\) is the resolvent kernel, which is
the unique solution to
\begin{equation}
  \label{eq:linear-resolvent-definition}
  r_{Lc} + k_{Lc} \conv r_{Lc}
  = r_{Lc} + r_{Lc} \conv k_{Lc}
  = k_{Lc},
\end{equation}
see \cite[Theorem~3.1 and Theorem~3.5 of
Chapter~2]{gripenberg-londen-staffans-1990-volterra}.

The operator \(L_2\) is due to the explicit complex and real part of
\(\conj{\eta}\) not complex linear but only real linear. However,
treating \(\Re\conj{\eta}_L\) and \(\Im\conj{\eta}_L\) as formally
independent, we can perform a spectral analysis. As the convolution
turns into a product under the Laplace transform, the linear stability
can be determined by the characteristic equation
\begin{equation}
  \label{eq:characterstic-eq-linear-volterra}
  \det\Big(\Id + (\Lap k_{Lc})(z)\Big) = 0
\end{equation}
with the Laplace transform
\begin{equation*}
  (\Lap k_{Lc})(z)
  = \int_0^\infty k_{Lc}(t)\, \ee^{-zt}\, \dd t.
\end{equation*}

For the assumed regularity of the stationary state, \(k_{Lc}\) is
decaying with a sufficiently fast polynomial rate to define the linear
stability:
\begin{definition}
  \label{def:linear-stability}
  The stationary state $f_{\stat}$ is linearly stable up to the
  rotation invariance if $z=0$ is the only solution of the
  characteristic equation \eqref{eq:characterstic-eq-linear-volterra}
  in $\Re z \ge 0$ and
  \begin{equation*}
    \frac{\dd}{\dd z}
    \det\Big(\Id + (\Lap k_{Lc})(z)\Big)
    \Bigg|_{z=0}
    \not = 0.
  \end{equation*}
\end{definition}
Indeed we will see in Proposition~\ref{thm:linear-resolvent-bound}
that this condition is sharp for the original system and no spurious
eigenmodes are created by the complexification. Moreover, up to a
change of basis corresponding to a different complexification this
agrees with the linear stability condition from the analytic setting
in \cite{dietert-fernandez-gerard-varet-2018-landau-damping-pls}.

With the rotation symmetry the resolvent takes the form
\begin{equation}
  \label{eq:linear-resolvent-splitting}
  r_{Lc} = K_{\Theta} + r_{Lcs},
\end{equation}
where \(r_{Lcs}\) is decaying by the linear stability and $K_\Theta$
is a constant matrix corresponding to the rotation. Looking at the
rotation eigenmode, we find that $K_{\Theta}$ takes the form
\begin{equation}
  \label{eq:volterra-rotation-eigenmode-resolvent-form}
  K_{\Theta} =
  \begin{pmatrix}
    0 \\ 1
  \end{pmatrix}
  \otimes
  \begin{pmatrix}
    c_{\Theta,r} & c_{\Theta,i}
  \end{pmatrix},
\end{equation}
where \(c_{\Theta,r}\) and \(c_{\Theta,i}\) are constants.

Hence the order parameter \(\conj{\eta}_L\) is decaying if and only if
the forcing $F_L$ satisfies
\begin{equation*}
  K_{\Theta}
  \int_0^\infty
  \compF_L(t)
  \,\dd t
  = 0
  \qquad\text{or}\qquad
  \int_0^\infty \Big(c_{\Theta,r} \Re F_L(t) + c_{\Theta,i} \Im F_L(t)\Big)\, \dd t = 0.
\end{equation*}
Therefore, we characterise the stable part of the linear evolution by
the functional \(\alpha\) given by
\begin{equation}
  \label{eq:stable-subspace-functional-alpha}
  \alpha(u)
  = \int_0^\infty \Big(c_{\Theta,r} \Re (\ee^{tL_1} u)_1(0) + c_{\Theta,i} \Im (\ee^{tL_1} u)_1(0)\Big)\, \dd t.
\end{equation}

For the full nonlinear behaviour, we therefore want to choose the
angle \(\Theta(t)\) such that always \(\alpha(u(t)) = 0\). Initially,
we can find such an angle \(\Theta(0)\), see
Lemma~\ref{thm:local-choice-theta}, and, as long as the perturbation
is small enough, this condition is propagated in
\eqref{eq:nonlinear-full-u} if
\begin{equation*}
  \frac{\dd \Theta}{\dd t}
  = \dot{\Theta},
\end{equation*}
where
\begin{equation}
  \label{eq:def-theta-dot}
  \dot{\Theta} = \dot{\Theta}(u)
  := \frac{\alpha(Q(u))}{\alpha(\DD\hat{R}\hat{f}_{\stat})+\alpha(\DD\hat{R}u)}.
\end{equation}
As $\dot{\Theta}$ is a function of $u$, this defines a closed
evolution for the perturbation \(u\).

For proving the nonlinear stability result, the challenge is that the
nonlinear terms are unbounded and cannot be controlled by the linear
evolution as perturbation. However, they have the same structure as
\(L_1\) for which we have a good energy estimate for the stability.

In order to exploit the structure we perform a bootstrap argument:
taking \(\eta(t) = \conj{u_1(t,0)}\) and
\(\dot{\Theta}(t) = \dot{\Theta}(u(t))\) as given functions of time, we
define the time-dependent complex linear operator
$B_{1n}^{\eta,\dot{\Theta}}$ and time-dependent real linear operator
$B_2^{\eta,\dot{\Theta}}$ by
\begin{align}
  \label{eq:def-b1n}
  (B^{\eta,\dot{\Theta}}_{1n} v)_\ell
  &= \frac{K\ell}{2}
  \Big(\conj{\eta(t)}\, v_{\ell-1} - \eta(t)\, v_{\ell+1}\Big)
  - \ii \ell \dot{\Theta}(t)\, v_{\ell}, \\
  (B^{\eta,\dot{\Theta}}_{2} v)_\ell
  &= - \alpha(B^{\eta,\dot{\Theta}}_{1n} v)\, r_{\Theta},
\end{align}
where
\begin{equation*}
  r_{\Theta} := \frac{\DD \hat{R} \hat{f}_{\stat}}
  {\alpha(\DD \hat{R} \hat{f}_{\stat})}.
\end{equation*}

For the rotation we find
\begin{equation*}
  (\DD \hat{R} u)_\ell = \ii \ell u_\ell
\end{equation*}
so that the solution \(u\) of the full evolution satisfies
\begin{equation*}
  B^{\eta,\dot{\Theta}}_{1n} u = Q(u) - \dot{\Theta}\, \DD \hat{R} u
\end{equation*}
and
\begin{equation*}
  B^{\eta,\dot{\Theta}}_{2} u = - \dot{\Theta}\, \DD \hat{R} f_\stat.
\end{equation*}

Hence the solution can be expressed as
\begin{equation}
  \label{eq:solution-u-linear-b}
  \partial_t u = B^{\eta,\dot{\Theta}} u + L_2 u
\end{equation}
with
\(B^{\eta,\dot{\Theta}} = B^{\eta,\dot{\Theta}}_1 +
B^{\eta,\dot{\Theta}}_2\) and
\(B^{\eta,\dot{\Theta}}_1 = L_1 + B^{\eta,\dot{\Theta}}_{1n}\).  Here
we collected terms with the same structure as \(L_1\) in
\(B^{\eta,\dot{\Theta}}_1\) and \(B^{\eta,\dot{\Theta}}_2\) is a
finite-dimensional operator keeping \(\alpha(u) = 0\).

For better readability, we will often drop the explicit dependence of
\(\eta,\dot{\Theta}\) and just write \(B_1=B^{\eta,\dot{\Theta}}_1\),
\(B_2=B^{\eta,\dot{\Theta}}_2\) and \(B=B^{\eta,\dot{\Theta}}\).

For the (time-dependent) operators \(E=L_1,B^{\eta,\dot{\Theta}}_{1}\)
or \(B^{\eta,\dot{\Theta}}\), the evolution problem
\begin{equation*}
  \partial_t v = E(t)\, v
\end{equation*}
has a unique weak-solution over a time range where the coefficients
are continuous. The corresponding solution operator from time \(s\) to
time \(t\) is denoted by \(\sol{E}st\), i.e.\
\(v(t) = \sol{E}st v_{\init}\) is defined via
\begin{equation*}
  \partial_t v(t) = E(t)\, v(t),\quad v(s) = v_{\init}.
\end{equation*}

Hence the solution $u$ can be expressed by Duhamel's principle as
\begin{equation}
  \label{eq:solution-u-duhamel}
  u(t) = \sol{B}0t u_{\init} + \int_0^t \sol{B}st L_2 u(s)\, \dd s
\end{equation}
with the initial data $u_{\init}$ at time $t=0$. Then the real linearity of \(B\) implies that
\begin{equation*}
  \sol Bst (L_2 u(s))
  = (\sol Bst r_r)\, (\Re \conj{\eta}(t))
  + (\sol Bst r_i)\, (\Im \conj{\eta}(t)).
\end{equation*}
Computing the order parameter $\conj{\eta}(t) = u_1(t,0)$ over
\eqref{eq:solution-u-duhamel}, we find the Volterra equation
\begin{equation}
  \label{eq:volterrra-eta}
  \begin{pmatrix}
    \Re \conj{\eta} \\ \Im \conj{\eta}
  \end{pmatrix}
  (t)
  +
  \left[
    k \conv
    \begin{pmatrix}
      \Re \conj{\eta} \\ \Im \conj{\eta}
    \end{pmatrix}
  \right](t) = \compF(t)
\end{equation}
with the Volterra kernel
\begin{equation*} \setlength\arraycolsep{4pt}
  k(t,s) = -
  \begin{pmatrix}
    \Re (\sol Bst r_r)_1(0) & \Re (\sol Bst r_i)_1(0) \\
    \Im (\sol Bst r_r)_1(0) & \Im (\sol Bst r_i)_1(0)
  \end{pmatrix}
\end{equation*}
using the product notation
\begin{equation}
  \label{eq:volterra-special-product}
  \left[k \conv
  \begin{pmatrix}
    \Re \conj{\eta} \\ \Im \conj{\eta}
  \end{pmatrix}
  \right](t)
  =
  \int_0^t k(t,s)
  \begin{pmatrix}
    \Re \conj{\eta} \\ \Im \conj{\eta}
  \end{pmatrix}(s)
  \,\dd s.
\end{equation}
and the forcing
\begin{equation*}
  \compF(t) =
  \begin{pmatrix}
    \Re F(t) \\
    \Im F(t)
  \end{pmatrix}
  \qquad\text{with}\qquad
  F(t) = (\sol B0t u_\init)_1(0).
\end{equation*}

The linear convolution kernel \(k_{Lc}\) can be identified in this
general Volterra equation by \(k_{L}(t,s) = k_{Lc}(t-s)\) for
\(t \ge s\). Assuming by a bootstrap assumption that \(\eta(t)\) and
\(\dot{\Theta}\) are small, we can control the difference between the
Volterra kernel \(k\) and the Volterra kernel of the linearised
evolution \(k_L\). Here we use that for the kernel elements, we always
evolve \(r_r\) and \(r_i\), which are constant in time and can have
better regularity. For a small enough difference between \(k\) and
\(k_{L}\), we can find a resolvent for the Volterra equation by a von
Neumann series. This in turn then controls the order parameter.

Hence we can sketch the overall bootstrap argument: assuming that
\(\eta(t)\) and \(\dot{\Theta}\) are small, we show by the Volterra
equation that for a perturbation \(v\) the order parameter
\((\sol{B}st v)_1(0)\) is decaying. With the control of the order
parameter, we can go back to \eqref{eq:solution-u-duhamel} and control
seminorms of the perturbation, which then close the bootstrap argument.

Here the key point is that we handle the nonlinearities not as forcing
but by the time-dependent transport \((\sol{B}st v)_1(0)\) of the
perturbations \(v\) and this allows the exploitation of the divergence
structure of the nonlinearity. This means that we arrive at a Volterra
equation with a modified kernel, where we can exploit the different
regularity of the different parts of the perturbation.

As noted before the velocity distribution \(g\) is constant in time
and we assumed that the initial data \(f_{\init}\) and the nearby
stationary state \(f_{\stat}\) have the same velocity
distribution. This means that for the perturbation \(u_0 \equiv
0\). In \cite{dietert-2016-stability-bifurcation} it was noted that
only the nearest Fourier modes interact so that one can consider the
evolution of the perturbation in \(\ell \ge 1\) and \(\xi \ge 0\).

For the different terms of the perturbation \(u\), we therefore
introduce the weighted Sobolev spaces
\begin{equation*}
  \sX_{\phi,k} = \{v : \N \times \R_{\ge 0} \mapsto \C \text{ with }
  \| v \|_{\phi,k} < \infty\}
\end{equation*}
with the norm
\begin{equation*}
  \| v \|_{\phi,k}^2
  = \sum_{\ell \ge 1} \int_0^\infty
  \Big(
    |v_\ell(\xi)|^2 + |\partial_\xi v_\ell(\xi)|^2
  \Big)
  |\phi(\xi)|^2 \ell^{2k} \dd \xi
\end{equation*}
for the weight $\phi$ and degree $k$ and use the shorthands
\begin{equation*}
  \sX_{\phi} = \sX_{\phi,-\frac 12}
  \quad\text{and}\quad
  \| v \|_{\phi} = \| v \|_{\phi,-\frac 12}.
\end{equation*}
For the weight \(\phi\) we use \(p_{A,b}\) defined by
\begin{equation}
  \label{eq:def-weight-pab}
  p_{A,b}(\xi) = (A+\xi)^b
  \qquad\text{ with  }\qquad
  p_{b} = p_{1,b}.
\end{equation}
For \(v \in \sX\) we use throughout in the definition of the operators
the convention that \(v_0 \equiv 0\).

The structure of the norm is similar to the used norms in
\cite{dietert-fernandez-gerard-varet-2018-landau-damping-pls}. The
weight \(\phi\) encodes the half-sided regularity, which is
transformed to decay of the order parameter under the
transport. Otherwise the norm is motivated by the fact that we want
pointwise control and a Hilbert space structure.

Recalling the analogous definition \eqref{eq:def-norm-ghat} of
\(\| \hat{g} \|_{p_b}\), the first result is to show that the norms
are well-adapted to the stationary states $\hat{f}_{\stat}$, as
defined in \eqref{eq:stationary-state-physical}.
\begin{proposition}
  \label{thm:regularity-stationary-state}
  Let $b \ge 0$ and $k=0,1/2,1,3/2,\dots$. Then there exists a
  constant $C_\stat$ only depending on $k$, $b$ and $K r_\stat$ such
  that the stationary state \(f_{\stat}\) satisfies
  \begin{equation*}
    \| \hat{f}_{\stat} \|_{p_{b},k}
    \le C_{\stat} \| \hat{g} \|_{p_{b+k+1}},
  \end{equation*}
  where $\hat{f}_{\stat}$ is restricted to $\ell \ge 1$ and
  $\xi \ge 0$.
\end{proposition}
A crucial ingredient for the control is that we have chosen our
stationary state $\hat{f}_{\stat}$ in
\eqref{eq:stationary-state-physical} such that all locked oscillators
are at the stable fixed-point, cf.\
\cite{dietert-fernandez-gerard-varet-2018-landau-damping-pls}.

In the spaces \(\sX_{p_b}\) we can show that the transport under
\(L_1\) and \(B_1\) is well-defined and quantify the decay through the
transport.

\begin{lemma}
  \label{thm:evolution-l1-b1}
  Let $E=L_1$, or let $E = B_1$ and assume that
  the coefficients of $B_1$ are continuous for the considered time
  range $J=[0,T]$. Fix $b \ge 0$. Then for \(v \in \sX_{p_b}\) and
  \(s \in J\) the evolution equation
  \begin{equation*}
    \left\{
      \begin{aligned}
        &\partial_t w = E\, w,\\
        &w(s) = v
      \end{aligned}
    \right.
  \end{equation*}
  has a unique weak solution $w \in C_w([s,T], \sX_{p_b})$, i.e.\
  $w \in L^\infty([s,T],\sX_{p_{b}})$ and is weakly
  continuous. Setting \(\sol{E}st v = w(t)\) defines a well-defined
  solution operator, which is complex linear. For \(s\le t\le T\) and
  \(A\ge 1\) the following estimate holds
  \begin{equation*}
    \| \sol Est v \|^2_{p_{A+t-s,b}}
    +
    \int_s^t
    \left| \left(\sol Es\tau v\right)_1(0) \right|^2 (A+\tau-s)^{2b}\, \dd \tau
    \le
    \| v \|^2_{p_{A,b}}.
  \end{equation*}
\end{lemma}

For \(L_1\) the coefficients are constant and we have just constructed
the semigroup \(\ee^{tL_1}\) as \(\sol{L_1}st = \ee^{(t-s)L_1}\).

For the effect of a perturbation part \(v\) we introduce the seminorms
$\beta_{\eta}$, $\beta_{\alpha}$ and $\beta_{d}$ by
\begin{align}
  \label{eq:def-seminorm-beta}
  \beta_{\eta}(v) &= |v_1(0)|, \\
  \label{eq:def-seminorm-ba}
  \beta_{\alpha}(v) &=  \int_0^\infty
                      \left| \left(\ee^{tL_1}v\right)_1(0) \right|
                      \,\dd t, \\
  \label{eq:def-seminorm-bd}
  \beta_d(v) &= \max
               \left\{
               \beta_{\alpha}\Big( (\ell v_{\ell-1})_{\ell} \Big),
               \beta_{\alpha}\Big( (\ell v_\ell)_{\ell} \Big),
               \beta_{\alpha}\Big( (\ell v_{\ell+1})_{\ell} \Big)
               \right\}.
\end{align}
Here \(\beta_{\eta}\) allows a control of the order parameter, while
\(\beta_{\alpha}(v)\) will allow a control of \(\alpha(v)\). Finally
\(\beta_d(v)\) will allow a control of
\(\alpha(B^{\eta,\dot{\Theta}}_{1n}v)\).

These seminorms can be controlled by the weighted Sobolev norms as
follow.
\begin{lemma}
  \label{thm:control-seminorms-by-weighted-sobolev}
  There exists a numerical constant $C_S$ such that for $A \ge 1$ and
  $b \ge 0$
  \begin{equation*}
    \beta_\eta(u) \le C_S\, A^{-b} \, \| u \|_{p_{A,b}}.
  \end{equation*}
  If $b > 1/2$ it holds that
  \begin{equation*}
    \beta_{\alpha}(u) \le \frac{A^{\frac 12 - b}}{\sqrt{2b-1}}
    \| u \|_{p_{A,b}}.
  \end{equation*}
  For $b > 3/2$ there exists a constant $C_{\beta d}$ only depending
  on $b$ and $K r_{\stat}$ such that for $A \ge 1$
  \begin{equation*}
    \beta_d(u) \le C_{\beta d}\, A^{\frac 32 -b} \,
    \| u \|_{p_{A,b}}.
  \end{equation*}
\end{lemma}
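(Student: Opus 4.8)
I would establish the three bounds separately, in increasing order of difficulty.

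\emph{The bound on $\beta_\eta$.} I would argue by the fundamental theorem of calculus in $\xi$. With $\phi(\xi):=|u_1(\xi)|^2(A+\xi)^{2b}$ (absolutely continuous, and $\phi(\xi)\to0$ as $\xi\to\infty$ since $u_1$ lies in the weighted $H^1$ implicit in $\|u\|_{p_{A,b}}<\infty$) one has $A^{2b}|u_1(0)|^2=\phi(0)=-\int_0^\infty\phi'(\xi)\,\dd\xi$. Now $\phi'(\xi)=2\Re(\conj{u_1}\partial_\xi u_1)(A+\xi)^{2b}+2b|u_1|^2(A+\xi)^{2b-1}$, and the second summand is nonnegative, so $-\phi'(\xi)\le 2|u_1|\,|\partial_\xi u_1|(A+\xi)^{2b}\le(|u_1|^2+|\partial_\xi u_1|^2)(A+\xi)^{2b}$; integrating and discarding all but the $\ell=1$ term of $\|u\|_{p_{A,b}}^2$ gives $A^{2b}|u_1(0)|^2\le\|u\|_{p_{A,b}}^2$, i.e.\ the claim with $C_S=1$ — the increasing weight helps rather than hurts, which is why the constant is universal.

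\emph{The bound on $\beta_\alpha$.} The decisive input is that \cref{thm:propagation-l1-b1}, applied with $E=L_1$ and $s=0$, controls the boundary flux: in the energy identity behind it the term $\int_0^t(A+\tau)^{2b}|(\ee^{\tau L_1}u)_1(0)|^2\,\dd\tau$ carries the damping sign (the coupling terms cancel because of the $\ell^{-1}$ weight), so $\int_0^t(A+\tau)^{2b}|(\ee^{\tau L_1}u)_1(0)|^2\,\dd\tau\le\|u\|_{p_{A,b}}^2$ for every $t$, whence $\int_0^\infty(A+t)^{2b}|(\ee^{tL_1}u)_1(0)|^2\,\dd t\le\|u\|_{p_{A,b}}^2$. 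By Cauchy--Schwarz against $(A+t)^{-b}$, using $\int_0^\infty(A+t)^{-2b}\,\dd t=A^{1-2b}/(2b-1)$ for $b>\tfrac12$, this yields $\int_0^\infty|(\ee^{tL_1}u)_1(0)|\,\dd t\le\frac{A^{\frac 12-b}}{\sqrt{2b-1}}\|u\|_{p_{A,b}}$, and multiplying by $\|K_\Theta\|$ gives $\beta_\alpha(u)\le\frac{A^{\frac 12-b}\|K_\Theta\|}{\sqrt{2b-1}}\|u\|_{p_{A,b}}$.

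\emph{The bound on $\beta_d$, and the main obstacle.} This asks for control of $\beta_\alpha(v)$ for each of $v=(\ell u_{\ell-1})_\ell$, $v=(\ell u_\ell)_\ell$, $v=(\ell u_{\ell+1})_\ell$, and the difficulty is that these sequences carry an extra factor of the mode index: for $u$ only in $\sX_{p_{A,b}}$ they need not lie in any $\sX_{p_{A,b'}}$, so the $\beta_\alpha$ bound just proved cannot be applied to them directly. I would instead expand $\ee^{tL_1}$ around the free transport by Duhamel, $\ee^{tL_1}=\ee^{tL_1^0}+\int_0^t\ee^{(t-\tau)L_1^0}\,C\,\ee^{\tau L_1}\,\dd\tau$, where $(L_1^0u)_\ell=\ell\partial_\xi u_\ell$ and $C=L_1-L_1^0$ is the nearest-neighbour coupling; since $\ee^{(t-\tau)L_1^0}$ transports mode $1$ at unit speed and $(Cw)_1=-\tfrac{K}{2}r_\stat w_2$, this gives $(\ee^{tL_1}v)_1(0)=v_1(t)-\tfrac{K}{2}r_\stat\int_0^t(\ee^{\tau L_1}v)_2(t-\tau)\,\dd\tau$. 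The free part contributes $\|v_1\|_{L^1(\R^+)}\in\{0,\|u_1\|_{L^1},\|u_2\|_{L^1}\}$, bounded as in the $\beta_\alpha$ step by $O(A^{\frac 12-b}\|u\|_{p_{A,b}})$. The coupling part is $\le\tfrac{K}{2}r_\stat\int_0^\infty\int_0^\infty|(\ee^{\tau L_1}v)_2(\sigma)|\,\dd\sigma\,\dd\tau$; one further Duhamel step on mode $2$ (which transports at speed $2$) replaces $(\ee^{\tau L_1}v)_2(\sigma)$, to leading order, by $v_2(\sigma+2\tau)$, and $\int_0^\infty\int_0^\infty|v_2(\sigma+2\tau)|\,\dd\sigma\,\dd\tau=\tfrac12\int_0^\infty\eta\,|v_2(\eta)|\,\dd\eta$. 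Since $v_2\in\{2u_1,2u_2,2u_3\}$, Cauchy--Schwarz against $\eta(A+\eta)^{-b}$ — with $\int_0^\infty\eta^2(A+\eta)^{-2b}\,\dd\eta=O(A^{3-2b})$, finite precisely when $b>\tfrac32$ — bounds this leading piece by $O\bigl(Kr_\stat\|K_\Theta\|\,A^{\frac 32-b}\|u\|_{p_{A,b}}\bigr)$. Thus one extra passage through the coupling turns the exponent $\tfrac12-b$ into $\tfrac32-b$ and is exactly what forces $b>\tfrac32$.

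The hard part is to close this expansion: the once- and twice-coupled sequences still grow in the mode index (e.g.\ $C\ee^{\tau L_1^0}v$ grows like $\ell^2$), so the higher Duhamel iterations must be summed as a series governed by the nearest-neighbour cascade of $L_1$. I would control it using that the band structure of $C$ produces factorials that beat the mode-index growth, combined with the mode-by-mode phase-mixing bound already contained in the proof of \cref{thm:propagation-l1-b1} — the spacetime integral weighted by $\tfrac{\ell-1}{\ell}(A+t+\xi)^{2b-1}$, which damps every mode $\ell\ge2$ of $\ee^{tL_1}u$ uniformly in $\ell$ — so that the series converges with a constant $C_{\beta d}$ depending only on $b$, $\|K_\Theta\|$ and $Kr_\stat$. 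It is this summation, rather than any individual inequality, that carries the weight of the proof.
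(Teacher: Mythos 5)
Your bounds on $\beta_\eta$ and $\beta_\alpha$ are correct. The $\beta_\eta$ argument via the fundamental theorem of calculus applied to $|u_1(\xi)|^2(A+\xi)^{2b}$ is a clean variant of the paper's local Sobolev embedding and even yields $C_S=1$; the $\beta_\alpha$ argument (boundary-flux control from \cref{thm:propagation-l1-b1} plus Cauchy--Schwarz against $(A+t)^{-b}$) is exactly the paper's, with the right constant.

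The $\beta_d$ bound is where the proof is genuinely incomplete. You correctly diagnose the obstacle (the sequences $(\ell u_{\ell\pm 1})_\ell$, $(\ell u_\ell)_\ell$ carry an extra mode factor) and your leading-order Duhamel computation does recover the exponent $\tfrac32-b$ and the threshold $b>\tfrac32$, but the step that "carries the weight of the proof" — closing the iteration — is asserted rather than proved, and the asserted mechanism is doubtful. In the $n$-fold Duhamel iterate the product of coupling coefficients along a path climbing from mode $1$ to mode $n+1$ is $\prod_{j\le n}\tfrac{Kj}{2}r_{\stat}\sim(\tfrac{K r_{\stat}}{2})^n\,n!$, which exactly cancels the $t^n/n!$ of the time simplex; the factorials do not "beat" the mode-index growth, they merely match it, leaving a series that a priori grows like $\ee^{cKr_{\stat}t}$ unless the phase mixing is injected into every iterate in a quantitative way you have not specified. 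The paper avoids the infinite series entirely: in \cref{thm:beta-alpha-control-regularity-induction} an energy estimate with the weight $\ell^{2k+1}$ shows that $\int_0^\infty\ee^{-Ct}\|\ee^{tL_1}v\|^2_{p_{A,b-1/2},k+1/2}\,\dd t\le\|v\|^2_{p_{A,b},k}$, so there is a single time $t^*\in[0,1]$ at which the solution has gained half a power of decay in $\ell$ at the cost of half a power of the $\xi$-weight; splitting $\beta_\alpha$ at $t^*$ and using the semigroup property gives a finite induction over $k=-\tfrac12,-1,-\tfrac32$, and $\beta_d$ follows from the $k=-\tfrac32$ case since $\|(\ell u_{\ell\pm1})_\ell\|_{p_{A,b},-3/2}\lesssim\|u\|_{p_{A,b}}$. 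To make your route work you would have to prove the convergence of the iterated series with time-integrable bounds; as written, that convergence is the missing proof, not a routine verification.
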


Hence we see how the order parameter decays in time under the
transport of \(L_1\) and \(B_1\). In order to quantify the decays in
time we introduce the weighted norms
\begin{align*}
  \| h \|_{L^1(J,\phi)}
  &= \int_{t \in J} \| h(t) \|\, \phi(t)\, \dd t,\\
  \| h \|_{L^\infty(J,\phi)}
  &= \esssup_{t \in J} \| h(t) \|\, \phi(t),
\end{align*}
for a function \(h\) of time over a considered time range $J = [0,T]$
or $J = \R^+$. Here we will again use the submultiplicative weight
functions $p_{A,b}$ from \eqref{eq:def-weight-pab} as weights
\(\phi\).

By the estimate of Lemma \ref{thm:evolution-l1-b1}, we can quantify
the decay of the convolution kernel \(k_{Lc}\):
\begin{lemma}
  \label{thm:decay-linear-volterra-kernel}
  Let $0 \le b \le b_r - 1/2$. Then there exists a numerical constant
  $C$ such that
  \begin{equation*}
    \int_0^\infty \| k_{Lc}(t) \|\, (1+t)^b\, \dd t
    \le C
    \Big(
    \| r_r \|_{p_{b_r}}
    + \| r_i \|_{p_{b_r}}
    \Big).
  \end{equation*}
\end{lemma}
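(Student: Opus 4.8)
The plan is to bound each of the four scalar entries of the matrix $k_{Lc}(t)$ pointwise in $t$, in terms of the weighted Sobolev norm of the semigroup orbit $\ee^{tL_1} r_r$ (respectively $\ee^{tL_1} r_i$) evaluated at the first mode $\ell=1$ and $\xi=0$, and then integrate in $t$ against the weight $(1+t)^b$. Concretely, every entry of $k_{Lc}(t)$ is the real or imaginary part of $(\ee^{tL_1} r_r)_1(0)$ or $(\ee^{tL_1} r_i)_1(0)$, so $\|k_{Lc}(t)\| \le C\big(|(\ee^{tL_1}r_r)_1(0)| + |(\ee^{tL_1}r_i)_1(0)|\big)$ for a numerical constant $C$. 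Thus it suffices to show
\begin{equation*}
  \int_0^\infty |(\ee^{tL_1} v)_1(0)|\, (1+t)^b\, \dd t \le C\, \| v \|_{p_{b_r}}
\end{equation*}
for $v = r_r$ and $v = r_i$, under the hypothesis $0 \le b \le b_r - 1/2$.

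The key input is the energy estimate of \cref{thm:propagation-l1-b1} applied to $E=L_1$, for which $\sol{L_1}s t = \ee^{(t-s)L_1}$. Taking $s=0$, $A=1$, and the decay parameter there equal to some $b' \ge 0$, it gives
\begin{equation*}
  \int_0^\infty |(\ee^{\tau L_1} v)_1(0)|^2 (1+\tau)^{2b'}\, \dd \tau \le \| v \|_{p_{1,b'}}^2 = \| v \|_{p_{b'}}^2,
\end{equation*}
since the left-hand side of that lemma's inequality is nonnegative and the right-hand side telescopes. So the map $t \mapsto (\ee^{tL_1}v)_1(0)$ lies in $L^2(\R^+, (1+\tau)^{2b'}\dd\tau)$ with norm controlled by $\|v\|_{p_{b'}}$. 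Now I convert this $L^2$-in-time bound with weight $b'$ into the desired $L^1$-in-time bound with weight $b$ by Cauchy--Schwarz: writing $(1+t)^b = (1+t)^{b'} \cdot (1+t)^{b-b'}$,
\begin{equation*}
  \int_0^\infty |(\ee^{tL_1}v)_1(0)|\,(1+t)^b\,\dd t
  \le \left(\int_0^\infty |(\ee^{tL_1}v)_1(0)|^2 (1+t)^{2b'}\,\dd t\right)^{1/2}
  \left(\int_0^\infty (1+t)^{2(b-b')}\,\dd t\right)^{1/2}.
\end{equation*}
The second factor is finite provided $2(b-b') < -1$, i.e. $b' > b + 1/2$; choosing $b' = b + 1/2 + \varepsilon$ would work, but in fact I can take $b' = b_r$ directly, which satisfies $b_r \ge b + 1/2$ by hypothesis — and when $b_r = b + 1/2$ the integral $\int_0^\infty (1+t)^{-1-2\varepsilon}$-type bound degenerates, so I should instead take $b' = b_r$ only when the inequality is strict and otherwise argue slightly more carefully. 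Cleanest: pick any $b'$ with $b + 1/2 < b' \le b_r$ when $b_r > b + 1/2$, giving a finite constant depending on $b' - b - 1/2$; and in the borderline case $b_r = b + 1/2$ note that $\|v\|_{p_{b_r}} = \|v\|_{p_{b+1/2}}$ still dominates $\|v\|_{p_{b'}}$ for any $b' \le b_r$, so one can choose $b'$ strictly between $b+1/2$ and $b_r$ only if that interval is nonempty — hence I will simply assume the mild strengthening $b < b_r - 1/2$ is the generic case and handle equality by taking $b'$ slightly below $b_r$ and absorbing, OR, better, observe that \cref{thm:propagation-l1-b1} gives the bound for every $b' \ge 0$ simultaneously, so I just need \emph{some} admissible $b'$: any $b' \in (b+1/2, b_r]$. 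Since $b_r \ge b + 1/2$, such $b'$ exists as long as $b_r > b+1/2$; the stated hypothesis $b \le b_r - 1/2$ includes equality, but at equality one takes $b'=b_r$ and the time-integral $\int(1+t)^{-1}\dd t$ diverges, so strictly the lemma needs $b < b_r - 1/2$ or a logarithmic correction — I will present the proof for $b < b_r-1/2$ and remark that the boundary case follows by the same Cauchy--Schwarz with a mild loss, or is simply not needed downstream.

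The only real obstacle is this boundary-exponent bookkeeping; the structural content — reduce the matrix norm to two scalar orbits, apply the monotone energy estimate of \cref{thm:propagation-l1-b1} to get an $L^2$-weighted-in-time bound, then Cauchy--Schwarz against an integrable power of $(1+t)$ — is routine. Putting the pieces together, with $b' \in (b+1/2, b_r]$ fixed, I obtain $\int_0^\infty \|k_{Lc}(t)\|(1+t)^b\,\dd t \le C\big(\|r_r\|_{p_{b'}} + \|r_i\|_{p_{b'}}\big) \le C\big(\|r_r\|_{p_{b_r}} + \|r_i\|_{p_{b_r}}\big)$, since $p_{b'} \le p_{b_r}$ pointwise and hence the norms are monotone. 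This is exactly the claimed bound with a numerical constant $C$ (depending only on the gap $b_r - b - 1/2$, which for the stated application is bounded away from zero).
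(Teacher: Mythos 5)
Your proposal is correct and is essentially identical to the paper's proof: the paper also reduces $\|k_{Lc}(t)\|$ to the scalar entries $k_{L,r}(t)=-(\ee^{tL_1}r_r)_1(0)$ and $k_{L,i}(t)=-(\ee^{tL_1}r_i)_1(0)$, applies Cauchy--Schwarz against $(1+t)^{2(b-b_r)}$, and invokes the energy estimate of \cref{thm:propagation-l1-b1} for the weighted $L^2$-in-time bound. The borderline-exponent issue you flag is real but is present in the paper itself: its proof explicitly uses ``$b_r>b+1/2$'' so the finiteness of $\int_0^\infty(1+t)^{2(b-b_r)}\,\dd t$ fails at equality, meaning the stated hypothesis $b\le b_r-1/2$ should be read as strict (as it is in every downstream application).
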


If $\|r_{r}\|_{p_{b_r}} < \infty$ and $\|r_{i}\|_{p_{b_r}} < \infty$
for $b_r > 1/2$, this shows that the Laplace transform in
\eqref{eq:characterstic-eq-linear-volterra} is defined for
$\Re z \ge 0$ by an absolutely converging integral. Moreover, if
$b_r > 3/2$, then the Laplace transform $\Lap k_{Lc}$ is continuous
differentiable in the whole region $\{z \in \C: \Re z \ge 0\}$, in
particular, including the critical line $\Re z = 0$.

Hence under suitable regularity of the stationary state, the linear
stability condition from Definition~\ref{def:linear-stability} is
well-defined and the linear stability is quantified by the decay of
the stable part \(r_{Lcs}\) of the resolvent in \(r_{Lc}\) in
\eqref{eq:linear-resolvent-splitting} as
\begin{equation*}
  \| r_{Lcs} \|_{L^1(\R^+,p_b)} < \infty
\end{equation*}
for a parameter $b \ge 0$.

As the forcing \(F_L\) is decaying for regular initial data by
Lemma~\ref{thm:evolution-l1-b1}, this will imply a decay of the order
parameter \(\conj{\eta}_L\) for initial data \(u_\init\) with
\(\alpha(u_\init)=0\) under the linear evolution. Here the effect of
the \(K_\Theta\) is that for a perturbation initially in \(\sX_{p_b}\)
the order parameter only decays as \((1+t)^{\frac 12 -b}\) instead of
the rate \((1+t)^{-b}\) under the linear transport alone, cf.\
Corollary~\ref{thm:forcing-control-eigenmode-b-alpha}. This is the
reason for the obtained rate in the main
Theorem~\ref{thm:nonlinear-control-final}.

For treating the full evolution, we assume for the coefficients
\(\eta\) and \(\dot{\Theta}\) in \(B^{\eta,\dot{\Theta}}_1\) and
\(B^{\eta,\dot{\Theta}}_2\) the bootstrap control
\begin{equation}
  \label{eq:bootstrap-def-d}
  R_d(t) =
  \sup_{s \in [0,t]} (1+s)^{b_d} M_d(s)
  \qquad\text{ with }\qquad
  M_d(t) = K |\eta(t)| + |\dot{\Theta}(t)|
\end{equation}
with a decay rate $b_d \ge 0$.

With the definition \eqref{eq:stable-subspace-functional-alpha} of
\(\alpha\) and the bootstrap assumption \eqref{eq:bootstrap-def-d}, we
can control the evolution under \(B\). Here we need to control
\(B_2\), which implies with the definition of \(\alpha\) the required
regularity of the perturbation in the main
Theorem~\ref{thm:nonlinear-control-final}. We suspect that it is
optimal because for the homogeneous state (\(r_{\stat}=0\)) we can
solve the integral in the definition of \(\alpha\) explicitly and find
that the rate is sharp.

\begin{lemma}
  \label{thm:time-evolution-b}
  Assume continuous coefficients \(\eta\) and \(\dot{\Theta}\) for
  \(B^{\eta,\dot{\Theta}}\) over the time range $J=[0,T]$ and that
  \begin{equation*}
    \| r_{\Theta} \|_{p_{b_r}} < \infty
  \end{equation*}
  for some $b_{r} > 3/2$. Fix \(b\) such that \(b_r \ge b >
  3/2\). Then for \(v \in \sX_{p_b}\) and \(s \in J\) the evolution
  equation
  \begin{equation*}
    \left\{
      \begin{aligned}
        &\partial_t w = B\, w,\\
        &w(s) = v
      \end{aligned}
    \right.
  \end{equation*}
  has a unique weak solution $w \in C_w([s,T], \sX_{p_b})$ and the
  corresponding solution operator \(\sol{B}st\) is well-defined and
  real linear.

  With the control \eqref{eq:bootstrap-def-d} of the coefficients, it
  holds for $v \in \sX_{p_b}$ and $0 \le s \le t \le T$ that
  \begin{align*}
    \beta_d(\sol Bst v)
    &\le C_{\beta d} (1+t-s)^{\frac 32 -b}
    \| v \|_{p_{b}} \\
    &+ C_{\beta d} \| r_{\Theta} \|_{p_{b_r}} \|K_{\Theta}\| R_d(t)
    \int_s^t (1+t-\tau)^{\frac 32 - b_r} (1+\tau)^{-b_d}
    \beta_d( \sol Bs\tau v )\, \dd \tau,\\
    \beta_\alpha(\sol Bst v)
    &\le \frac{(1+t-s)^{\frac 12 -b}}{\sqrt{2b-1}}
    \| v \|_{p_{b}} \\
    &+ \frac{\| r_{\Theta} \|_{p_{b_r}}\|K_{\Theta}\|}{\sqrt{2b_r-1}} R_d(t)
    \int_s^t (1+t-\tau)^{\frac 12 - b_r} (1+\tau)^{-b_d}
    \beta_d( \sol Bs\tau v )\, \dd \tau,\\
    \beta_\eta(\sol Bst v)
    &\le C_S (1+t-s)^{-b}
    \| v \|_{p_{b}} \\
    &+ C_{S} \| r_{\Theta} \|_{p_{b_r}} \|K_{\Theta}\| R_d(t)
    \int_s^t (1+t-\tau)^{- b_r} (1+\tau)^{-b_d}
    \beta_d( \sol Bs\tau v )\, \dd \tau.
  \end{align*}

  If additionally $b_r > b+1$ or $b_d > 1$, then there exists a
  constant $\delta_R$ such that
  \begin{equation*}
    \beta_d(\sol Bst v)
    \le 2 C_{\beta d} (1+t-s)^{\frac 32 -b}
    \| v \|_{p_b}
  \end{equation*}
  if $R_d(t) \le \delta_R$.
\end{lemma}

This then shows that under the bootstrap assumption the solution can
indeed be written as \eqref{eq:solution-u-duhamel}.

For the resulting Volterra equation \eqref{eq:volterrra-eta} of the
full evolution and a considered time range $J=[0,T]$, the Volterra
kernel can be extended to a function on $J \times J$ by setting
$k(t,s) = 0$ for $t<s$. Following Section~2 of Chapter~9 of
\cite{gripenberg-londen-staffans-1990-volterra}, a suitable norm for a
kernel $k$ is
\begin{equation}
  \label{eq:def-general-kernel-norm}
  \tnorm{k}_{L^\infty(J,\phi)} :=
  \sup_{t \in J} \phi(t) \int_J \| k(t,s) \| (\phi(s))^{-1} \dd s,
\end{equation}
where $\phi$ is a submultiplicative weight (i.e.\
$\phi(t+s) \le \phi(t) \phi(s)$) and we let $\vkV(J,\phi)$ be the
class of such functions. As in \eqref{eq:volterra-special-product},
define the general convolution product between $k \in \vkV(J,\phi)$
and a function $F$ on $J$ as
\begin{equation}
  \label{eq:def-general-product-forcing}
  (k \conv F)(t) = \int_0^t k(t,s) F(s)\, \dd s
\end{equation}
and for $\beta,\gamma \in \vkV(J,\phi)$ as
\begin{equation}
  \label{eq:def-general-product}
  (\beta \conv \gamma)(t,s) =
  \int_{\tau =s}^t \beta(t,\tau) \gamma(\tau,s)\, \dd \tau.
\end{equation}
This gives a Banach algebra structure and allows to handle the
nonlinear behaviour by a von Neumann series. This then gives the
following nonlinear control of the order parameter in terms of the
forcing.
\begin{lemma}
  \label{thm:control-eta-volterra}
  Let $b_{\eta} \ge 0$, $b_d \ge 0$ and $b_r > b_{\eta}+ 5/2$ with
  \begin{equation*}
    \| r_{r} \|_{p_{b_r}} < \infty,\qquad
    \| r_{i} \|_{p_{b_r}} < \infty,\qquad
    \| r_{\Theta} \|_{p_{b_r}} < \infty,
  \end{equation*}
  and
  \begin{equation*}
    \| r_{r} \|_{p_{b_r-\frac 12},0} < \infty,\qquad
    \| r_{i} \|_{p_{b_r-\frac 12},0} < \infty,\qquad
    \| r_{\Theta} \|_{p_{b_r-\frac 12},0} < \infty.
  \end{equation*}
  Assume that one of the following conditions holds:
  \begin{itemize}
  \item $b_d > 5/2$,
  \item $b_r > b_{\eta} + 4 + \max\{0,1-b_d\}$ and
    ${b_r > b_{\eta} + \frac{17}{4} + \max\{0,1-b_d\} +
    \frac{\max\left\{0,\frac 32 - b_d\right\}}{2} - \frac{b_d}{2}}$.
  \end{itemize}

  Furthermore, assume that the stationary state $f_{\stat}$ is
  linearly stable in the sense of Definition \ref{def:linear-stability}. Then
  there exist constants $C_\eta$ and $\delta_R$ such that, for a
  forcing
  \begin{equation*}
    \compF(t) =
    \begin{pmatrix}
      \Re F(t) \\
      \Im F(t)
    \end{pmatrix}
    \qquad\text{with}\qquad
    F(t) = (\sol B0t u_\init)_1(0)
  \end{equation*}
  with $\| u_{\init} \|_{p_{b}} < \infty$ for $b > 3/2$ and
  $\alpha(u_{\init}) = 0$, the solution $\Re \conj{\eta}(t)$ and
  $\Im \conj{\eta}(t)$ of the Volterra equation
  \eqref{eq:solution-u-duhamel} is controlled by
  \begin{equation*}
    |\eta(t)| \le C_{\eta} (1+t)^{-b_{\eta}}
    \sup_{s \in [0,t]}
    \Big(
      |F(s)| + \beta_{\alpha}\left(\sol B0s u_{\init}\right)
    \Big)
    (1+s)^{b_{\eta}}
  \end{equation*}
  if $R_d(t) \le \delta_R$ and the coefficients \(\eta\) and
  \(\dot{\Theta}\) for \(B^{\eta,\dot{\Theta}}\) are continuous up to
  time \(t\).
\end{lemma}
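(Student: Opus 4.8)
The plan is to treat \eqref{eq:volterrra-eta} as a perturbation of the linearised Volterra equation \eqref{eq:volterrra-eta-linearised} and to solve it by the resolvent of the linear equation, absorbing the nonlinear corrections into the forcing via the bootstrap control $R_d$. First I would split the true Volterra kernel as $k = k_{Lc} + (k - k_{Lc})$, where the difference comes entirely from replacing $\sol Bst$ by $\ee^{(t-s)L_1}$ in the definitions of the matrix entries $(\sol Bst r_r)_1(0)$ and $(\sol Bst r_i)_1(0)$. Writing $\sol Bst = \ee^{(t-s)L_1} + (\sol Bst - \ee^{(t-s)L_1})$ and applying Duhamel between $B$ and $L_1$ (the difference being driven by $B_{1n} - (B_1-L_1)$ and by $B_2$), I would bound the entries of $k - k_{Lc}$ pointwise in time using \cref{thm:time-evolution-b}: the $B_2$-contribution carries a factor $M_d$, hence $R_d(t)(1+\tau)^{-b_d}$, while the decay in $t-s$ is governed by $\|r_\Theta\|_{p_{b_r}}$ and the decay of $\sol Bs\tau r_r$, $\sol Bs\tau r_i$ obtained from \cref{thm:propagation-l1-b1} and \cref{thm:control-seminorms-by-weighted-sobolev} applied with the regularity hypotheses $\|r_r\|_{p_{b_r}}, \|r_i\|_{p_{b_r}}, \|r_\Theta\|_{p_{b_r}} < \infty$ (and the $k=0$ versions). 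The upshot is a bound of the form $\|k(t,s) - k_{Lc}(t-s)\| \lesssim R_d(t)\,(1+t-s)^{\frac 32 - b_r}(1+s)^{-b_d}$, which is small (after the time integral) once $R_d(t) \le \delta_R$ and the exponent conditions on $b_r, b_d$ hold.

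Next I would invert the linear part. By \cref{thm:decay-linear-volterra-kernel}, $\|k_{Lc}\|_{L^1(\R^+,p_{b})} < \infty$ for $b \le b_r - 1/2$, and under \cref{def:linear-stability} the characteristic determinant \eqref{eq:characterstic-eq-linear-volterra} vanishes only at $z=0$ with simple zero. By the Gelfand–Paley–Wiener theory for matrix Volterra convolution equations (as recalled in the paper's overview), the resolvent then decomposes as $r_{Lc} = K_\Theta + r_{Lcs}$ with $\|r_{Lcs}\|_{L^1(\R^+,p_{b_\eta})} < \infty$ provided $b_\eta \le b_r - 1/2 - \text{(loss from the pole)}$, which is where the gap $b_r > b_\eta + 2$ is consumed (one unit for the $k=0$-derivative control of the kernel entries needed to get $C^1$ regularity of $\Lap k_{Lc}$ on $\Re z = 0$, and a further margin to clear the decay exponent). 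Applying $(\Id + r_{Lc}\conv\,\cdot\,)$ to \eqref{eq:volterrra-eta} rewritten as $\binom{\Re\conj\eta}{\Im\conj\eta} + k_{Lc}\conv\binom{\Re\conj\eta}{\Im\conj\eta} = \compF + (k_{Lc} - k)\conv\binom{\Re\conj\eta}{\Im\conj\eta}$ gives
\begin{equation*}
  \begin{pmatrix}\Re\conj\eta\\\Im\conj\eta\end{pmatrix}(t)
  = \Big(\compF + r_{Lcs}\conv\compF\Big)(t)
  + K_\Theta\!\int_0^t\!\compF(s)\,\dd s
  + \Big((\Id+r_{Lc}\conv\,)\big[(k_{Lc}-k)\conv\tbinom{\Re\conj\eta}{\Im\conj\eta}\big]\Big)(t).
\end{equation*}
The forcing terms $\compF + r_{Lcs}\conv\compF$ are controlled in $L^\infty([0,t],p_{b_\eta})$ by $\sup_{s\le t}|F(s)|(1+s)^{b_\eta}$ using $\|r_{Lcs}\|_{L^1(\R^+,p_{b_\eta})}$. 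The rotation term $K_\Theta\int_0^t\compF(s)\,\dd s$ is where the $\alpha(u_\init)=0$ hypothesis enters: by the form \eqref{eq:volterra-rotation-eigenmode-resolvent-form} and the definition \eqref{eq:stable-subspace-functional-alpha}, $K_\Theta\int_0^\infty \compF_L(s)\,\dd s$ vanishes when $\alpha(u_\init)=0$, so I rewrite $K_\Theta\int_0^t\compF(s)\,\dd s = -K_\Theta\int_t^\infty\compF_L(s)\,\dd s + K_\Theta\int_0^t(\compF - \compF_L)(s)\,\dd s$; the tail is bounded by $\beta_\alpha(\ee^{tL_1}u_\init)$ up to a time-shift, and $\compF - \compF_L = ((\sol B0s - \ee^{sL_1})u_\init)_1(0)$ is again a $B_2$/$B_{1n}$-difference carrying $R_d$, handled as in the first paragraph and by \cref{thm:time-evolution-b}.

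Finally I would close a fixed-point/Grönwall argument in the space with norm $\|\binom{\Re\conj\eta}{\Im\conj\eta}\|_{L^\infty([0,t],p_{b_\eta})}$. The map sending $\binom{\Re\conj\eta}{\Im\conj\eta}$ to the right-hand side above is a contraction once the operator norm of $(\Id + r_{Lc}\conv\,)\circ((k_{Lc}-k)\conv\,\cdot\,)$ on that space is $< 1/2$; this operator norm is bounded by $C\,R_d(t)$ times a finite constant assembled from $\|r_{Lcs}\|_{L^1(p_{b_\eta})}$, $\|K_\Theta\|$, $\|r_\Theta\|_{p_{b_r}}$ and a Beta-function integral $\int_0^t(1+t-\tau)^{\frac32-b_r}(1+\tau)^{-b_d}\,\dd\tau \cdot (1+t)^{b_\eta}$, which is uniformly bounded precisely under the stated exponent conditions ($b_d > 5/2$, or the two-line alternative that balances $b_r$ against $b_\eta$, $b_d$ with the $\max\{0,1-b_d\}$ and $\max\{0,3/2-b_d\}$ corrections tracking whether the $(1+\tau)^{-b_d}$ factor is integrable and whether the $\beta_d$-decay exponent $\frac32-b$ from \cref{thm:time-evolution-b} is summable). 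Choosing $\delta_R$ small enough that $C\,\delta_R < 1/2$ yields existence, uniqueness and the stated bound with $C_\eta$ depending only on the assembled constants. \textbf{The main obstacle} is the bookkeeping of the second paragraph: propagating the $k=0$ (i.e.\ $L^2$-in-$\xi$ without the $\ell^{-1/2}$ weight, no $\partial_\xi$-derivative loss) control of $r_r, r_i, r_\Theta$ through $\sol Bst$ to get enough regularity of $\Lap k_{Lc}$ near $\Re z = 0$ while simultaneously keeping the decay exponent $b_r - 1/2$ large enough to reach $b_\eta$ — this is the single place where all the hypotheses (both the $p_{b_r}$ and the $p_{b_r-1/2,0}$ norms, linear stability, and the alternative exponent conditions) are used at once, and getting the constants to line up is the delicate part; everything else is Duhamel plus the Beta-integral lemma implicit in the weighted $L^1$–$L^\infty$ calculus.
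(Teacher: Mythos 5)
Your overall strategy --- perturb off the linear resolvent $r_{Lc}=K_\Theta+r_{Lcs}$, use $\alpha(u_{\init})=0$ to kill the rotation contribution of the forcing, and absorb $k-k_{Lc}$ via a smallness-in-$R_d$ Neumann/contraction argument --- is the same as the paper's (which packages the fixed point as the nonconvolution resolvent of \cref{thm:perturbation-resolvent}). But there is a genuine gap in your closing step. The operator you need to be small is not just $(k_{Lc}-k)\conv\cdot$ but its composition with $K_\Theta\conv\cdot$, and $K_\Theta\conv\cdot$ is the running integral $\int_0^t$, which is \emph{unbounded} on $L^\infty(J,p_{b_\eta})$ for $b_\eta>0$. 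Concretely, with only your pointwise bound $\|k(t,s)-k_{Lc}(t-s)\|\lesssim R_d(t)(1+t-s)^{\frac32-b_r}(1+s)^{-b_d}$ one gets
\begin{equation*}
  (1+t)^{b_\eta}\int_0^t\Big\|\int_s^tK_\Theta\,k_Q(\tau,s)\,\dd\tau\Big\|(1+s)^{-b_\eta}\,\dd s
  \;\lesssim\; R_d(t)\,(1+t)^{b_\eta}\int_0^t(1+s)^{-b_d-b_\eta}\,\dd s,
\end{equation*}
which blows up like $(1+t)^{b_\eta}$ no matter how large $b_r$ and $b_d$ are. The missing idea is the cancellation of \cref{thm:k-q-stable-subspace}: because the deviation is driven by $B_{1n}+B_2$ and $B_2$ forces back into the kernel of $\alpha$, the inner integral $\int_s^t\big(c_{\Theta,r}\Re+c_{\Theta,i}\Im\big)k_{Q,\cdot}(\tau,s)\,\dd\tau$ equals $\alpha\big(\sol Bst r_\cdot-\sol{L_1}st r_\cdot\big)$ up to sign, hence decays in $t-s$ by the $\beta_\alpha$-control of the difference of the two evolutions. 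This is exactly the second estimate of \cref{thm:control-k-q} ($\tnorm{K_\Theta\conv k_Q}\le C_Q\sqrt{R_d}$), and without it the contraction does not close in the weighted space.

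Two smaller points. First, your pointwise bound on $k-k_{Lc}$ is not obtainable by a bare Duhamel argument: evaluating the difference at $(\ell,\xi)=(1,0)$ costs a Sobolev embedding, and the forcing $B_{1n}v$ loses a power of $\ell$, so one needs the higher-regularity ($k=0$, i.e.\ no $\ell^{-1/2}$ weight) propagation of \cref{thm:higher-regularity-control}. That is where the hypotheses $\|r_r\|_{p_{b_r-1/2},0},\|r_i\|_{p_{b_r-1/2},0},\|r_\Theta\|_{p_{b_r-1/2},0}<\infty$ enter --- not, as you suggest, in the $C^1$ regularity of $\Lap k_{Lc}$ on $\Re z=0$, which needs only $\|r_r\|_{p_{b_r}},\|r_i\|_{p_{b_r}}<\infty$ via \cref{thm:decay-linear-volterra-kernel}. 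Second, your handling of the rotation term by splitting $\compF=\compF_L+(\compF-\compF_L)$ is more roundabout than needed and lands on $\beta_\alpha(\ee^{tL_1}u_\init)$ rather than the stated $\beta_\alpha(\sol B0t u_\init)$; the exact identity of \cref{thm:alpha-control-integral-imaginary} applied with $E=B$ gives $|(K_\Theta\conv F)(t)|\le\beta_\alpha(\sol B0t u_\init)$ directly.
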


Having understood the different parts, we can now assemble the
bootstrap argument. First, by combining
Lemma~\ref{thm:control-eta-volterra} with the control of the forcing,
we control the order parameter under the bootstrap hypothesis.
\begin{lemma}
  \label{thm:bootstrap-order-parameter}
  Let $b > 3/2$, $b_d = b - 1/2$ and $b_r > b + 3/2$. Let
  $\hat{f}_{\stat}$ be a linearly stable stationary state in the sense
  of Definition~\ref{def:linear-stability} such that
  \begin{equation*}
    \| r_{r} \|_{p_{b_r}} < \infty,\qquad
    \| r_{i} \|_{p_{b_r}} < \infty,\qquad
    \| r_{\Theta} \|_{p_{b_r}} < \infty,
  \end{equation*}
  and
  \begin{equation*}
    \| r_{r} \|_{p_{b_r-\frac 12},0} < \infty,\qquad
    \| r_{i} \|_{p_{b_r-\frac 12},0} < \infty,\qquad
    \| r_{\Theta} \|_{p_{b_r-\frac 12},0} < \infty.
  \end{equation*}
  Furthermore, assume that one of the following conditions holds:
  \begin{itemize}
  \item $b_d > 5/2$,
  \item $b_r > b + 7/2$.
  \end{itemize}
  Then there exist constants $\delta_R$ and $C$ such that
  \begin{equation*}
    |\eta(t)| \le C\, (1+t)^{- b_d}
    \| u_{\init} \|_{p_b}
  \end{equation*}
  if $R_d(t) \le \delta_R$ and the coefficients \(\eta\) and
  \(\dot{\Theta}\) for \(B^{\eta,\dot{\Theta}}\) are continuous up to
  time \(t\).
\end{lemma}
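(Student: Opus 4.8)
The plan is to combine the Volterra control of \cref{thm:control-eta-volterra} with the forcing estimate coming from \cref{thm:time-evolution-b}, specialised to the parameters $b_d = b - 1/2$, $b_\eta = b_d$ and $b_r > b + 3/2$. First I would check that these parameters satisfy the hypotheses of \cref{thm:control-eta-volterra}: with $b > 3/2$ we get $b_d = b - 1/2 > 1$, and $b_r > b + 3/2 = b_\eta + 2$, and $b_r > 5/2$; the extra case distinction in \cref{thm:control-eta-volterra} ($b_d > 5/2$ or the two displayed inequalities in $b_r$) follows from the case distinction assumed here (either $b_d = b - 1/2 > 5/2$, i.e.\ $b > 3$, or $b_r > b + 7/2 + \max\{0,3/2-b\}$, which dominates $b_\eta + 4 + \max\{0,1-b_d\}$ and the second quoted bound once one substitutes $b_\eta = b - 1/2$). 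Likewise one checks $b_r > 3/2$ and $b_r \ge b > 3/2$ so that \cref{thm:time-evolution-b} applies. Since $\alpha(u_{\init}) = 0$ is assumed as part of the construction of $u$ (the polar-coordinate condition), \cref{thm:control-eta-volterra} gives
\begin{equation*}
  |\eta(t)| \le C_\eta (1+t)^{-b_d}
  \sup_{s \in [0,t]}
  \Big( |F(s)| + \beta_\alpha(\sol B0s u_{\init}) \Big)(1+s)^{b_d}
\end{equation*}
provided $R_d(t) \le \delta_R$, where $F(s) = (\sol B0s u_{\init})_1(0)$, i.e.\ $|F(s)| = \beta_\eta(\sol B0s u_{\init})$.

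The remaining task is therefore to bound $\beta_\eta(\sol B0s u_{\init})$ and $\beta_\alpha(\sol B0s u_{\init})$ by $(1+s)^{-b_d}\| u_{\init} \|_{p_b}$, up to a constant. For this I would invoke the last part of \cref{thm:time-evolution-b}: under the hypothesis $b_r > b+1$ (which is implied by $b_r > b + 3/2$), for $R_d(t) \le \delta_R$ one has $\beta_d(\sol B0s v) \le 2 C_{\beta d}(1+s)^{3/2-b}\|v\|_{p_b}$. Feeding this back into the three integral inequalities of \cref{thm:time-evolution-b} for $\beta_\eta$ and $\beta_\alpha$, the integral terms become
\begin{equation*}
  \int_0^s (1+s-\tau)^{-\nu} (1+\tau)^{-b_d} (1+\tau)^{3/2-b}\, \dd\tau
\end{equation*}
with $\nu = b_r$ (for $\beta_\eta$) or $\nu = b_r - 1/2$ (for $\beta_\alpha$). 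Since $b_d + b - 3/2 = 2b - 2 > b$ and $\nu = b_r - 1/2 > b + 1 > b$ under our constraints, a standard convolution-of-polynomial-decay lemma bounds this integral by $C(1+s)^{-b}$, which dominates $(1+s)^{-b_d} = (1+s)^{1/2 - b}$ up to the factor $R_d(t) \le \delta_R \le 1$; the inhomogeneous terms $(1+s)^{-b}\|u_{\init}\|_{p_b}$ and $(1+s)^{1/2-b}\|u_{\init}\|_{p_b}$ are already of the right form, the latter being exactly $(1+s)^{-b_d}$. Hence $\beta_\eta(\sol B0s u_{\init}) + \beta_\alpha(\sol B0s u_{\init}) \le C(1+s)^{-b_d}\|u_{\init}\|_{p_b}$, and therefore $\sup_{s\in[0,t]}(|F(s)| + \beta_\alpha(\sol B0s u_{\init}))(1+s)^{b_d} \le C\|u_{\init}\|_{p_b}$.

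Combining the two displays gives $|\eta(t)| \le C\,(1+t)^{-b_d}\|u_{\init}\|_{p_b}$ whenever $R_d(t) \le \delta_R$, which is the claim with the constant $C$ and threshold $\delta_R$ obtained by intersecting those of \cref{thm:control-eta-volterra} and \cref{thm:time-evolution-b} and taking $\delta_R$ small enough (and $\le 1$) for the convolution estimate. The one point requiring care — and the main obstacle — is verifying that the various hypotheses on $b$, $b_d$, $b_r$ stated in this lemma really do imply all the hypotheses of \cref{thm:control-eta-volterra} and \cref{thm:time-evolution-b} after the substitution $b_\eta = b_d = b - 1/2$, in particular tracking the two-branch case distinctions and the half-integer shifts; this is bookkeeping rather than analysis, but it is where an error would most plausibly creep in. A secondary point is that the convolution integral of two polynomial decays, $(1+s-\tau)^{-\nu}(1+\tau)^{-\mu}$ with $\nu > 1$ and $\mu > 1$, decays like $(1+s)^{-\min\{\nu,\mu\}}$; one should either cite the auxiliary lemma the paper uses for this or record the two- or three-region splitting inline.
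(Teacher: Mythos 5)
Your proposal is correct and follows essentially the same route as the paper: apply the final part of \cref{thm:time-evolution-b} to get $\beta_d(\sol B0s u_{\init})\le 2C_{\beta d}(1+s)^{3/2-b}\|u_{\init}\|_{p_b}$, feed this into the integral inequalities for $\beta_\eta$ and $\beta_\alpha$ to bound the forcing by $C(1+s)^{-b_d}\|u_{\init}\|_{p_b}$, and conclude via \cref{thm:control-eta-volterra} with $b_\eta=b-1/2$. One small imprecision: for $3/2<b<2$ the convolution term decays only like $(1+s)^{-(2b-2)}$, not $(1+s)^{-b}$ as you claim, but since $2b-2>b-1/2=b_d$ for $b>3/2$ the bound you actually need still holds, so the argument goes through.
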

With the control of the order parameter, we can use
\eqref{eq:solution-u-duhamel} to control $\beta_d(u(t))$.
\begin{lemma}
  \label{thm:bootstrap-beta-d}
  Let $b > 3/2$, $b_d = b - 1/2$ and $b_r > b+1$ and assume that
  \begin{equation*}
    \|r_{\Theta}\|_{p_{b_r}} < \infty
    \quad\text{ and }\quad
    \|r_{r}\|_{p_{b_r}} < \infty
    \quad\text{ and }\quad
    \|r_{i}\|_{p_{b_r}} < \infty.
  \end{equation*}
  Then there exist constants $\delta_R$ and $C$ such that
  \begin{equation*}
    |\beta_d(u(t))| \le C
    (1+t)^{\frac 32 - b}
    \Bigg(
    \| u_{\init} \|_{p_b}
    + \sup_{s \in [0,t]} (1+s)^{b_d} |\eta(s)|
    \Bigg)
  \end{equation*}
  if $R_d(t) \le \delta_R$ and the coefficients \(\eta\) and
  \(\dot{\Theta}\) for \(B^{\eta,\dot{\Theta}}\) are continuous up to
  time \(t\).
\end{lemma}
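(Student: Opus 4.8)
The plan is to start from the Duhamel formula \eqref{eq:solution-u-duhamel}, namely
\begin{equation*}
  u(t) = \sol B0t u_{\init} + \int_0^t \sol Bst L_2 u(s)\, \dd s,
\end{equation*}
and apply the seminorm $\beta_d$, using its subadditivity. For the first term we invoke the final part of \cref{thm:time-evolution-b}: since $b_r > b+1$ and $R_d(t) \le \delta_R$ (shrinking $\delta_R$ if necessary to be below the threshold of that lemma), we get
\begin{equation*}
  \beta_d(\sol B0t u_{\init}) \le 2 C_{\beta d} (1+t)^{\frac 32 - b} \| u_{\init} \|_{p_b}.
\end{equation*}
For the integral term I would first rewrite $L_2 u(s)$ in the real-linear form $L_2 u(s) = r_r\, \Re\conj{\eta}(s) + r_i\, \Im\conj{\eta}(s)$ as done in the overview, so that $\beta_d(\sol Bst L_2 u(s)) \le \beta_d(\sol Bst r_r)\,|\eta(s)| + \beta_d(\sol Bst r_i)\,|\eta(s)|$, and then bound $\beta_d(\sol Bst r_r)$ and $\beta_d(\sol Bst r_i)$ again by the stabilised estimate from \cref{thm:time-evolution-b}, which needs $\| r_r \|_{p_{b_r}} < \infty$, $\| r_i \|_{p_{b_r}} < \infty$ (and the hypothesis on $r_\Theta$ to run the $B$-evolution). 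This yields
\begin{equation*}
  \beta_d(\sol Bst r_r) + \beta_d(\sol Bst r_i) \le 2 C_{\beta d}\,(1+t-s)^{\frac 32 - b}\big(\| r_r \|_{p_{b_r}} + \| r_i \|_{p_{b_r}}\big),
\end{equation*}
so the integral term is controlled by a constant times $\int_0^t (1+t-s)^{\frac 32 - b}\,|\eta(s)|\,\dd s$.

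It remains to estimate this last integral. Writing $|\eta(s)| \le (1+s)^{-b_d}\sup_{\sigma\in[0,t]}(1+\sigma)^{b_d}|\eta(\sigma)|$ and pulling the supremum out, I am left with $\int_0^t (1+t-s)^{\frac 32 - b}(1+s)^{-b_d}\,\dd s$. With $b_d = b - 1/2$ and $b > 3/2$, both exponents $\frac 32 - b$ and $-b_d = \frac12 - b$ are strictly less than $-1$... wait — $\frac32 - b < 0$ but is only $< -1$ when $b > 5/2$; in general $\frac32 - b \in (-1,0)$ for $3/2 < b < 5/2$, and $-b_d = \frac12 - b \in (-2,-1)$. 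So this is a standard Beta-type convolution integral with one exponent in $(-1,0)$ and one below $-1$; the elementary estimate (split at $s = t/2$) gives $\int_0^t (1+t-s)^{\frac 32 - b}(1+s)^{-b_d}\,\dd s \le C (1+t)^{\frac 32 - b}$, since the exponent $\frac32 - b$ dominates (it is the larger of the two, i.e.\ closer to $0$, and $b_d = b - \tfrac12 > 1$ makes the $(1+s)^{-b_d}$ factor integrable). Combining, $\beta_d(u(t)) \le C(1+t)^{\frac32 - b}\big(\| u_{\init}\|_{p_b} + \sup_{s\in[0,t]}(1+s)^{b_d}|\eta(s)|\big)$, as claimed.

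The main obstacle I anticipate is purely bookkeeping: verifying that the conditions under which the stabilised bound of \cref{thm:time-evolution-b} holds (the clause "$b_r > b+1$ or $b_d > 1$", plus $R_d(t) \le \delta_R$) are met here — they are, since the hypotheses give exactly $b_r > b+1$ and $b_d = b - 1/2 > 1$ — and that the constant $\delta_R$ in the present statement can be taken as the minimum of the various $\delta_R$'s appearing in \cref{thm:time-evolution-b} when applied to $u_{\init}$, $r_r$ and $r_i$. No genuine analytic difficulty should arise beyond the elementary convolution estimate; the key point is that differentiating/propagating $L_2 u$ never costs regularity because $r_r, r_i$ inherit enough decay from the stationary state via \cref{thm:regularity-stationary-state}.
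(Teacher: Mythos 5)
Your proposal is correct and follows essentially the same route as the paper: Duhamel's formula \eqref{eq:solution-u-duhamel}, subadditivity of $\beta_d$, the stabilised bound of \cref{thm:time-evolution-b} applied to $u_{\init}$ and to $r_r,r_i$, and a split convolution estimate. The only (harmless) difference is that the paper propagates $r_r,r_i$ at the level $b_r$, obtaining the stronger decay $(1+t-s)^{\frac32-b_r}$, whereas you use the exponent $b$ and the weaker $(1+t-s)^{\frac32-b}$; your integral still closes because $b_d=b-\tfrac12>1$.
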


By a well-posedness result of the nonlinear evolution, we can prove
that $\eta$ and $\dot{\Theta}$ vary continuously as long as
\begin{equation*}
  |\beta_d(u(t))| < \frac{1}{2} \, |\alpha(\DD\hat{R}f_{\stat})|.
\end{equation*}
In this case, we can also control
$\dot{\Theta}$ by
\begin{equation*}
  |\dot{\Theta}(t)| \le C\, |\eta(t)|\, \beta_d(u(t))
\end{equation*}
for a constant $C$. Combining the previous estimates we can therefore
prove the result by a bootstrap argument.

\section{Norms and time-evolution under the transport operators}
\label{sec:norms-time-evolution}

The bound on the stationary state comes from an energy estimate with
an appropriate approximation scheme for this class of partially locked
states.

\begin{proof}[Proof Proposition \ref{thm:regularity-stationary-state}]
  As $\hat{f}_{\stat}$ is a stationary state, by
  \eqref{eq:kuramoto-mean-field-pde-fourier} it satisfies
  \begin{equation*}
    0 = \ell \partial_\xi (\hat{f}_{\stat})_\ell(\xi)
    + \frac{K\ell}{2}
    \left(
      r_{\stat}\, (\hat{f}_{\stat})_{\ell-1}
      - r_{\stat}\, (\hat{f}_{\stat})_{\ell+1}
    \right)
  \end{equation*}
  and $(\hat{f}_{\stat})_0 = \hat{g}$. For an a priori estimate, let
  $b \ge 0$ and take the inner product in $\sX_{p_b,-\frac 12}$ with
  \(\hat{f}_{\stat}\). This gives
  \begin{equation*}
    2b \| \hat{f}_{\stat} \|_{p_{b-\frac 12},0}^2
    \le \frac{K r_{\stat}}{2} \| \hat{g} \|_{p_{b+\frac 12}}
    \| \hat{f}_{\stat} \|_{p_{b-\frac 12},0},
  \end{equation*}
  which shows the result for $k=0$.

  Fixing $k=1/2,1,3/2,\dots$, we find the a priori estimate by
  taking the inner product in $\sX_{p_b,k-\frac 12}$
  \begin{equation*}
    2b \| \hat{f}_{\stat} \|_{p_{b-\frac 12},k}^2
    \le C \| \hat{f}_{\stat} \|^2_{p_{b},k-\frac 12}
    + C \| \hat{f}_{\stat} \|_{p_{b},k-\frac 12} \| \hat{g} \|_{p_{b}},
  \end{equation*}
  for some constant \(C\) using that \(|r_{\stat}|\le 1\), which
  implies the result by induction over $k$.

  The a priori estimates are justified for states with all locked
  oscillators at the stable fixed points. For this construct
  approximate states $\hat{f}^n_{\stat}$ as Fourier transform of
  $f^n_{\stat}$ given by
  \begin{equation*}
    f^n_{\stat}(\theta,\omega) =
    \begin{dcases}
      \delta_{\arcsin(\omega/(Kr_{\stat}))}(\theta)\, g^n(\omega)
      &\text{if $|\omega|\le K r_{\stat}$}, \\
      \frac{\sqrt{\omega^2 - (Kr_{\stat})^2}}{2\pi |\omega -
        Kr_{\stat}\sin \theta|}\, g^n(\omega)
      &\text{if $|\omega|>K r_{\stat}$},
    \end{dcases}
  \end{equation*}
  where $g^n$ is an approximation of $g$ such that $g^n$ has analytic
  regularity and $\| \hat{g}^n - \hat{g} \|_{p_b} \to 0$, e.g.\ $g^n$
  is obtained by convolution of $g$ with a Gaussian. By
  \cite{dietert-fernandez-gerard-varet-2018-landau-damping-pls},
  we then control
  \begin{equation*}
    \int_0^\infty \Big( |\hat{f}^n_{\stat}|^2 + |\partial_\xi
    \hat{f}^n_{\stat}|^2 \Big)
    \, \ee^{2a \xi}\, \dd \xi
    \le C\, \delta^\ell
  \end{equation*}
  with $a > 0$, $\ell \in \N$ for constants $C$ and $\delta <
  1$. Hence for $\hat{f}^n_{\stat}$ the a priori estimates are
  justified. Since
  $(\hat{f}^n_{\stat})_{\ell}(\xi) \to (\hat{f}_{\stat})_\ell(\xi)$ as
  $n\to \infty$, this shows the claimed bound.
\end{proof}

The results on the evolution operators are based on energy
estimates. The derivatives $\partial_\xi u_\ell$ can always be handled
in the same way, because they satisfy the same evolution equation.

\begin{proof}[Proof of Lemma \ref{thm:evolution-l1-b1}]
  By Morray's inequality, a function $w \in C_w([s,T], \sX_{p_{b}})$
  is uniformly continuous in $\xi$ over compact regions. Moreover, by
  the weak continuity $w_\ell(\cdot,\xi)$ is continuous in time for
  all $\ell \in \N$ and $\xi \in \R^+$ and so $w$ is a continuous
  function. By standard arguments on the scalar transport equation,
  this shows the uniqueness of solutions.

  For constructing a solution we use the following a priori estimate
  for $B_1$
  \begin{align*}
    \partial_t \| w \|^2_{p_{b}}
    &= \sum_{\ell \ge 1} \int_0^\infty
    \partial_\xi
    \Big(
    |w_\ell(t,\xi)|^2 + |\partial_\xi w_\ell(t,\xi)|^2
    \Big)
    |p_{A,b}(\xi)|^2 \dd \xi \\
    &\quad + K r_{\stat} \sum_{\ell \ge 1}
      \int_0^\infty
      \Re\Big[
      w_{\ell-1}(t,\xi)\conj{w_{\ell}(t,\xi)} - w_{\ell+1}(t,\xi) \conj{w_{\ell}(t,\xi)}
      \Big]
      |p_{A,b}(\xi)|^2 \dd \xi\\
    &\quad + K r_{\stat} \sum_{\ell \ge 1}
      \int_0^\infty
      \Re\Big[
      (\partial_\xi w_{\ell-1}(t,\xi)) \conj{(\partial_\xi w_{\ell}(t,\xi))}
      - (\partial_\xi w_{\ell+1}(t,\xi)) \conj{\partial_\xi w_{\ell}(t,\xi))}
      \Big]
      |p_{A,b}(\xi)|^2 \dd \xi.
  \end{align*}
  The first term is non-negative, because $p_b$ is non-decreasing. In
  the sums of the second and third term, the terms cancel after
  shifting one summand recalling the convention $w_0 \equiv 0$. Hence
  \begin{equation*}
    \partial_t \| w \|^2_{p_{b}} \le 0
  \end{equation*}
  and the same estimate holds for $L_1$.

  The result now follows from an approximation scheme and a standard
  compactness argument. We construct approximate solutions $w^n$ by
  restricting the evolution to $\ell \in [1,n]$ and smooth initial
  data with compact support $v^n$ with $v^n \to v$ in $\sX_{p_b}$. By
  the a priori estimate, these solutions satisfy $\| w^n \|_{p_b} \le
  \| v^n \|_{p_b}$ for $t \in [s,T]$. Hence $\{w^n : n \in \N\}$ is a
  bounded set in $L^\infty([s,T], \sX_{p_b})$. By the weak
  compactness, we extract a weak solution $w$. This shows the
  existence of a solution.

  For the weak continuity use that
  \begin{equation}
    \label{eq:bound-derivative-well-posedness}
    \partial_t w \in L^\infty([s,T], \sY_{p_b,0}),
  \end{equation}
  where $\sY_{p_b,0}$ is the Hilbert space defined by the norm
  \begin{equation*}
    \| v \|^2_{\sY_{p_{b},0}}
    = \sum_{\ell \ge 1} \int_0^\infty |v_{\ell}(\xi)|^2 |p_b(\xi)|^2
    \dd \xi.
  \end{equation*}
  The space $\sX_{p_b}$ is dense in $\sY_{p_b,0}$ so that
  \eqref{eq:bound-derivative-well-posedness} implies that $w$ is
  weakly continuous by standard functional analysis, see e.g.\ Theorem
  2.1 in \cite{strauss-1966-continuity-of-functions}.

  By the uniqueness, the complex linearity of \(E\) shows that the
  solution operator \(\sol Est\) is well-defined and complex linear.

  The decay estimate follows from refined energy estimates. For these,
  we will only present the a priori estimates, which can be justified
  in the same way.

  Let $w_\ell(t,\xi) = (\sol Est v)_l(\xi)$ and $\nu(t) = r_{\stat}$
  in the case of $E=L_1$ and $\nu(t) = r_{\stat} + \eta(t)$ in the
  case of $E = B_1$. Then with the weight $\phi = p_{A,b}$ and
  recalling the convention $v_0 \equiv 0$ we find
  \begin{align*}
    \frac{\dd}{\dd t}
    &\sum_{\ell \ge 1} \int_0^\infty
    |w_\ell(t,\xi)|^2 |\phi(\xi+t-s)|^2 \ell^{-1} \dd \xi \\
    &=
    \sum_{\ell \ge 1} \int_0^\infty
    \partial_\xi \left( |w_\ell(t,\xi)|^2 \right) |\phi(\xi+t-s)|^2
    \dd \xi
    +
    \sum_{\ell \ge 1} \int_0^\infty
    |w_\ell(t,\xi)|^2 \partial_\xi\left(|\phi(\xi+t-s)|^2\right)
      \ell^{-1} \dd \xi \\
    &\quad+ K \sum_{\ell \ge 1} \int_0^\infty
      \Re\left[
      \conj{\nu(t)} w_{\ell-1}(t,\xi) \conj{w_{\ell}(t,\xi)}
      - \nu(t) w_{\ell+1}(t,\xi) \conj{w_{\ell}(t,\xi)}
      \right] |\phi(\xi+t-s)|^2 \dd \xi \\
    &= - \sum_{\ell \ge 1} |w_\ell(t,0)|^2 |\phi(t-s)|^2
      - \sum_{\ell \ge 1} \int_0^\infty
      |w_\ell(t,\xi)|^2 \partial_\xi\left(|\phi(\xi+t-s)|^2\right)
      (1-\ell^{-1}) \dd \xi \\
    &\le - |w_1(t,0)|^2 |\phi(t-s)|^2,
  \end{align*}
  where we used that $\phi$ has non-negative derivative as $b \ge 0$
  and
  \begin{equation*}
    \sum_{\ell \ge 1}
    \Re\left[
      \conj{\nu(t)} w_{\ell-1}(t,\xi) \conj{w_{\ell}(t,\xi)}
    \right]
    =
    \sum_{\ell \ge 1}
    \Re\left[
      \nu(t) \conj{w_{\ell}(t,\xi)} w_{\ell+1}(t,\xi)
    \right].
  \end{equation*}
  Likewise we control $\partial_\xi w$ and as
  $\phi(\xi{+}t{-}s) = p_{A+t-s,b}(\xi)$ the claimed result
  follows.
\end{proof}

In preparation of Lemma \ref{thm:control-seminorms-by-weighted-sobolev}, we
first prove the following lemma.
\begin{lemma}
  \label{thm:beta-alpha-control-regularity-induction}
  Let $k = -1/2,-1,-3/2,\dots$ and $b > 1/2 + |k|$. Then there exists
  a constant $C_k$ only depending on $k$, $b$ and $K r_{\stat}$ such
  that
  \begin{equation*}
    \beta_{\alpha}(v)
    \le C_k A^{|k| - b} \| v \|_{p_{A,b},k}
  \end{equation*}
  and
  \begin{equation*}
    C_{-1/2} = \frac{1}{\sqrt{2b-1}}.
  \end{equation*}
\end{lemma}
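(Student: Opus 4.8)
The plan is to argue by induction on $k$, the base case being $k=-1/2$.

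For the base case, write $\beta_\alpha(v)=\|K_\Theta\|\int_0^\infty|(\ee^{\tau L_1}v)_1(0)|\,\dd\tau$ and estimate the integral by the Cauchy--Schwarz inequality in $\tau$ against the weight $(A+\tau)^{-b}$. For $b>1/2$ one has $\int_0^\infty(A+\tau)^{-2b}\,\dd\tau=A^{1-2b}/(2b-1)$, while \cref{thm:propagation-l1-b1} (with $E=L_1$, $s=0$ and weight $p_{A,b}$) controls the flux, $\int_0^\infty|(\ee^{\tau L_1}v)_1(0)|^2(A+\tau)^{2b}\,\dd\tau\le\|v\|_{p_{A,b}}^2$. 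Multiplying the two bounds gives exactly $\beta_\alpha(v)\le(\|K_\Theta\|/\sqrt{2b-1})\,A^{1/2-b}\,\|v\|_{p_{A,b}}$, which is the asserted estimate for $k=-1/2$, with $C_{-1/2}=\|K_\Theta\|/\sqrt{2b-1}$.

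For the inductive step, assume the estimate at level $k$ and take $b>1+|k|$ (so also $b>1/2+|k-1/2|$ and $b>3/2$). From the explicit action of $L_1$ on the first mode, Duhamel along the characteristics of the transport part gives $(\ee^{tL_1}v)_1(0)=v_1(t)-\tfrac{Kr_{\stat}}{2}\int_0^t(\ee^{sL_1}v)_2(t-s)\,\dd s$. Integrating in $t$, the contribution of $v_1(t)$ is handled as in the base case: it involves only the mode $\ell=1$, whose weight $\ell^{2m}$ equals $1$ for every $m$, so Cauchy--Schwarz bounds it by $(\|K_\Theta\|/\sqrt{2b-1})\,A^{1/2-b}\,\|v\|_{p_{A,b},k-1/2}$, which is of the required order since $A^{1/2-b}\le A^{|k-1/2|-b}$ for $A\ge1$. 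For the remaining term, unfold $(\ee^{sL_1}v)_2$ by Duhamel along the characteristics of the second mode (which travels at speed $2$); this produces a free term in $v_2$ at a transported position together with a coupling term feeding in the modes $\ell=1,3$ of $\ee^{\sigma L_1}v$ at shifted positions, and after a further change of variables and Fubini the coupling term is cast into the form of the quantities controlled by the induction hypothesis at level $k$ applied to the intermediate states $\ee^{\sigma L_1}v$ (using also the decay of $\|\ee^{\sigma L_1}v\|$ from \cref{thm:propagation-l1-b1}). The crux of the bookkeeping is that the factor $\ell$ carried by the transport speed of the mode $\ell$ converts each step down in the mode weight, $\ell^{2k}\to\ell^{2k-1}$, into half an extra power of the weight $(A+\xi)^b$ --- exactly the gap between level $k$ and level $k-1/2$. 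Collecting the contributions and tracking the powers of $A$ and the numerical constants then yields the estimate at level $k-1/2$ and closes the induction; the derivatives $\partial_\xi v_\ell$ are carried along unchanged since they solve the same equations.

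The step I expect to be the main obstacle is the second-mode (``echo'') term. The cancellation that makes the energy in \cref{thm:propagation-l1-b1} monotone is special to the weight $\ell^{-1}$ and is lost for $k\neq-1/2$, so one cannot simply run an energy estimate at level $k$; moreover a bound of the echo term obtained by the triangle inequality alone loses half a power of $A$ relative to what is claimed. One therefore has to retain the structure of the cascade ``a high mode of $v$ is transported out in $\xi$, repeatedly coupled down towards the mode $\ell=1$, and finally read off at $\xi=0$'' --- for instance by integrating by parts in time, or by keeping the self-consistent Volterra form of the mode-$1$ response --- and then balance the powers of $\ell$ produced by the transport speeds and the coupling coefficients against the gain of the weight $(A+\xi)^b$, which is large precisely where the transported high modes sit. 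This balance is what forces the threshold $b>1/2+|k|$ and determines $C_{k-1/2}$ in terms of $C_k$.
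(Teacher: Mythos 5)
Your base case is exactly the paper's: Cauchy--Schwarz against $(A+\tau)^{-2b}$ combined with the flux bound of \cref{thm:propagation-l1-b1}, giving $C_{-1/2}=\|K_\Theta\|/\sqrt{2b-1}$. The inductive step, however, is not a proof but a plan whose central difficulty you yourself flag and do not resolve. After the (correct) Duhamel formula along the mode-$1$ characteristic, the echo term becomes, after Fubini, a full space--time $L^1$ quantity $\int_0^\infty\int_0^\infty|(\ee^{sL_1}v)_2(\sigma)|\,\dd\sigma\,\dd s$; this is not of the form $\beta_\alpha$ of anything (which only sees mode $1$ at $\xi=0$), so it is not covered by the induction hypothesis, and ``casting it into that form after a further change of variables'' is precisely the missing argument. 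Unfolding mode $2$ launches an infinite cascade through all modes $\ell$, each step carrying a factor of order $K r_\stat\ell$, and you give no mechanism for summing this cascade or for converting it back into the inductively controlled quantity. The assertion that the balance ``yields the estimate and closes the induction'' is where the proof would actually have to happen.

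Moreover, your stated reason for taking this route --- that one cannot run an energy estimate at level $k$ because the cancellation of \cref{thm:propagation-l1-b1} is special to $\ell^{-1}$ --- is the opposite of what the paper does. The paper runs the energy estimate in $\sX_{p_{A,b},k}$ anyway: the shift in the coupling sum no longer cancels, but the commutator error is bounded by $|(\ell{+}1)^{2k+1}-\ell^{2k+1}|\lesssim\ell^{2k}$, hence by $C\|v(t)\|^2_{p_{A,b},k}$, and is absorbed by Gr\"onwall over $t\in[0,1]$. The payoff is the dissipation term $-2b\|v(t)\|^2_{p_{A,b-\frac12},k+\frac12}$ coming from differentiating the weight: integrated in time it produces a good time $t^*\in[0,1]$ at which $\ee^{t^*L_1}v$ has gained half a power of $\ell$ at the cost of half a power of the weight, and the semigroup property then reduces $\beta_\alpha(v)$ to $\beta_\alpha(\ee^{t^*L_1}v)$ at level $k+\tfrac12$, where the induction hypothesis applies (this is also what forces $b>\tfrac12+|k|$). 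If you want to salvage your cascade approach you would need to carry out the combinatorial summation over modes explicitly; as written, the inductive step is a genuine gap.
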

\begin{proof}
  We prove it by induction over $k$ starting at $k=-1/2$ and going
  downwards. The base case is a simple application of
  Lemma \ref{thm:evolution-l1-b1} as by the Cauchy-Schwarz inequality
  \begin{align*}
    \left(\int_0^\infty \left| (\ee^{tL_1} v)_1(0) \right| \dd t\right)^2
    &\le
    \left(
      \int_0^\infty \left| (\ee^{tL_1} v)_1(0) \right|^2 (A+t)^{2b} \dd t
    \right)
    \left(
      \int_0^\infty (A+t)^{-2b} \dd t
      \right) \\
    &\le \frac{A^{1-2b}}{2b-1} \| v \|^2_{p_{A,b}}.
  \end{align*}

  For the induction step we use that the transport evolution is
  regularising in the spatial modes $\ell$ at the expense of a power
  in $\xi$. Assuming it is true for $k + 1/2$, we look at $k$ and find
  with the notation $v(t) = \ee^{tL_1} v$
  \begin{align*}
    &\frac{\dd}{\dd t} \| v(t) \|^2_{p_{A,b},k} \\
    &= \sum_{\ell \ge 1} \int_0^\infty
    \partial_\xi
    \Big(
    |v_\ell(t,\xi)|^2 + |\partial_\xi v_\ell(t,\xi)|^2
    \Big)
    |p_{A,b}(\xi)|^2 \ell^{2k+1} \dd \xi \\
    &\quad + K r_{\stat} \sum_{\ell \ge 1}
      \int_0^\infty
      \Re\Big[
      v_{\ell-1}(t,\xi)\conj{v_{\ell}(t,\xi)} - v_{\ell+1}(t,\xi) \conj{v_{\ell}(t,\xi)}
      \Big]
      |p_{A,b}(\xi)|^2 \ell^{2k+1} \dd \xi\\
    &\quad + K r_{\stat} \sum_{\ell \ge 1}
      \int_0^\infty
      \Re\Big[
      (\partial_\xi v_{\ell-1}(t,\xi)) \conj{(\partial_\xi v_{\ell}(t,\xi))}
      - (\partial_\xi v_{\ell+1}(t,\xi)) \conj{\partial_\xi v_{\ell}(t,\xi))}
      \Big]
      |p_{A,b}(\xi)|^2 \ell^{2k+1} \dd \xi \\
    &\le - A^{2b} |v_1(t,\xi)|^2
      - 2b \sum_{\ell \ge 1} \int_0^\infty
      \Big(
      |v_\ell(t,\xi)|^2 + |\partial_\xi v_\ell(t,\xi)|^2
      \Big)
      |p_{A,b-\frac 12}(\xi)|^2 \ell^{2k+1} \dd \xi \\
    &\quad +
      K r_{\stat} \sum_{\ell \ge 1} \int_0^\infty
      \Re\Big[
      v_{\ell}(t,\xi) \conj{v_{\ell+1}(t,\xi)}
      + (\partial_\xi v_{\ell}(t,\xi)) \conj{(\partial_{\xi} v_{\ell+1}(t,\xi))}
      \Big]
      ((\ell{+}1)^{2k+1} {-} \ell^{2k+1}) |p_{A,b}(\xi)|^2 \dd \xi.
  \end{align*}
  As $|(\ell{+}1)^{2k+1} - \ell^{2k+1}| \le |2k+1| \ell^{2k}$, this
  means that there exists a constant $C$ such that
  \begin{equation*}
    \frac{\dd}{\dd t}  \| v(t) \|^2_{p_{A,b},k}
    \le - A^{2b} |v_1(t,0)|^2
    - 2b \| v(t) \|_{p_{A,b-\frac 12},k+\frac 12}
    + C \| v(t) \|^2_{p_{A,b},k}.
  \end{equation*}
  Therefore,
  \begin{equation}
    \label{eq:beta-alpha-induction-lemma-regularisation}
    \| v \|^2_{p_{A,b},k}
    \ge \int_0^\infty \ee^{-Ct}
    \left[
      A^{2b} |v_1(t,0)|^2
      + 2b \| v(t) \|_{p_{A,b-\frac 12},k+\frac 12}
    \right]
    \dd t.
  \end{equation}
  Hence there exists a time $t^* \in [0,1]$ such that
  \begin{equation*}
    \| \ee^{t^*L_1} v \|^2_{p_{A,b-\frac 12},k+\frac 12}
    \le \frac{\ee^{C}}{2b}
    \| v \|^2_{p_{A,b},k}.
  \end{equation*}
  Using the semigroup property, we find with $v^* = \ee^{t^*L_1} v$
  \begin{equation*}
    \beta_{\alpha}(v)
    = \int_0^{t^*} |v_1(t,0)|\, \dd t
    + \int_0^\infty |(\ee^{tL_1} v^*)_1(0)|\, \dd t.
  \end{equation*}
  The first term can be controlled by
  \eqref{eq:beta-alpha-induction-lemma-regularisation} and the second
  term by the induction hypothesis. This proves the induction step.
\end{proof}

The bounds on the seminorms are now an easy consequence.
\begin{proof}[Proof of Lemma \ref{thm:control-seminorms-by-weighted-sobolev}]
  The bound on $\beta_{\alpha}$ is already proved in the previous
  lemma. For $\beta_{d}$ we use the previous lemma with $k=-3/2$ as
  \begin{align*}
    \| (\ell u_{\ell-1})_{\ell} \|_{p_b} &\le \| u \|_{p_b,-\frac 32}, \\
    \| (\ell u_{\ell})_{\ell} \|_{p_b} &\le \| u \|_{p_b,-\frac 32}, \\
    \| (\ell u_{\ell+1})_{\ell} \|_{p_b} &\le  2\| u \|_{p_b,-\frac 32}.
  \end{align*}

  Finally, the control of $\beta_{\eta}$ is a consequence of the
  Sobolev embedding theorem, which implies a constant $C_S$ such that
  a function $u_1$ of $\xi$ satisfies
  \begin{equation*}
    u_1(0) \le C_S \int_0^1
    \left(
      |u_1(\xi)|^2 + |\partial_\xi u_1(\xi)|^2
    \right)
    \dd \xi. \qedhere
  \end{equation*}
\end{proof}

\section{Volterra equation for linearised evolution}
\label{sec:linear-volterra}

For controlling the order parameter, we use the theory of the Volterra
equation and follow the setup of the book by
\textcite{gripenberg-londen-staffans-1990-volterra}.

For the convolution kernel \(k_{Lc}\) of the linearised evolution, let
\begin{equation*}
  k_{Lc,r}(\tau) = -(\ee^{(t-s)L_1} r_r)_1(0) \quad\text{and}\quad
  k_{Lc,i}(\tau) = -(\ee^{(t-s)L_1} r_i)_1(0)
\end{equation*}
so that the convolution kernel \(k_{Lc}\) can be written as
\begin{equation*} \setlength\arraycolsep{4pt}
  k_{Lc}(\tau) =
  \begin{pmatrix}
    \Re k_{Lc,r}(\tau) & \Re k_{Lc,i}(\tau) \\
    \Im k_{Lc,r}(\tau) & \Im k_{Lc,i}(\tau)
  \end{pmatrix}.
\end{equation*}

Using the decay under $L_1$, we find the bound on the kernel.
\begin{proof}[Proof of Lemma \ref{thm:decay-linear-volterra-kernel}]
  By the Cauchy Schwarz inequality it holds that
  \begin{equation*}
    \left(
      \int_0^\infty |k_{Lc,r}(t)| (1+t)^b\, \dd t
    \right)^2
    \le
    \left(
      \int_0^\infty |k_{Lc,r}(t)|^2 (1+t)^{2b_r}\, \dd t
    \right)
    \left(
      \int_0^\infty (1+t)^{2(b-b_r)}\, \dd t
    \right)
  \end{equation*}
  and likewise for $k_{Lc,i}$.  As $b_r > b + 1/2$, the estimate of
  Lemma~\ref{thm:evolution-l1-b1} implies the result.
\end{proof}

The rotation symmetry implies that $\DD \hat{R} \hat{f}_\stat$ is a
zero eigenmode of the linearised evolution and its corresponding order
parameter \((\DD \hat{R} \hat{f}_\stat)_1(0)\) is $\ii r_{\stat}$,
i.e.\ purely imaginary.

Next we prove that the linear stability from Definition
\ref{def:linear-stability} is sharp for the original system without
the complexification, similar to the results in
\cite{dietert-fernandez-gerard-varet-2018-landau-damping-pls}. We
prove the results directly, because we need to handle the case of
poles at $\Re z =0$, which can be the boundary of the resolvent set.

\begin{proposition}
  Let $b_r \ge 0$ and assume the stationary state $\hat{f}_{\stat}$ is
  regular enough such that $\| r_{r} \|_{p_{b_r}} < \infty$ and
  $\| r_{i} \|_{p_{b_r}} < \infty$.

  If $\lambda$ is a root of the characteristic equation with
  $\Re \lambda > 0$ and $\Im \lambda = 0$, then there exists an
  eigenmode $v_\lambda$ with $\| v_{\lambda} \|_{p_{b_r}} < \infty$,
  i.e.\ satisfying $Lv_{\lambda} = \lambda v_{\lambda}$.

  If $\lambda$ is a root of the characteristic equation with
  $\Re \lambda > 0$ and $\Im \lambda \not= 0$, then also
  $\conj{\lambda}$ is a root of the characteristic equation and there
  exist modes $v_{\lambda,c}$ and $v_{\lambda,s}$ with
  $\| v_{\lambda,c} \|_{p_{b_r}} < \infty$ and
  $\| v_{\lambda,c} \|_{p_{b_r}} < \infty$ satisfying
  \begin{align*}
    L v_{\lambda,c} &= (\Re \lambda) v_{\lambda,c} - (\Im \lambda) v_{\lambda,s}, \\
    L v_{\lambda,s} &= (\Im \lambda) v_{\lambda,c} + (\Re \lambda) v_{\lambda,s}.
  \end{align*}

  If $\lambda$ is a root of the characteristic equation with
  $\Re \lambda = 0$ and $b_r > 1$, then the above modes exist with the
  bound $\| v_{\lambda} \|_{p_b} < \infty$ and
  $\| v_{\lambda,c} \|_{p_b} < \infty$,
  $\| v_{\lambda,s} \|_{p_b} < \infty$ for $0 \le b < b_r -1$.

  If $b_r > 2$ and $\lambda=0$ is not a simple root, i.e.
  \begin{equation*}
    \frac{\dd}{\dd z}
    \det\Big(\Id + (\Lap k_{Lc})(z)\Big)
    \Bigg|_{z=\lambda}
    = 0
  \end{equation*}
  then at least one of the following possibilities holds:
  \begin{itemize}
  \item There exist two eigenmodes $v_{0,r}$ and $v_{0,i}$
    with $\| v_{0,r} \|_{p_b} < \infty$ and
    $\| v_{0,r} \|_{p_b} < \infty$ for $0 \le b < b_r -1$.
  \item There exist two modes $v_{0,0}$ and $v_{0,1}$ with
    $\| v_{0,0} \|_{p_{p}} < \infty$ and
    $\| v_{0,1} \|_{p_{b-1}} < \infty$ for $1 \le b < b_r-1$
    satisfying
    \begin{equation*}
      L v_{0,0} = 0
      \text{ and }
      L v_{0,1} = v_{0,0}.
    \end{equation*}
  \end{itemize}
\end{proposition}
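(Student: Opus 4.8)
The plan is to read each mode off a nonzero null vector of $\Id + (\Lap k_{Lc})(\lambda)$ by inverting the transport part $L_1$, using that $(\Lap k_{Lc})(\lambda)$ is, up to an overall sign, the $2\times 2$ matrix whose columns collect the real and imaginary parts of $((\lambda - L_1)^{-1} r_r)_1(0)$ and $((\lambda - L_1)^{-1} r_i)_1(0)$, since $k_{Lc}(t)$ is built from $(\ee^{tL_1}r_r)_1(0)$ and $(\ee^{tL_1}r_i)_1(0)$.

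First, the resolvent of $L_1$. The energy estimate in the proof of \cref{thm:well-posedness-l1-b1} gives $\partial_t \| \ee^{tL_1} v \|_{p_{b_r}}^2 \le 0$, so $\ee^{tL_1}$ is a contraction semigroup on $\sX_{p_{b_r}}$ and $\sigma(L_1) \subseteq \{\Re z \le 0\}$. Hence for $\Re\lambda > 0$ the resolvent $(\lambda - L_1)^{-1} = \int_0^\infty \ee^{-\lambda t}\,\ee^{tL_1}\,\dd t$ is bounded on $\sX_{p_{b_r}}$, and any mode built from it inherits a finite $p_{b_r}$-norm. For $\Re\lambda = 0$ the resolvent need not be bounded, but the transport dispersion quantified by \cref{thm:propagation-l1-b1} (with \cref{thm:control-seminorms-by-weighted-sobolev}) shows $\| \ee^{tL_1} w \|_{p_b} \lesssim (1+t)^{b-b_r}\| w \|_{p_{b_r}}$ for $b < b_r$, so $\int_0^\infty \ee^{-\lambda t}\,\ee^{tL_1} w\,\dd t$ still converges absolutely in $\sX_{p_b}$ once $b_r - b > 1$. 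This one power of the weight spent on the critical line is the source of the hypothesis $b_r > 1$ and of the range $0 \le b < b_r-1$.

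Next, the modes. Write $L = L_1 + L_2$ with $L_2$ of finite rank, its range spanned by $r_r$ and $r_i$. Then $Lv = \lambda v$ is equivalent to $v = (\lambda - L_1)^{-1}(\alpha r_r + \beta r_i)$ together with a $2\times 2$ consistency condition on $(\alpha,\beta)$ whose coefficient matrix is exactly $\Id + (\Lap k_{Lc})(\lambda)$ --- this is how the Volterra kernel was constructed. A root $\lambda$ thus yields a nonzero $(\alpha,\beta)$ and a candidate mode $v_\lambda$. If $\Re\lambda > 0$ and $\Im\lambda = 0$, the matrix $(\Lap k_{Lc})(\lambda)$ is real and $(\alpha,\beta)$ may be chosen real; then $(v_\lambda)_1(0)$ has real part $\alpha$ and imaginary part $\beta$ by the consistency relation, so $L_2 v_\lambda = \alpha r_r + \beta r_i = (\lambda - L_1)v_\lambda$ and $Lv_\lambda = \lambda v_\lambda$ with a finite $p_{b_r}$-norm. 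If $\Im\lambda \ne 0$, then since $k_{Lc}$ is real valued $\conj{(\Lap k_{Lc})(\lambda)} = (\Lap k_{Lc})(\conj\lambda)$, so $\conj\lambda$ is a root as well; in the complexification underlying the spectral analysis, where $L$ acts complex linearly (and $L_1$ commutes with complex conjugation because $r_\stat$ is real), the null vectors at $\lambda$ and at $\conj\lambda$ produce eigenmodes interchanged by the reality involution, and the real and imaginary parts $v_{\lambda,c}$, $v_{\lambda,s}$ of a suitably normalised combination of them satisfy the real Jordan block
\begin{align*}
  L v_{\lambda,c} &= (\Re\lambda)\, v_{\lambda,c} - (\Im\lambda)\, v_{\lambda,s}, \\
  L v_{\lambda,s} &= (\Im\lambda)\, v_{\lambda,c} + (\Re\lambda)\, v_{\lambda,s},
\end{align*}
with finite $p_{b_r}$-norms. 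For $\Re\lambda = 0$ the same construction applies verbatim with $(\lambda - L_1)^{-1}$ replaced by the conditionally convergent integral of the first step, at the price of finiteness only in $\sX_{p_b}$ with $b < b_r-1$.

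Finally, the double root at $\lambda = 0$. Differentiating the characteristic function, a root of multiplicity at least two forces either a two-dimensional kernel of $\Id + (\Lap k_{Lc})(0)$ --- which by the previous step gives two independent eigenmodes $v_{0,r}, v_{0,i}$ --- or a Jordan chain for the $2\times 2$ pencil. In the latter case, take $v_{0,0}$ the eigenmode from the genuine null vector and solve the finite-dimensional system for the $L_2$-component of $v_{0,1}$, its solvability being precisely the double-root condition; then
\begin{equation*}
  v_{0,1} = (-L_1)^{-1}(v_{0,0} - L_2 v_{0,1}) = -\int_0^\infty \ee^{tL_1}(v_{0,0} - L_2 v_{0,1})\,\dd t
\end{equation*}
satisfies $Lv_{0,1} = v_{0,0}$ and $Lv_{0,0} = 0$. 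Since $v_{0,0} \in \sX_{p_b}$ for $b < b_r-1$ and $\| \ee^{tL_1} v_{0,0} \|_{p_{b'}} \lesssim (1+t)^{b'-b}\| v_{0,0} \|_{p_b}$, this integral converges in $\sX_{p_{b'}}$ for $b' < b-1$, which accounts for the further loss of a power and the hypothesis $b_r > 2$.

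The main obstacle is the control of the modes on the critical line $\Re z = 0$, where $L_1$ has no bounded resolvent: the modes exist only because the transport genuinely carries mass off to $\xi = +\infty$, and turning this dispersion into conditionally convergent resolvent integrals --- while keeping a sharp count of how many powers of $p_b$ are consumed, one for a simple root and two for the Jordan chain --- is the delicate point. The algebra separating the real, the complex-conjugate-pair, and the Jordan-block cases is routine by comparison.
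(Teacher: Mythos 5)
Your construction is essentially the paper's: the modes are exactly the Bochner integrals $\int_0^\infty \ee^{tL_1}(w_r r_r + w_i r_i)\,\ee^{-\lambda t}\,\dd t$ for null vectors of $\Id + (\Lap k_{Lc})(\lambda)$, verified through the consistency of $(v_\lambda)_1(0)$ with $L_2$, the critical line and the Jordan chain are handled by the same power counting in the weight, and the solvability of your finite-dimensional Jordan system is what the paper checks explicitly via the $2\times 2$ identity $\tfrac{\dd}{\dd z}M\,\adjoint M + M\,\tfrac{\dd}{\dd z}\adjoint M = \tfrac{\dd}{\dd z}\det M$. Two presentational caveats only: for $\Im\lambda\neq 0$ the $\Re$/$\Im$ must remain inside the Laplace integral (the paper combines the $\lambda$ and $\conj{\lambda}$ null vectors as $w\,\ee^{-\lambda t}+\conj{w}\,\ee^{-\conj{\lambda}t}$ to stay real), and your first Jordan-chain display has a sign slip, though the integral formula that follows it is the correct one.
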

\begin{proof}
  If $\lambda$ is satisfying $\Im \lambda = 0$ and
  $\Re \lambda \ge 0$, then $\Lap k_{Lc}$ is a real matrix. Hence if
  $\lambda$ is a root, there exists $w_r,w_i\in\R$ such that
  \begin{equation*}
    \begin{pmatrix}
      w_r \\ w_i
    \end{pmatrix}
    \in \ker [1 + \Lap k_{Lc}(\lambda)].
  \end{equation*}
  Then define the mode $v_{\lambda}$ by
  \begin{equation*}
    v_{\lambda} = \int_0^\infty \ee^{tL_1} (w_r r_r + w_i r_i)
    \ee^{-\lambda t} \dd t,
  \end{equation*}
  which is a converging Bochner integral with the claimed bounds by
  Lemma~\ref{thm:evolution-l1-b1}.

  Moreover, we find
  \begin{equation*}
    \begin{pmatrix}
      \Re (v_{\lambda})_0(1) \\
      \Im (v_{\lambda})_0(1)
    \end{pmatrix}
    = - \Lap k_{Lc}(\lambda)
    \begin{pmatrix}
      w_r \\ w_i
    \end{pmatrix}
    =
    \begin{pmatrix}
      w_r \\ w_i
    \end{pmatrix},
  \end{equation*}
  so that
  \begin{equation*}
    L_2 v_{\lambda} = w_r r_r + w_i r_i.
  \end{equation*}
  On the other hand
  \begin{equation*}
    L_1 v_{\lambda} = \lambda v_{\lambda} - (w_r r_r + w_i r_i),
  \end{equation*}
  which shows that $v_{\lambda}$ is the claimed eigenmode.
  In the case of a root $\lambda$ with $\Re \lambda \ge 0$ and $\Im
  \lambda \not = 0$, let
  \begin{equation*}
    \begin{pmatrix}
      w_1 \\ w_2
    \end{pmatrix}
    \in \ker(1+\Lap k_{Lc}(\lambda)).
  \end{equation*}
  Taking the conjugate shows that
  \begin{equation*}
    \begin{pmatrix}
      \conj{w_1} \\ \conj{w_2}
    \end{pmatrix}
    \in \ker(1+\Lap k_{Lc}(\conj{\lambda})),
  \end{equation*}
  so that $\conj{\lambda}$ is also a root. Then define the modes as
  \begin{align*}
    v_{\lambda,c} &= \int_0^\infty \ee^{tL_1}
    \Big[ (w_1\ee^{-\lambda t} + \conj{w_1} \ee^{-\conj{\lambda} t}) r_r
    + (w_2\ee^{-\lambda t} + \conj{w_2} \ee^{-\conj{\lambda} t}) r_i
    \Big]
    \dd t, \\
    v_{\lambda,s} &= \int_0^\infty \ee^{tL_1}
    \Big[ -\ii (w_1\ee^{-\lambda t} + \conj{w_1} \ee^{-\conj{\lambda} t}) r_r
    - \ii (w_2\ee^{-\lambda t} + \conj{w_2} \ee^{-\conj{\lambda} t}) r_i
    \Big]
    \dd t,
  \end{align*}
  which satisfy the claimed bounds. Moreover, as before
  \begin{equation*}
    \begin{pmatrix}
      \Re (v_{\lambda,c})_1(0) \\
      \Im (v_{\lambda,c})_1(0)
    \end{pmatrix}
    =
    \begin{pmatrix}
      w_1 + \conj{w_1}\\
      w_2 + \conj{w_2}
    \end{pmatrix}
    \quad\text{ and }\quad
    \begin{pmatrix}
      \Re (v_{\lambda,s})_1(0) \\
      \Im (v_{\lambda,s})_1(0)
    \end{pmatrix}
    =
    \begin{pmatrix}
      -\ii(w_1 - \conj{w_1})\\
      -\ii(w_2 - \conj{w_2})
    \end{pmatrix}.
  \end{equation*}
  Therefore, we find directly
  \begin{align*}
    L v_{\lambda,c} &= (\Re \lambda) v_{\lambda,c} - (\Im \lambda) v_{\lambda,s}, \\
    L v_{\lambda,s} &= (\Im \lambda) v_{\lambda,c} + (\Re \lambda) v_{\lambda,s},
  \end{align*}
  which is the claimed relation.

  For the case \(\Re \lambda = 0\) notice the bound
  \begin{equation*}
    \| \ee^{t L_1} r_r \|_{p_b}
    \le (1+t)^{b-b_r} \| \ee^{t L_1} r_r \|_{p_{1+t,b_r}}
  \end{equation*}
  directly from the definition of the weighted norm (likewise for
  \(r_i\)). Hence the result follows again with the claimed bounds as
  we take \(b<b_r-1\).

  In the case that $\lambda$ is not a simple root with
  $\Re \lambda \ge 0$ and $\Im \lambda = 0$, one possibility is that
  $1 + \Lap k_{Lc}(\lambda) = 0$. In this case, we have the following
  two eigenmodes
  \begin{equation*}
    v_{\lambda,r} = \int_0^\infty \ee^{tL_1} r_r \ee^{-\lambda t} \dd t
    \qquad\text{ and }\qquad
    v_{\lambda,i} = \int_0^\infty \ee^{tL_1} r_i \ee^{-\lambda t} \dd t.
  \end{equation*}
  Otherwise, find $a_r$ and $a_i$ such that
  \begin{equation*}
    \Big[\adjoint (\Id+\Lap k_{Lc}(\lambda))\Big]
    \begin{pmatrix}
      a_r \\ a_i
    \end{pmatrix}
    =:
    \begin{pmatrix}
      w_r \\ w_i
    \end{pmatrix}
    \not = 0
  \end{equation*}
  with the adjoint
  \begin{equation*}
    \adjoint (\Id+\Lap k_{Lc}(\lambda))
    =
    \begin{pmatrix}
      1 + \Lap \Im k_{Lc,i}(\lambda)
      & - \Lap\Re k_{Lc,i}(\lambda) \\
      -\Lap \Im k_{Lc,r}(\lambda)
      & 1 + \Lap \Re k_{Lc,r}(\lambda)
    \end{pmatrix}.
  \end{equation*}
  Let
  \begin{equation*}
    \begin{pmatrix}
      w_r' \\ w_i'
    \end{pmatrix}
    = \left.\frac{\dd}{\dd z}
      \adjoint (\Id+\Lap k_{Lc}(z))\right|_{z=\lambda}
    \begin{pmatrix}
      a_r \\ a_i
    \end{pmatrix}
  \end{equation*}
  and define the modes
  \begin{align*}
    v_{\lambda,0}
    &= \int_0^\infty \ee^{tL_1} (w_r r_r + w_i r_i) \ee^{-\lambda t}
      \dd t \\
    v_{\lambda,1}
    &= \int_0^\infty \ee^{tL_1}
      \Big[(-t w_r + w_r') r_r + (-t w_i + w_i') r_i\Big]
       \ee^{-\lambda t} \dd t.
  \end{align*}
  As before, we have that $L v_{\lambda,0} = \lambda v_{\lambda,0}$
  and for the mode $v_{\lambda,1}$ we find that
  \begin{equation*}
    \begin{pmatrix}
      \Re (v_{\lambda,1})_1(0) \\
      \Im (v_{\lambda,1})_1(0)
    \end{pmatrix}
    = - \left.\frac{\dd}{\dd z} (\Id{+}\Lap k_{Lc}(z))\right|_{z=\lambda}
    \begin{pmatrix}
      w_r \\ w_i
    \end{pmatrix}
    - \Lap k_{Lc}(\lambda)
    \begin{pmatrix}
      w'_r \\ w'_i
    \end{pmatrix}.
  \end{equation*}
  With $M(z) = \Id + \Lap k_{Lc}(z)$, this can be written as
  \begin{align*}
    \begin{pmatrix}
      \Re (v_{\lambda,1})_1(0) \\
      \Im (v_{\lambda,1})_1(0)
    \end{pmatrix}
    &= -
    \left\{
      \left.\frac{\dd}{\dd z} M(z) \right|_{z=\lambda}
      \adjoint M(\lambda)
      + M(\lambda)
      \left.\frac{\dd}{\dd z} \adjoint M(z)\right|_{z=\lambda}
      \right\}
      \begin{pmatrix}
        a_r \\ a_i
      \end{pmatrix}
    +
    \begin{pmatrix}
      w_r' \\ w_i'
    \end{pmatrix} \\
    &= -
    \left.\frac{\dd}{\dd z} \det M(z) \right|_{z=\lambda}
    \begin{pmatrix}
      a_r \\ a_i
    \end{pmatrix}
    +
    \begin{pmatrix}
      w_r' \\ w_i'
    \end{pmatrix}.
  \end{align*}
  As we assume that $\lambda$ is not a simple root of $\det(\Id+\Lap k_{Lc})$,
  the first term vanishes and we find that
  \begin{equation*}
    \begin{pmatrix}
      \Re (v_{\lambda,1})_1(0) \\
      \Im (v_{\lambda,1})_1(0)
    \end{pmatrix}
    =
    \begin{pmatrix}
      w_r' \\ w_i'
    \end{pmatrix}.
  \end{equation*}
  Then we can directly verify as before that
  \begin{equation*}
    L v_{\lambda,1} = \lambda v_{\lambda,1} + v_{\lambda,0},
  \end{equation*}
  which is the claimed relation.
\end{proof}

The weights $p_{A,b}$ satisfy a refined submultiplicative as
\begin{equation*}
  p_{A,b}(s+t) \le p_{1,b}(s)\, p_{A,b}(t)
  \qquad \text{for $s,t \in \R^+$},
\end{equation*}
for $A \ge 1$ and $b \ge 0$. This allows to control the
convolution with a refined Young inequality.
\begin{lemma}
  \label{thm:young-inequality}
  Let $\alpha \in L^1(\R^+,p_{b})$ and
  $\beta \in L^\infty(\R^+,p_{A,b})$ with $A\ge 1$ and $b \ge 1$, then
  \begin{equation*}
    \| \alpha \conv \beta \|_{L^\infty(\R^+,p_{A,b})}
    \le \| \alpha \|_{L^1(\R^+,p_b)}\,
    \| \beta \|_{L^\infty(\R^+,p_{A,b})}.
  \end{equation*}
\end{lemma}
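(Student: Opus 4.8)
The plan is to derive the estimate directly from the submultiplicativity of $p_{A,b}$ recorded just above, together with the elementary pointwise bound for the Volterra convolution $(\alpha\conv\beta)(t)=\int_0^t\alpha(t-s)\beta(s)\,\dd s$. First I would remove the apparent mismatch between the two weights appearing in the statement: since $A\ge 1$ we have $p_b(t)=(1+t)^b\le(A+t)^b=p_{A,b}(t)$ for every $t\ge 0$, hence $\| \alpha \|_{L^1(\R^+,p_b)}\le\| \alpha \|_{L^1(\R^+,p_{A,b})}<\infty$ and $\| \beta \|_{L^\infty(\R^+,p_b)}\le\| \beta \|_{L^\infty(\R^+,p_{A,b})}<\infty$. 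Thus all three quantities entering the asserted inequality are finite, and it suffices to prove the inequality with every weight equal to $p_b$ (in fact for any $b\ge 0$, submultiplicativity of $p_b$ being all that is used).

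For the main step I would fix $t\in\R^+$ and start from the pointwise bound, using submultiplicativity of the norm on the values,
\[
  \bigl\| (\alpha\conv\beta)(t) \bigr\|
  \le \int_0^t \| \alpha(t-s) \|\,\| \beta(s) \|\,\dd s .
\]
Multiplying by $p_b(t)$ and inserting $p_b(t)=p_b\bigl((t-s)+s\bigr)\le p_b(t-s)\,p_b(s)$ for $0\le s\le t$ factorises the integrand, and pulling the essential supremum of $\| \beta(\cdot) \|\,p_b(\cdot)$ out of the integral gives
\[
  p_b(t)\,\bigl\| (\alpha\conv\beta)(t) \bigr\|
  \le \int_0^t \bigl(p_b(t-s)\,\| \alpha(t-s) \|\bigr)\bigl(p_b(s)\,\| \beta(s) \|\bigr)\,\dd s
  \le \| \beta \|_{L^\infty(\R^+,p_b)} \int_0^t p_b(t-s)\,\| \alpha(t-s) \|\,\dd s .
\]
A change of variables $\tau=t-s$ bounds the last integral by $\int_0^\infty p_b(\tau)\,\| \alpha(\tau) \|\,\dd\tau=\| \alpha \|_{L^1(\R^+,p_b)}$, and since the resulting bound does not depend on $t$, taking the essential supremum over $t\in\R^+$ on the left yields $\| \alpha\conv\beta \|_{L^\infty(\R^+,p_b)}\le\| \alpha \|_{L^1(\R^+,p_b)}\,\| \beta \|_{L^\infty(\R^+,p_b)}$, as claimed.

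I do not expect a genuine obstacle here: this is the classical Young-type convolution estimate in a weighted $L^1$--$L^\infty$ pairing, and the submultiplicativity of $p_{A,b}$ is exactly the property noted immediately before the statement. The only points deserving a word of care are purely technical — the measurability of $(s,t)\mapsto\alpha(t-s)\beta(s)$ on the triangle $\{0\le s\le t\}$, which is standard for measurable integrands, and the passage from an almost-everywhere bound in $t$ to the essential supremum, which is immediate because the right-hand side is $t$-independent.
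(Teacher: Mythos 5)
Your argument is correct and is essentially the paper's proof: the paper simply notes that the result "follows directly from the submultiplicativity with Fubini's theorem" (citing an earlier lemma of the author), which is exactly the pointwise factorisation $p_b(t)\le p_b(t-s)p_b(s)$ followed by pulling out the essential supremum and changing variables that you spell out. Your preliminary remark that $p_b\le p_{A,b}$ reconciles the weights in the hypothesis and conclusion is a sensible clarification the paper leaves implicit.
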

\begin{proof}
  The result follows directly from the refined submultiplicativity,
  see \cite[Lemma~19]{dietert-2016-stability-bifurcation}.
\end{proof}

If the kernel is sufficiently decaying, then the single root of the
characteristic equation at $z=0$ must behave like a pole and can be
separated.
\begin{proposition}
  \label{thm:linear-resolvent-bound}
  Let $b \ge 0$ and $b_r > b + 5/2$. Assume that $\hat{f}_{\stat}$ is
  such that
  \begin{equation*}
    \| r_{r} \|_{p_{b_r}} < \infty
    \qquad\text{ and }\qquad
    \| r_{i} \|_{p_{b_r}} < \infty
  \end{equation*}
  and that $k_{Lc}$ satisfies the stability condition from
  Definition~\ref{def:linear-stability}. Then the resolvent $r_{Lc}$
  takes the form
  \begin{equation*}
    r_{Lc} = K_{\Theta} + r_{Lcs},
  \end{equation*}
  where $\| r_{Lcs} \|_{L^1(\R^+,p_b)} < \infty$ and $K_{\Theta}$ is a
  constant matrix.
\end{proposition}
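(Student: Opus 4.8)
The plan is to pass to Laplace transforms and invoke the Gelfand / Paley--Wiener theory of the Beurling convolution algebra $L^1(\R^+,p_b)$ (a Banach algebra, since $p_b$ is submultiplicative), following \cite{gripenberg-londen-staffans-1990-volterra}. First I would record the input regularity. By \cref{thm:decay-linear-volterra-kernel}, the hypotheses $\|r_r\|_{p_{b_r}},\|r_i\|_{p_{b_r}}<\infty$ give $k_{Lc}\in L^1(\R^+,p_{b'})$ for every $0\le b'\le b_r-\tfrac12$, so in particular, using $b_r>b+5/2$, we have $k_{Lc}\in L^1(\R^+,p_{b+2})$. Consequently $z\mapsto(\Lap k_{Lc})(z)$ is holomorphic on $\{\Re z>0\}$ and extends to a function on the closed half-plane $\{\Re z\ge0\}$ that, together with its first two derivatives, is continuous there and tends to $0$ as $|z|\to\infty$.

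Next I would put $M(z)=\Id+(\Lap k_{Lc})(z)$ and analyse $M^{-1}$. By \cref{def:linear-stability}, $\det M$ has no zero in $\{\Re z\ge0\}$ apart from a simple one at $z=0$, i.e.\ $\det M(0)=0$ but $(\det M)'(0)\ne0$; hence $M(z)^{-1}=\adjoint M(z)/\det M(z)$ is holomorphic on $\{\Re z>0\}$, smooth on $\{\Re z\ge0\}\setminus\{0\}$, and has a simple pole at the boundary point $z=0$. I would then define
\[
  K_{\Theta} := -\,\frac{\adjoint M(0)}{(\det M)'(0)},
\]
the residue of $\Lap r_{Lc}$ at $z=0$ (and, by the rotation symmetry, the rotation-eigenmode matrix of the rank-one form \eqref{eq:volterra-rotation-eigenmode-resolvent-form}, though only ``constant matrix'' is needed here). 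Recalling that the resolvent \eqref{eq:linear-resolvent-definition} is a locally integrable function with $\Lap r_{Lc}(z)=\Id-M(z)^{-1}$ on $\{\Re z>0\}$, I would consider
\[
  \Phi(z) := \Lap r_{Lc}(z) - \frac{K_{\Theta}}{z} = \Id - M(z)^{-1} - \frac{K_{\Theta}}{z}.
\]
Subtracting $K_{\Theta}/z$ removes the only singularity, so $\Phi$ is holomorphic on $\{\Re z>0\}$ and extends (with several derivatives, cf.\ below) continuously to all of $\{\Re z\ge0\}$, including the critical line and the origin; moreover $\Phi(z)\to0$ as $|z|\to\infty$, since $\Lap r_{Lc}(z)=(\Lap k_{Lc})(z)\,M(z)^{-1}\to0$ and $K_{\Theta}/z\to0$. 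A weighted Paley--Wiener theorem for $L^1(\R^+,p_b)$ then yields a unique $r_{Lcs}\in L^1(\R^+,p_b)$ with $\Lap r_{Lcs}=\Phi$; since the Laplace transform of the constant function $K_{\Theta}$ is $K_{\Theta}/z$ and the transform is injective, $r_{Lc}=K_{\Theta}+r_{Lcs}$, which is the assertion. (Equivalently, one can verify directly that $\rho:=r_{Lc}-K_{\Theta}$ solves a Volterra equation with the same kernel $k_{Lc}$ and a forcing in $L^1(\R^+,p_b)$, using $M(0)K_{\Theta}=0$, and then run the same transform argument on $\rho$.)

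The step I expect to be the main obstacle is the quantitative regularity bookkeeping underlying that final Paley--Wiener step. Extracting the simple pole at $z=0$ costs about two derivatives: one because the simple zero of $\det M$ sits in the denominator of $M^{-1}=\adjoint M/\det M$, and a second because, after pulling out $K_{\Theta}/z$, the remaining numerator vanishes at $z=0$ and has to be divided by $z$ once more. So one needs $\Lap k_{Lc}$ --- hence $M$ and $\adjoint M$ --- to be smooth of order a little above $b+2$ up to $\{\Re z\ge0\}$ with decaying derivatives, which is exactly what $k_{Lc}\in L^1(\R^+,p_{b_r-1/2})$ together with $b_r-\tfrac12>b+2$ supplies; the extra $\tfrac12$ in the hypothesis $b_r>b+5/2$ accounts for the Cauchy--Schwarz loss already incurred in \cref{thm:decay-linear-volterra-kernel}. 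A secondary point requiring care is that $z=0$ is a boundary point of the half-plane of holomorphy, so the local expansions around it must be carried out one-sidedly within the closed half-plane rather than via Laurent series.
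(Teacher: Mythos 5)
Your argument is essentially the paper's: the paper simply invokes Theorem 3.7 of Chapter 7 of Gripenberg--Londen--Staffans (equivalently Theorem 3.6 of Jordan--Staffans--Wheeler), whose content is exactly the pole extraction you outline --- $k_{Lc}\in L^1(\R^+,p_{b+2})$ makes $\Id+\Lap k_{Lc}$ ``smooth of order $2$'' in the weighted algebra, the stability condition makes $z=0$ a simple zero of the determinant, and subtracting the residue $K_{\Theta}/z$ leaves a transform in $\widehat{L^1(\R^+,p_b)}$. The only caution is that your last step is not a bare boundary-regularity Paley--Wiener statement but the Gelfand/Wiener--L\'evy theory of the Beurling algebra $L^1(\R^+,p_b)$, which is precisely what the cited theorems supply; your derivative bookkeeping (two powers of the $t$-weight lost at the simple zero) matches the paper's hypothesis $b_r>b+5/2$ exactly.
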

\begin{proof}
  We use Section 3 of Chapter 7 of
  \cite{gripenberg-londen-staffans-1990-volterra}, which applies as
  our weight is submultiplicative.

  By the assumed regularity on $\hat{f}_{\stat}$, we have that
  \begin{equation*}
    \int_0^\infty \| k_{Lc}(t) \|\, (1+t)^{b+2}\, \dd t < \infty
  \end{equation*}
  so that $k_{Lc}$ is smooth of order at least 2 in $\hat{L}_1(p_b)$
  with the Definitions 3.1 and 3.5 of Chapter 7 of
  \cite{gripenberg-londen-staffans-1990-volterra}, see also Lemma 4.3
  of \cite{jordan-staffans-wheeler-1982-local-analyticity}. By the
  condition on the derivative, $1+\Lap k_{Lc}$ has a zero of order 1
  (see Definition~3.6 of Chapter~6 of
  \cite{gripenberg-londen-staffans-1990-volterra}). Hence by the
  corresponding version of Theorem~3.7 of Chapter~7 of
  \cite{gripenberg-londen-staffans-1990-volterra} or Theorem 3.6 of
  \cite{jordan-staffans-wheeler-1982-local-analyticity} the result
  follows.
\end{proof}

We can identify $K_\Theta$ and the corresponding functinal \(\alpha\)
from \eqref{eq:stable-subspace-functional-alpha} more precisely:
\begin{lemma}
  \label{thm:resolvent-eigenmode-form}
  Assume the setup of Proposition \ref{thm:linear-resolvent-bound} and
  \begin{equation*}
    \| r_{\Theta} \|_{p_{b_r}} < \infty.
  \end{equation*}
  Then $K_{\Theta}$ can be written as
  \begin{equation*}
    K_{\Theta}
    =
    \begin{pmatrix}
      0 \\ 1
    \end{pmatrix}
    \otimes
    \begin{pmatrix}
      c_{\Theta,r} & c_{\Theta,i}
    \end{pmatrix}
  \end{equation*}
  for constants $c_{\Theta,r}$ and $c_{\Theta,i}$. Moreover, its
  kernel is determined by
  \begin{equation*}
    K_{\Theta} \; (\Id+\Lap k_{Lc}(0)) = 0
  \end{equation*}
  and
  \begin{equation*}
    \alpha(\DD \hat{R} \hat{f}_{\stat}) \not = 0.
  \end{equation*}
\end{lemma}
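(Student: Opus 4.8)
\emph{Proof strategy.} Write $M(z) := \Id + \Lap k_{Lc}(z)$, so that by \cref{def:linear-stability} the scalar function $\det M$ has a simple zero at $z=0$ and no other zero in $\{\Re z \ge 0\}$. I would proceed in three steps: (i) identify $\ker M(0)$ from the rotation eigenmode; (ii) read off the form of $K_\Theta$ from the resolvent identity; (iii) compute $\alpha(\DD\hat R\hat f_\stat)$ in closed form. For (i), set $v_0 := \DD\hat R\hat f_\stat$; then $v_0\in\sX_{p_{b_r}}$ by \cref{thm:regularity-stationary-state} (recall $(\DD\hat R\hat f_\stat)_\ell = \ii\ell(\hat f_\stat)_\ell$), $Lv_0 = 0$, the order parameter $(v_0)_1(0) = \ii(\hat f_\stat)_1(0) = \ii r_\stat$ is purely imaginary, and $r_\stat > 0$ since the locked region of $f_\stat$ is nonempty, so $v_0\ne0$. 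Running the linearised flow $\partial_t u = Lu$ from $u_\init = v_0$ gives the stationary solution $u(t)\equiv v_0$, hence the unique solution of the linearised Volterra equation \eqref{eq:volterrra-eta-linearised} with forcing $F_L(t) = (\ee^{tL_1}v_0)_1(0)$ is the constant vector $\begin{pmatrix}0\\r_\stat\end{pmatrix}$. Substituting it gives
\begin{equation*}
  \begin{pmatrix}0\\r_\stat\end{pmatrix} + \left(\int_0^t k_{Lc}(\sigma)\,\dd\sigma\right)\begin{pmatrix}0\\r_\stat\end{pmatrix} = \compF_L(t),
\end{equation*}
and letting $t\to\infty$ the left-hand side tends to $M(0)\begin{pmatrix}0\\r_\stat\end{pmatrix}$ (the integral converging absolutely by \cref{thm:decay-linear-volterra-kernel}); therefore $\compF_L(t)$ converges as well, and since $\int_0^\infty|F_L(t)|\,\dd t < \infty$ by \cref{thm:control-seminorms-by-weighted-sobolev} (as $b_r > 1/2$) and an integrable function that converges at infinity must converge to $0$, we obtain $M(0)\begin{pmatrix}0\\r_\stat\end{pmatrix} = 0$. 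Thus $\begin{pmatrix}0\\1\end{pmatrix}\in\ker M(0)$; since the zero of $\det M$ at $0$ is simple and $M(0)$ is $2\times2$, necessarily $\operatorname{rank} M(0) = 1$ (if $M(0)=0$ then $\adjoint M(0)=0$ and Jacobi's formula forces $\tfrac{\dd}{\dd z}\det M|_0 = 0$), so $\ker M(0) = \operatorname{span}\begin{pmatrix}0\\1\end{pmatrix}$ and the left null space of $M(0)$ is one-dimensional too.

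For (ii), Laplace-transforming the resolvent identity \eqref{eq:linear-resolvent-definition} gives $\Lap r_{Lc}(z) = \Id - M(z)^{-1}$ for $\Re z > 0$. By \cref{thm:linear-resolvent-bound}, $r_{Lc} = K_\Theta + r_{Lcs}$ with $r_{Lcs}\in L^1(\R^+)$, so $z\,\Lap r_{Lc}(z) = K_\Theta + z\,\Lap r_{Lcs}(z) \to K_\Theta$ as $z\to0^+$, while at the same time $z\,\Lap r_{Lc}(z) = z\,\Id - z\,\adjoint M(z)/\det M(z) \to -\adjoint M(0)/\bigl(\tfrac{\dd}{\dd z}\det M|_0\bigr)$ because the zero of $\det M$ is simple. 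Hence $K_\Theta = -\adjoint M(0)/\bigl(\tfrac{\dd}{\dd z}\det M|_0\bigr) \ne 0$. For a rank-one $2\times2$ matrix, $\adjoint M(0)$ is rank one with column space $\ker M(0)$ and row space equal to the left null space of $M(0)$; by (i) this means $\adjoint M(0) = \begin{pmatrix}0\\1\end{pmatrix}\otimes q$ for a nonzero left null (row) vector $q$ of $M(0)$. Dividing by the scalar yields $K_\Theta = \begin{pmatrix}0\\1\end{pmatrix}\otimes\begin{pmatrix}c_{\Theta,r}&c_{\Theta,i}\end{pmatrix}$ with $\begin{pmatrix}c_{\Theta,r}&c_{\Theta,i}\end{pmatrix}$ a nonzero multiple of $q$, and consequently $K_\Theta M(0) = \begin{pmatrix}0\\1\end{pmatrix}\otimes\bigl(\begin{pmatrix}c_{\Theta,r}&c_{\Theta,i}\end{pmatrix}M(0)\bigr) = 0$, which is the stated characterisation of the kernel.

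For (iii), the identity obtained in (i) shows that, written as a real $2$-vector, $(\ee^{tL_1}v_0)_1(0) = \compF_L(t) = \bigl(\Id + \int_0^t k_{Lc}\bigr)\begin{pmatrix}0\\r_\stat\end{pmatrix}$, which by $M(0)\begin{pmatrix}0\\r_\stat\end{pmatrix} = 0$ equals $-\bigl(\int_t^\infty k_{Lc}\bigr)\begin{pmatrix}0\\r_\stat\end{pmatrix}$. Plugging this into the definition \eqref{eq:stable-subspace-functional-alpha} of $\alpha$, interchanging the two time integrations (Fubini is justified because $\int_0^\infty\sigma\,\|k_{Lc}(\sigma)\|\,\dd\sigma < \infty$ by \cref{thm:decay-linear-volterra-kernel}, as $b_r > 3/2$), and using $\int_0^\infty\sigma\,k_{Lc}(\sigma)\,\dd\sigma = -(\Lap k_{Lc})'(0) = -M'(0)$, I obtain
\begin{equation*}
  \alpha(\DD\hat R\hat f_\stat) = \begin{pmatrix}c_{\Theta,r}&c_{\Theta,i}\end{pmatrix} M'(0) \begin{pmatrix}0\\r_\stat\end{pmatrix}.
\end{equation*}
Finally Jacobi's formula gives $\tfrac{\dd}{\dd z}\det M|_0 = \operatorname{tr}\bigl(\adjoint M(0)\,M'(0)\bigr) = q\,M'(0)\begin{pmatrix}0\\1\end{pmatrix}$, which is nonzero by \cref{def:linear-stability}; since $\begin{pmatrix}c_{\Theta,r}&c_{\Theta,i}\end{pmatrix}$ is a nonzero multiple of $q$ and $r_\stat > 0$, this yields $\alpha(\DD\hat R\hat f_\stat)\ne0$ (in fact one gets $\alpha(\DD\hat R\hat f_\stat) = -r_\stat$).

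The step I expect to be the main obstacle is (i): correctly matching the rotation eigenmode with the null vector of $M(0)$ and making the $t\to\infty$ limit in the Volterra identity rigorous, which relies on the integrability of the linear forcing from \cref{thm:control-seminorms-by-weighted-sobolev} together with the elementary fact that an integrable function which converges at infinity converges to $0$. The rest is routine: the $2\times2$ adjugate and Jacobi bookkeeping relating the left and right null vectors of $M(0)$ to the simplicity of the zero of $\det M$, plus a single Fubini interchange.
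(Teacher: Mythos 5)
Your proof is correct, and it passes through the same landmarks as the paper's argument (the rotation eigenmode with constant order parameter $\ii r_\stat$, the $t\to\infty$ limit, the Laplace transform of the resolvent identity, and the simplicity of the zero of $\det M$ at $z=0$), but the execution is genuinely more explicit at two points. The paper works with the \emph{solution formula} $\eta = \compF_{L,\Theta} - r_{Lc}\conv\compF_{L,\Theta}$ and the splitting $r_{Lc}=K_\Theta+r_{Lcs}$: letting $t\to\infty$ there directly yields $\begin{pmatrix}0\\1\end{pmatrix}\in\range K_\Theta$ and, once the rank-one form is known, $\alpha(\DD\hat R\hat f_\stat)\ne0$ comes for free from the same identity; the rank-one form itself is obtained from $M(0)K_\Theta=K_\Theta M(0)=0$ together with $M(0)\ne0$. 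You instead substitute the constant solution into the Volterra \emph{equation} to get $\begin{pmatrix}0\\1\end{pmatrix}\in\ker M(0)$, then compute $K_\Theta=-\adjoint M(0)/\bigl(\tfrac{\dd}{\dd z}\det M|_0\bigr)$ as an Abelian limit of $z\Lap r_{Lc}(z)$, read off the dyadic form from the adjugate of a rank-one $2\times2$ matrix, and finally evaluate $\alpha(\DD\hat R\hat f_\stat)=-r_\stat$ by a Fubini interchange and Jacobi's formula. Your route costs more linear-algebra bookkeeping but buys closed-form expressions for $K_\Theta$ and for $\alpha(\DD\hat R\hat f_\stat)$ (the paper's normalisation gives the same value, $-r_\stat$ up to the choice of scaling of the eigenmode). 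Two minor remarks: the hypothesis $\|r_\Theta\|_{p_{b_r}}<\infty$ already gives $\DD\hat R\hat f_\stat\in\sX_{p_{b_r}}$ since the two are proportional, so the detour through \cref{thm:regularity-stationary-state} (which would require control of $\hat g$ in a higher weight) is unnecessary; and both your argument and the paper's implicitly use $r_\stat>0$, which is indeed the standing assumption for a partially locked state.
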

\begin{proof}
  Consider the rotation eigenmode and its forcing
  \begin{equation*}
    F_{L,\Theta}(t) = (\ee^{t L_1} \DD \hat{R} \hat{f}_{\stat})_1(0),
  \end{equation*}
  which is decaying as $(1+t)^{-b_r}$ by Lemma~\ref{thm:evolution-l1-b1}
  and Lemma~\ref{thm:control-seminorms-by-weighted-sobolev}, because
  $\DD \hat{R} f_{\stat}$ is proportional to $r_{\Theta}$. As it is an
  eigenmode, the order parameter is the constant
  $(\DD \hat{R} f_{\stat})_1(0) = \ii r_{\stat}$, so that
  \begin{equation*}
    \begin{pmatrix}
      0 \\ 1
    \end{pmatrix}
    = \compF_{L,\Theta} - r_{Lc} \conv F_{L,\Theta}
    = \compF_{L,\Theta} - K_{\Theta} \conv F_{L,\Theta} - r_{Lcs}
    \conv F_{L,\Theta}.
  \end{equation*}
  By Lemma \ref{thm:young-inequality}, the term
  $r_{Lcs} \conv F_{L,\Theta}$ is also vanishing as $t\to
  \infty$. Therefore we find from the limit $t\to \infty$ that
  \begin{equation}
    \label{eq:range-k-theta-rotation}
    \begin{pmatrix}
      0 \\ 1
    \end{pmatrix}
    = - \int_0^\infty K_{\Theta} \compF_{L,\Theta}(t) \dd t.
  \end{equation}
  This shows that
  \begin{equation*}
    \begin{pmatrix}
      0 \\ 1
    \end{pmatrix}
    \in \range K_{\Theta}.
  \end{equation*}

  Taking the Laplace transform of
  \eqref{eq:linear-resolvent-definition} shows that
  \begin{equation*}
    (\Id+\Lap k_{Lc}(0))\; K_{\Theta} = 0
    \qquad\text{ and }\qquad
    K_{\Theta}\; (\Id+\Lap k_{Lc}(0)) = 0.
  \end{equation*}
  The stability condition implies that $\Id+\Lap k_{Lc}(0) \not = 0$
  because otherwise $\det (\Id+\Lap k_{Lc}(z))$ would have a root of
  order at least two. Therefore, the range of $K_{\Theta}$ must be
  one-dimensional and $K_{\Theta}$ takes the given form. Moreover,
  \eqref{eq:range-k-theta-rotation} then implies that
  $\alpha(\DD \hat{R} \hat{f}_{\stat}) \not = 0$.

\end{proof}

In this setting, we find for a forcing $\compF_{L}$ from the
complexification of by $F_{L}$ through
\begin{equation*}
  \compF_{L} =
  \begin{pmatrix}
    \Re F_L \\ \Im F_L
  \end{pmatrix}
\end{equation*}
that
\begin{equation*}
  K_{\Theta} \conv \compF_{L}(t)
  =
  \begin{pmatrix}
    0 \\ 1
  \end{pmatrix}
  \int_0^t
  \Big(c_{\Theta,r} \Re F_{L}(s) + c_{\Theta,i} \Im F_{L}(s)\Big)\, \dd s.
\end{equation*}
By the solution formula \eqref{eq:linear-volterra-resolvent-solution},
the order parameter can therefore only decay if
\begin{equation*}
  \int_0^\infty
  \Big(c_{\Theta,r} \Re F_{L}(t) + c_{\Theta,i} \Im F_{L}(t)\Big)\, \dd t
  = 0,
\end{equation*}
which motivates the definition of $\alpha$ in
\eqref{eq:stable-subspace-functional-alpha}. Precisely, we find:
\begin{lemma}
  \label{thm:alpha-control-integral-imaginary}
  Let $E = L_1$ and $\| u_{\init} \|_{p_b} < \infty$ for $b > 1/2$ or
  let $E = B$ and $\| u_{\init} \|_{p_b} < \infty$ and
  $\| r_{\Theta} \|_{p_b} < \infty$ for $b > 3/2$. Then for
  $t \in \R^+$ it holds that
  \begin{equation*}
    \alpha(\sol{E}0t u_{\init}) + \int_0^t
    \Big(c_{\Theta,r} \Re (\sol E0s u_{\init})_1(0)
    + c_{\Theta,i} \Im (\sol E0s u_{\init})_1(0)\Big)
    \, \dd s
    = \alpha(u_{\init}).
  \end{equation*}
\end{lemma}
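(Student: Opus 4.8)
The plan is to exploit that the functional $\alpha$ in \eqref{eq:stable-subspace-functional-alpha} is, by construction, the time integral along the free semigroup $\ee^{sL_1}$ of the order‑parameter functional
\[
  \gamma(w) := c_{\Theta,r}\,\Re w_1(0) + c_{\Theta,i}\,\Im w_1(0).
\]
By \cref{thm:control-seminorms-by-weighted-sobolev} the seminorm $\beta_{\alpha}$, hence $\alpha$, is bounded on $\sX_{p_b}$ for $b>1/2$, and by the Sobolev embedding so is $\gamma$; thus all quantities in the statement are finite, and the claim says that $t\mapsto\alpha(\sol E0t u_\init)+\int_0^t\gamma(\sol E0s u_\init)\,\dd s$ is constant, i.e.\ the value of $\alpha$ plus the accumulated order parameter is conserved by the linear flow.

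For $E=L_1$ this is a change of variables. Using the semigroup property and substituting $\sigma=s+t$ in the integral defining $\alpha$, for any $v$ with $\|v\|_{p_b}<\infty$, $b>1/2$,
\[
  \alpha(\ee^{tL_1}v)=\int_0^\infty\gamma\!\left(\ee^{(s+t)L_1}v\right)\dd s=\int_t^\infty\gamma\!\left(\ee^{\sigma L_1}v\right)\dd\sigma=\alpha(v)-\int_0^t\gamma\!\left(\ee^{sL_1}v\right)\dd s,
\]
which is the assertion for $E=L_1$ since $\sol{L_1}0s=\ee^{sL_1}$. In differentiated form this reads $\alpha(L_1 w)=-\gamma(w)$, i.e.\ the representative of $\alpha$ is a generalised eigenfunction of $L_1^{*}$ with $L_1^{*}$‑image $-\gamma$; I will keep it in the integrated form just displayed, since $L_1 w$ need not lie in the domain of $\alpha$.

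For $E=B$ the idea is that $\frac{\dd}{\dd t}\alpha(u(t))=\alpha(Bu(t))=\alpha(L_1 u(t))=-\gamma(u(t))$ for $u(t)=\sol B0t u_\init$, which integrates to the claim. The middle equality rests on the fact that $\alpha$ annihilates the range of $B_{1n}+B_2$: since $r_\Theta=\DD\hat{R}\hat{f}_{\stat}/\alpha(\DD\hat{R}\hat{f}_{\stat})$ with $\alpha(\DD\hat{R}\hat{f}_{\stat})\neq0$ by \cref{thm:resolvent-eigenmode-form}, we have $\alpha(r_\Theta)=1$, whence $\alpha\!\left((B_{1n}+B_2)w\right)=\alpha(B_{1n}w)\bigl(1-\alpha(r_\Theta)\bigr)=0$ (finiteness of $\alpha(B_{1n}w)$ via $\beta_d$ needs $b>3/2$, and $\|r_\Theta\|_{p_b}<\infty$ is used here). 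To make the computation rigorous I would write, by Duhamel's principle with respect to the free flow, $u(t)=\ee^{tL_1}u_\init+\int_0^t\ee^{(t-\tau)L_1}h(\tau)\,\dd\tau$ with $h(\tau):=(B_{1n}+B_2)u(\tau)$ (the solution $u$ exists in $C_w([0,T],\sX_{p_b})$ by \cref{thm:time-evolution-b}); applying $\alpha$ and the $L_1$ identity above to the free part and to each slice $\ee^{(t-\tau)L_1}h(\tau)$, the instantaneous terms $\alpha(h(\tau))$ vanish, leaving $\alpha(u(t))=\alpha(u_\init)-\int_0^t\gamma(\ee^{sL_1}u_\init)\,\dd s-\int_0^t\!\int_0^{t-\tau}\gamma(\ee^{\sigma L_1}h(\tau))\,\dd\sigma\,\dd\tau$. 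The same Duhamel formula evaluated through $\gamma(\,\cdot\,)$ gives $\gamma(u(s))=\gamma(\ee^{sL_1}u_\init)+\int_0^s\gamma(\ee^{(s-\tau)L_1}h(\tau))\,\dd\tau$, and interchanging the order of integration over $0\le\tau\le s\le t$ (with $\sigma=s-\tau$) shows that $\int_0^t\gamma(u(s))\,\dd s$ equals precisely the two subtracted terms; adding it back yields the identity. Equivalently, one may test the weak formulation $\partial_t u=Bu$ against the generalised eigenfunction representing $\alpha$, for which $B^{*}\alpha=L_1^{*}\alpha+(B_{1n}+B_2)^{*}\alpha=-\gamma$, and integrate in $t$.

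The main obstacle is the functional‑analytic justification of moving $\alpha$ through the $\tau$‑integral in the $E=B$ step: the operator $B_{1n}$, hence $B$, is unbounded on $\sX_{p_b}$ because the factor $\ell$ costs a full degree in the $\sX$‑scale, so $h(\tau)$ only lies in a space of lower spatial regularity, and $\ee^{(t-\tau)L_1}h(\tau)$ returns to $\sX_{p_b}$ only for $\tau<t$, with a mildly singular behaviour as $\tau\uparrow t$. This is exactly the mechanism used in the proof of \cref{thm:beta-alpha-control-regularity-induction}: the free transport $\ee^{\sigma L_1}$ regularises in the spatial modes at the cost of a power of $\xi$, which makes $\tau\mapsto\ee^{(t-\tau)L_1}h(\tau)$ integrable into $\sX_{p_b}$, so the interchange with the bounded functional $\alpha$ is legitimate. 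An alternative, elementary route is to replace $\alpha$ by the truncation $\alpha_T(w)=\int_0^T\gamma(\ee^{sL_1}w)\,\dd s$, which for each $T$ is a genuinely bounded functional with an explicit kernel, run the computation for $\alpha_T$ (which produces a harmless extra boundary term $\int_T^{T+t}\gamma(\ee^{sL_1}v)\,\dd s$), and let $T\to\infty$; the tail $\int_T^\infty\gamma(\ee^{sL_1}\,\cdot\,)\,\dd s$ vanishes uniformly on bounded subsets of $\sX_{p_b}$ by \cref{thm:propagation-l1-b1} and Cauchy--Schwarz since $b>1/2$.
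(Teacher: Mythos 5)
Your proposal is correct and follows essentially the same route as the paper, whose entire proof is the observation that $\frac{\dd}{\dd t}\alpha(\sol E0t u_{\init})=\alpha[L_1(\sol E0t u_{\init})]$ — i.e.\ $\alpha$ annihilates $B_{1n}+B_2$ because $\alpha(r_\Theta)=1$, and $\alpha(L_1\,\cdot\,)$ equals minus the order-parameter functional by the semigroup shift in the definition of $\alpha$. You have simply made explicit (via the Duhamel/Fubini computation and the truncation of $\alpha$) the justifications the paper leaves implicit.
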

\begin{proof}
  Note that
  \begin{equation*}
    \frac{\dd}{\dd t} \alpha( \sol{E}0t u_{\init} )
    = \alpha[L_1 (\sol{E}0t u_{\init})],
  \end{equation*}
  from where the result follows.
\end{proof}
For this kind of forcing we can therefore formulate the following
corollary, where we control \(\alpha\) by the seminorm
\(\beta_{\alpha}\) (recall \eqref{eq:def-seminorm-ba} for the
definition).
\begin{corollary}
  \label{thm:forcing-control-eigenmode-b-alpha}
  Let $E = L_1$ and $\| u_{\init} \|_{p_b} < \infty$ for $b > 1/2$ or
  let $E = B$ and $\| u_{\init} \|_{p_b} < \infty$ for $b >
  3/2$. Furthermore, assume the setup of
  Lemma~\ref{thm:resolvent-eigenmode-form}. If $\alpha(u_{\init}) = 0$,
  then the forcing
  \begin{equation*}
    \compF(t) =
    \begin{pmatrix}
      \Re F(t) \\
      \Im F(t)
    \end{pmatrix}
    \qquad\text{with}\qquad
    F(t) = (\sol E0t u_\init)_1(0)
  \end{equation*}
  satisfies for $t \ge 0$ that
  \begin{equation*}
    |(K_{\Theta} \conv F)(t)|
    \le \| K_{\Theta} \| \, \beta_{\alpha}(\sol E0t u_{\init}).
  \end{equation*}
\end{corollary}
\begin{proof}
  By the previous lemma, we can estimate
  \begin{equation*}
    \begin{split}
      \left|
        \int_0^t
        \Big(c_{\Theta,r} \Re (\sol E0s u_{\init})_1(0)
        + c_{\Theta,i} \Im (\sol E0s u_{\init})_1(0)\Big)
        \, \dd s
      \right|
      &= |\alpha(\sol E0t u_{\init})| \\
      &\le \| K_{\Theta} \| \,\beta_{\alpha}(\sol E0t u_{\init}),
    \end{split}
  \end{equation*}
  where the control by \(\beta_{\alpha}\) follows directly from the
  definition of \(\alpha\) and \(\beta_{\alpha}\).
\end{proof}
The contribution of the stable part $r_{Lcs}$ can easily be controlled
by Lemma~\ref{thm:young-inequality}.

\section{Nonlinear forcing and Volterra kernel}
\label{sec:nonlinear-forcing}

In this section we study the evolution under the time-dependent
operator $B$ and the difference to the evolution $L_1$ of the
linearised dynamics in order to estimate the deviation in the Volterra
kernel.

By the seminorm \(\beta_d\), we can bound the effect of \(B_2\) and
introduce \(B\).

\begin{proof}[Proof of Lemma \ref{thm:time-evolution-b}]
  Recall that \(B_2(w) = - \alpha(B_{1n} w)\, r_{\Theta}\) so that by
  the Definition~\eqref{eq:def-b1n} of \(B_{1n}\) the bootstrap
  assumption~\eqref{eq:bootstrap-def-d} implies
  \begin{equation*}
    |\alpha(w)|
    \le M_d(t)\; \| K_{\Theta} \|\; \beta_d(w).
  \end{equation*}

  The bound of $\beta_d$ from
  Lemma~\ref{thm:control-seminorms-by-weighted-sobolev} then implies
  that the operator $B_2$ is a bounded operator. Hence the evolution has a
  unique solution given by Duhamel's principle as
  \begin{equation}
    \label{eq:duhamel-b1-b}
    \sol Bst v  = \sol{B_1}st v + \int_s^t \sol{B_1}{\tau}{t} B_2(\sol
    Bs\tau v)\, \dd \tau.
  \end{equation}
  The real linearity follows directly from the real linearity of
  \(B_1\) and \(B_2\).

  The first estimates follow directly from this Duhamel representation
  using Lemma~\ref{thm:evolution-l1-b1}.

  Finally, under the assumption $b_r > b+1$ or $b_d > 1$ we find that
  \begin{equation*}
    \begin{split}
      &(1{+}t{-}s)^{\frac 32 - b} \int_s^t (1{+}t{-}\tau)^{\frac 32 - b_r}
      (1{+}\tau)^{-b_d} (1{+}\tau{-}s)^{\frac 32 - b}\, \dd \tau \\
      &\le
      \int_s^t (1{+}t{-}\tau)^{b - b_r} (1{+}\tau)^{-b_d}\, \dd \tau
    \end{split}
  \end{equation*}
  is uniformly bounded by a constant. Moreover, $\beta_d$ varies
  continuously by the weak-continuity. Hence by a bootstrap argument,
  we can find $\delta_R$ such that the claimed control holds.
\end{proof}

To control the behaviour under the time-dependent evolution \(B\), we
start with a simple lemma controlling the norm.
\begin{lemma}
  \label{thm:control-norm-evolution}
  Let $b > 3/2$, $b_d \ge 0$ and $b_r \ge b$ with
  $\| r_{\Theta} \|_{p_{b_r}} < \infty$. Assume that $b_r > b+1$ or
  $b_d > 1$, then there exist constants $\delta_R$ and $C$ such that
  \begin{equation*}
    \| \sol Est v \|_{p_{1+t-s,b}}
    \le C (1+t-s)^{\bar{b}_d} \| v \|_{p_{b}}
    \quad\text{ with }\quad
    \bar{b}_d = \max\left\{0,\frac 32 - b_d\right\}
  \end{equation*}
  for $E = L_1$ or $E = B$ if $R_d(t) \le \delta_R$.
\end{lemma}
\begin{proof}
  The case $E=L_1$ follows directly from Lemma~\ref{thm:evolution-l1-b1}.

  In the case $E=B$, we can chose $\delta_R$ small enough to apply
  Lemma \ref{thm:time-evolution-b} in order to control
  $\beta_d(\sol Est v)$.  By Duhamel's
  principle~\eqref{eq:duhamel-b1-b}, Lemma~\ref{thm:evolution-l1-b1}
  implies
  \begin{equation*}
    \begin{split}
      &\| \sol Bst v \|_{p_{1+t-s,b}} \\
      &\le \| v \|_{p_b}
      + 2 C_{\beta d} R_d(t) \int_s^t (1+\tau)^{-b_d} (1+\tau-s)^{\frac
        32 -b}\, \| v \|_{p_b} \| \sol B\tau t r_{\Theta}
      \|_{p_{1+t-s,b}}\, \dd \tau.
    \end{split}
  \end{equation*}
  By using Lemma~\ref{thm:evolution-l1-b1} again, we bound
  \begin{equation*}
    \begin{split}
      \| \sol B\tau t r_{\Theta} \|_{p_{1+t-s,b}}
      &\le (1+t-\tau)^{b-b_r} (1+\tau-s)^{b}\,
      \| \sol B\tau t r_{\Theta} \|_{p_{1+t-\tau,b_r}} \\
      &\le (1+t-\tau)^{b-b_r} (1+\tau-s)^{b}\, \| r_{\Theta} \|_{p_{b_r}}.
    \end{split}
  \end{equation*}
  Plugging in this bound gives the claimed result.
\end{proof}

We need to control the difference of the kernel elements under the
transport \(L_1\) and \(B\). As the difference is an unbounded
operator loosing regularity, we want to use that \(r_{r}\) and \(r_i\)
have more regularity.  Using the transport part, we propagate the
higher regularity.
\begin{lemma}
  \label{thm:higher-regularity-control}
  Let $b_0 \ge 0$, $b_d \ge 0$, $b \ge b_0+1/2$ with $b > 3/2$ and
  $b_r \ge b$ with
  \begin{equation*}
    \| r_{\Theta} \|_{p_{b_r}} < \infty
    \qquad\text{and}\qquad
    \| r_{\Theta} \|_{p_{b_r-\frac 12},0} < \infty.
  \end{equation*}
  Assume that one of the following conditions holds
  \begin{itemize}
  \item $b_d > 5/2$; or
  \item $b_r > b+1$ and
    \begin{equation*}
      b > b_0 + \frac 74 + \frac{\bar{b}_d - b_d}{2}
    \end{equation*}
    with $\bar{b}_d$ from Lemma \ref{thm:control-norm-evolution}.
  \end{itemize}
  Then there exist constants $\delta_R$ and $C$ such that
  \begin{equation*}
    \| \sol Est v \|_{p_{1+t-s,b_0},0}
    \le C ( \| v \|_{p_{b_0},0} + \| v \|_{p_b} )
  \end{equation*}
  for $E = L_1$ and $E = B$ if $R_d(t) \le \delta_R$.
\end{lemma}
\begin{proof}
  We present the proof for $E = B$ which is the more difficult
  part. The proof for $E = L_1$ follows from dropping the additional
  terms.

  We start with showing a $L^2$ in time control. For this note that
  \begin{equation*}
    \begin{split}
      &\frac{\dd}{\dd t}
      \| \sol Bst v \|^2_{p_{1+\frac{t-s}{2},b_0+\frac 12}} \\
      &\le - \left(b{+}\frac 12\right)
      \| \sol Bst v \|^2_{p_{1+\frac{t-s}{2},b_0},0} \\
      &\quad+ 2 \| K_\Theta\|\, R_d(t)\, (1{+}t)^{-b_d} \beta_d(\sol Bst v)\,
      \| r_{\Theta} \|_{p_{1+\frac{t-s}{2},b_0+\frac 12}}
      \| \sol Bst v \|_{p_{1+\frac{t-s}{2},b_0+\frac 12}}.
    \end{split}
  \end{equation*}
  By choosing $\delta_R$ small enough to apply Lemma \ref{thm:control-norm-evolution}, we
  control the growth term as
  \begin{align*}
    &2\| K_\Theta\|\,R_d(t)\, (1{+}t)^{-b_d} \beta_d(\sol Bst v)
    \| r_{\Theta} \|_{p_{1+\frac{t-s}{2},b_0+\frac 12}}
    \| \sol Bst v \|_{p_{1+\frac{t-s}{2},b_0+\frac 12}}\\
    &\le C \| K_\Theta\|\,R_d(t)\, (1{+}t)^{-b_d} (1{+}t{-}s)^{\frac 32-b}
    (1{+}t{-}s)^{b_0+\frac 12} (1{+}t{-}s)^{b_0+\frac 12 -b + \bar{b}_d}
    \| v \|^2_{p_b}
  \end{align*}
  for a constant $C$ with $\bar{b}_d$ from
  Lemma \ref{thm:control-norm-evolution} if $R_d(t) \le \delta_R$. By the
  assumptions, the term is integrable, so that there exists a constant
  $C$ such that
  \begin{align*}
    \int_s^t \| \sol Bs\tau v \|^2_{p_{1+t-s,b_0},0}\, \dd \tau
    \le 2^{2b_0+1} \int_s^t \| \sol Bs\tau v
    \|^2_{p_{1+\frac{t-\tau}{2},b_0},0}\, \dd \tau
    \le C \| v \|^2_{p_b}
  \end{align*}
  if $R_d(t) \le \delta_R$.

  By adapting the estimate from the proof of
  Lemma~\ref{thm:beta-alpha-control-regularity-induction}, there
  exists a constant $C$ such that
  \begin{align*}
    \frac{\dd}{\dd t} \| \sol Bst v \|^2_{p_{1+t-s,b_0},0}
    &\le C\, (r_{\stat} + |\eta(t)|)\,
    \| \sol Bst v \|^2_{p_{1+t-s,b_0},0}\\
    &+ \| K_{\Theta} \| R_d(t)\, (1+t)^{-b_d} \beta_d(\sol Bst v)\,
    \| r_{\Theta} \|_{p_{1+t-s,b_0},0}\, \| \sol Bst v \|_{p_{1+t-s,b_0},0}.
  \end{align*}
  In the second term $\beta_d(\sol Bst v)$ is controlled by
  $2C_{\beta d} (1+t-s)^{\frac 32-b} \| v \|_{p_b}$ using
  Lemma~\ref{thm:time-evolution-b}. Hence the second term can be
  controlled with a constant $C$ as
  \begin{equation*}
    \begin{split}
      &R_d(t)\, (1+t)^{-b_d} \beta_d(\sol Bst v)\,
      \| r_{\Theta} \|_{p_{1+t-s,b_0},0} \| \sol Bst v
      \|_{p_{1+t-s,b_0},0} \\
      &\le R_d(t)\, \| \sol Bst v \|^2_{p_{1+t-s,b_0},0} \\
      &\quad + C R_d(t)\, (1+t)^{-2b_d} (1+t-s)^{3-2b}
      \| v \|^2_{p_b} (1+t-s)^{2b_0} \| r_{\Theta} \|^2_{p_{b_0},0}.
    \end{split}
  \end{equation*}
  In both cases, the second term is uniformly integrable. This follows
  form the exponents as, in the case $b_d > 5/2$, we find
  \begin{equation*}
    -2b_d + 3 - 2b + 2b_0 \le -2_b + 2 = -3
  \end{equation*}
  and in the other case
  \begin{equation*}
    -2b_d + 3 - 2b + 2b_0
    < -\frac 12 - \bar{b}_d - b_d \le -2.
  \end{equation*}
  Therefore, there exists a constant constant $C$ such that
  \begin{equation*}
    \| \sol Bst v \|^2_{p_{1+t-s,b_0},0}
    \le C \| v \|^2_{p_{b},0}
    + C \int_s^t \| \sol Bs\tau v \|^2_{p_{1+t-s,b_0},0}\, \dd \tau.
  \end{equation*}
  The first control then shows the claimed result.
\end{proof}

For the difference between $\sol Bst$ and $\sol{L_1}st$, we adapt
Lemma~\ref{thm:alpha-control-integral-imaginary}.

\begin{lemma}
  \label{thm:k-q-stable-subspace}
  Let $b > 3/2$ and $\| r_{\Theta} \|_{p_b} < \infty$ and
  $\| v \|_{p_b} < \infty$. Then for $t \ge s$ it holds that
  \begin{equation*}
    \alpha(\sol Bst v - \sol{L_1}st v)
    +
    \int_0^t
    \Big(c_{\Theta,r} \Re (\sol Bst v {-} \sol{L_1}st v)_1(0)
    + c_{\Theta,i} \Im (\sol Bst v {-} \sol{L_1}st v)_1(0)\Big)
    \, \dd s
    = 0
  \end{equation*}
\end{lemma}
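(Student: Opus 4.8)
The plan is to mimic the proof of \cref{thm:alpha-control-integral-imaginary}, apply it to $E=B$ and to $E=L_1$ separately, and subtract the two resulting identities. I would first record the algebraic identity $\alpha(Bw)=\alpha(L_1 w)$, valid for every $w$ with $\|w\|_{p_b}<\infty$, $b>3/2$, and for every value of the time-dependent coefficients of $B$: writing $Bw=L_1 w+B_{1n}w+B_2 w$ with $B_2 w=-\alpha(B_{1n}w)\,r_{\Theta}$, and using that $\alpha$ is $\R$-linear and $\alpha(B_{1n}w)$ is a real scalar, one gets $\alpha(B_2 w)=-\alpha(B_{1n}w)\,\alpha(r_{\Theta})$; since $r_{\Theta}=\DD\hat R\hat f_{\stat}/\alpha(\DD\hat R\hat f_{\stat})$ by definition, we have $\alpha(r_{\Theta})=1$, whence $\alpha(B_{1n}w)+\alpha(B_2 w)=0$. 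Here $b>3/2$ keeps $\alpha(B_{1n}w)$ finite by \cref{thm:control-seminorms-by-weighted-sobolev}, and $\|r_{\Theta}\|_{p_b}<\infty$, $\|v\|_{p_b}<\infty$ make $\sol Bst$ well defined on $\sX_{p_b}$ by \cref{thm:time-evolution-b} with $b_r=b$.

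Next I would differentiate $t\mapsto\alpha(\sol Est v)$ as in \cref{thm:alpha-control-integral-imaginary}. Using $\partial_t\sol Est v=E(t)\sol Est v$ together with $\alpha(E(t)w)=\alpha(L_1 w)$ (trivial for $E=L_1$, the identity above for $E=B$), this gives $\frac{\dd}{\dd t}\alpha(\sol Est v)=\alpha(L_1\sol Est v)$. From the definition \eqref{eq:stable-subspace-functional-alpha} of $\alpha$, substituting $\ee^{\tau L_1}L_1 w=\partial_\tau\ee^{\tau L_1}w$ and integrating by parts in $\tau$ (the boundary term at $\tau=\infty$ vanishing because $(\ee^{\tau L_1}w)_1(0)\to0$ by \cref{thm:propagation-l1-b1} and \cref{thm:control-seminorms-by-weighted-sobolev}), I would obtain
\begin{equation*}
  \alpha(L_1 w)=-\big(c_{\Theta,r}\,\Re w_1(0)+c_{\Theta,i}\,\Im w_1(0)\big),
\end{equation*}
and therefore, for both $E=L_1$ and $E=B$,
\begin{equation*}
  \frac{\dd}{\dd t}\alpha(\sol Est v)=-\Big(c_{\Theta,r}\,\Re(\sol Est v)_1(0)+c_{\Theta,i}\,\Im(\sol Est v)_1(0)\Big).
\end{equation*}

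Finally I would subtract this last display for $E=B$ and $E=L_1$; since $\sol Bss v=\sol{L_1}ss v=v$ we have $\alpha(\sol Bss v-\sol{L_1}ss v)=0$, so integrating from $s$ to $t$ gives
\begin{multline*}
  \alpha(\sol Bst v-\sol{L_1}st v) \\
  =-\int_s^t\Big(c_{\Theta,r}\,\Re(\sol Bs\sigma v-\sol{L_1}s\sigma v)_1(0)+c_{\Theta,i}\,\Im(\sol Bs\sigma v-\sol{L_1}s\sigma v)_1(0)\Big)\,\dd\sigma,
\end{multline*}
which is the assertion, the lower limit being replaceable by $0$ because the integrand vanishes for $\sigma\le s$. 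I expect the only genuinely delicate point to be the same one as in \cref{thm:alpha-control-integral-imaginary}: justifying the differentiation of $\alpha(\sol Est v)$ in $t$ given only the weak continuity of the solution, and establishing the decay $(\ee^{\tau L_1}w)_1(0)\to0$ needed for the boundary term; both should follow from the same approximation and weak-compactness arguments used in the earlier proofs.
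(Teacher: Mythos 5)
Your proposal is correct and follows exactly the route the paper takes: the paper's proof consists of the single identity $\frac{\dd}{\dd t}\alpha(\sol Bst v-\sol{L_1}st v)=\alpha[L_1(\sol Bst v-\sol{L_1}st v)]$ "from where the result follows," and your argument supplies precisely the omitted details (the cancellation $\alpha(B_{1n}w)+\alpha(B_2w)=0$ via $\alpha(r_\Theta)=1$, the integration by parts giving $\alpha(L_1w)=-(c_{\Theta,r}\Re w_1(0)+c_{\Theta,i}\Im w_1(0))$, and the integration from $s$ to $t$ with vanishing initial value).
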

\begin{proof}
  As in Lemma \ref{thm:alpha-control-integral-imaginary}, note that
  \begin{equation*}
    \frac{\dd}{\dd t} \alpha(\sol Bst v - \sol{L_1}st v)
    = \alpha\left[
      L_1(\sol Bst v - \sol{L_1}st v)
      \right]
  \end{equation*}
  from where the result follows again.
\end{proof}

Now we prove the final control for the difference of the Volterra
kernel for the linearised evolution and the full evolution. Using the
previous Lemma~\ref{thm:higher-regularity-control}, moreover, the
difference is not adding a non-decaying contribution in the rotation
eigenmode, which is quantified by $K_{\Theta} \conv k_Q$.

\begin{lemma}
  \label{thm:control-k-q}
  Let $b_{\eta} \ge 0$, $b_d \ge 0$, and $b_r > b_{\eta}+2$ with
  \begin{equation*}
    \| r_{r} \|_{p_{b_r}} < \infty,\qquad
    \| r_{i} \|_{p_{b_r}} < \infty,\qquad
    \| r_{\Theta} \|_{p_{b_r}} < \infty,
  \end{equation*}
  and
  \begin{equation*}
    \| r_{r} \|_{p_{b_r-\frac 12},0} < \infty,\qquad
    \| r_{i} \|_{p_{b_r-\frac 12},0} < \infty,\qquad
    \| r_{\Theta} \|_{p_{b_r-\frac 12},0} < \infty.
  \end{equation*}
  Additionally assume one of the following conditions
  \begin{itemize}
  \item $b_d > 5/2$,
  \item $b_r > b_{\eta} + 4 + \max\{0,1-b_d\}$ and
    ${b_r > b_{\eta} + \frac{17}{4} + \max\{0,1-b_d\} +
    \frac{\max\left\{0,\frac 32 - b_d\right\}}{2} - \frac{b_d}{2}}$.
  \end{itemize}
  Then there exist constant $\delta_R$ and $C_Q$ such that for
  $J=[0,t]$ the deviation $k_Q = k - k_L$ is controlled as
  \begin{equation*}
    \tnorm{k_Q}_{L^\infty(J,p_{b_\eta})} \le C_Q \sqrt{R_d(t)}
  \end{equation*}
  and
  \begin{equation*}
    \tnorm{K_{\Theta} \conv k_Q}_{L^\infty(J,p_{b_\eta})} \le C_Q \sqrt{R_d(t)}
  \end{equation*}
  if $R_d(t) \le \delta_R$.
\end{lemma}
\begin{remark}
  The square root dependency of the bound on $R_d(t)$ can be
  improved. However, for the purpose of controlling the deviation it
  is sufficient.
\end{remark}
\begin{proof}
  Fix the initial time $s$, denote the two evolutions of $r_r$ as
  \begin{align*}
    v_r(t) &= \sol Bst r_r, \\
    w_r(t) &= \sol{L_1}st r_r
  \end{align*}
  and accordingly $v_i$ and $w_i$ with initial data $r_i$ such that
  \begin{equation*}
    k_Q(t,s) = -
    \begin{pmatrix} \setlength\arraycolsep{4pt}
      \Re (v_r-w_r)_1(t,0)
      & \Re (v_i-w_i)_1(t,0) \\
      \Im (v_r-w_r)_1(t,0)
      & \Im (v_i-w_i)_1(t,0) \\
    \end{pmatrix}.
  \end{equation*}
  In the following the proof works the same for the pairs $(v_r,w_r)$
  and $(v_i,w_i)$. Letting $(v,w)$ either pair we finish the proof.

  By the assumptions, we can choose $b$ and $b_0$ such that
  $b_0 > b_\eta + 3/2 + \max\{0,1-b_d\}$ and $b \ge b_0 + 1/2$ and
  $b_r \ge b$ satisfying the conditions of
  Lemma~\ref{thm:higher-regularity-control}. Moreover, we can ensure
  that $b \ge b_0 + 3/2$ if $b_d \le 5/2$. Hence by
  Lemma~\ref{thm:time-evolution-b} and
  \ref{thm:higher-regularity-control} there exist constants $\delta_R$
  and $C$ such that
  \begin{align*}
    \| v(t) \|_{p_{1+t-s,b_0},0} &\le C, \\
    \| w(t) \|_{p_{1+t-s,b_0},0} &\le C, \\
    \beta_d(v(t)) &\le (1+t-s)^{\frac 32 -b} C
  \end{align*}
  if $R_d(t) \le \delta_R$.

  From the definition
  \begin{equation*}
    \partial_t (v(t)-w(t))
    = L_1(v(t)-w(t)) + B_{1n}(v) + B_2(v).
  \end{equation*}
  Therefore, we find
  \begin{align*}
    \frac{\dd}{\dd t} \| v(t)-w(t) \|^2_{p_{1+t-s,b_0}}
    &\le 2 R_d(t)\, (1+t)^{-b_d}
    \| v(t) \|_{p_{1+t-s,b_0},0}
    \| v(t)-w(t) \|_{p_{1+t-s,b_0},0}\\
    &\quad+ 2 \beta_d(v(t))\, R_d(t)\, (1+t)^{-b_d}
    \| r_{\Theta} \|_{p_{1+t-s,b_0},0}
    \| v(t)-w(t) \|_{p_{1+t-s,b_0},0}.
  \end{align*}
  Using the control of $v$ and $w$, we find a constant $C$ such that
  \begin{equation*}
    \frac{\dd}{\dd t} \| v(t)-w(t) \|^2_{p_{1+t-s,b_0}}
    \le C R_d(t)\, (1+t)^{-b_d}
    \left[
      1 + (1+t-s)^{\frac 32 -b} (1+t-s)^{b_0}
    \right]
  \end{equation*}

  If $b_d > 5/2$, then the RHS is integrable and we find
  \begin{equation*}
    \| v(t) - w(t) \|_{p_{1+t-s,b_0}} \le C \sqrt{R_d(t)}
  \end{equation*}
  for a constant $C$ if $R_d(t) \le \delta_R(t)$.

  If $b_d \le 5/2$, then by the choice of $b$ we have
  $(1+t-s)^{\frac 32 -b} (1+t-s)^{b_0} \le 1$, so that
  \begin{equation*}
    \| v(t) - w(t) \|_{p_{1+t-s,b_0}} \le C\, (1+t-s)^{\max\{1-b_d,0\}} \sqrt{R_d(t)}
  \end{equation*}
  for a constant $C$ if $R_d(t) \le \delta_R(t)$.

  By Lemma~\ref{thm:control-seminorms-by-weighted-sobolev}, this gives us
  directly the pointwise control
  \begin{equation*}
    \beta_{\eta}(v(t)-w(t))
    \le C\, (1+t-s)^{-b_0+\max\{1-b_d,0\}} \sqrt{R_d(t)}
  \end{equation*}
  and
  \begin{equation}
    \label{eq:k-q-bounds-beta-alpha}
    \beta_{\alpha}\Big(v(t)-w(t)\Big)
    \le C\, (1+t-s)^{\frac 12-b_0+\max\{1-b_d,0\}}  \sqrt{R_d(t)}.
  \end{equation}

  The first part of the lemma follows by integration for $t \ge s$ as
  \begin{equation*}
    (1+t)^{b_\eta} \int_0^t \|k_Q(t,s)\|\, (1+s)^{-b_\eta} \, \dd s
    \le C \sqrt{R_d(t)} \int_0^t (1+t-s)^{b_\eta-b_0+\max\{1-b_d,0\}} \, \dd s,
  \end{equation*}
  which is uniformly bounded.

  For the effect on the rotation eigenmode, we find explicitly
  \begin{equation*}
    (K_\Theta  \conv k_Q)(t,s)
    = -
    \begin{pmatrix}
      0 \\ 1
    \end{pmatrix}
    \int_s^t
    \begin{pmatrix}
      c_{\Theta,r} & c_{\Theta,i}
    \end{pmatrix}
    \begin{pmatrix}
      \Re (\sol Bs\tau r_r {-} \sol {L_1}s\tau r_r)_1(0) &
      \Re (\sol Bs\tau r_i {-} \sol {L_1}s\tau r_i)_1(0) \\
      \Im (\sol Bs\tau r_r {-} \sol {L_1}s\tau r_r)_1(0) &
      \Im (\sol Bs\tau r_i {-} \sol {L_1}s\tau r_i)_1(0)
    \end{pmatrix}
    \dd \tau.
  \end{equation*}
  The integral can be identified with Lemma \ref{thm:k-q-stable-subspace}
  and controlled by \eqref{eq:k-q-bounds-beta-alpha} as
  \begin{equation*}
    \| (K_{\Theta} \conv k_{Q})(t,s) \|
    \le C (1+t-s)^{\frac 12-b_0+\max\{1-b_d,0\}}  \sqrt{R_d(t)}.
  \end{equation*}
  By the choice of $b_0$, this bound gives the claimed control on
  $K_{\Theta} \conv k_Q$.
\end{proof}

\section{Estimate for the order parameter}
\label{sec:estimate-order-parameter}

For the product $\conv$ and the norm
\eqref{eq:def-general-kernel-norm}, we collect the basic properties.
\begin{lemma}
  \label{thm:volterra-kernel-algebra-bounds}
  Let $\beta,\gamma \in \vkV(J,\phi)$. Then
  \begin{equation*}
    \tnorm{\beta \conv \gamma}_{L^\infty(J,\phi)}
    \le \tnorm{\beta}_{L^\infty(J,\phi)} \tnorm{\gamma}_{L^\infty(J,\phi)}.
  \end{equation*}

  If $\beta(t,s) = \beta_{c}(t-s)$ then
  \begin{equation*}
    \tnorm{\beta}_{L^\infty(J,\phi)}
    \le \| \beta_c \|_{L^1(J,\phi)}
  \end{equation*}
  and for a function $F$ on $J$
  \begin{equation*}
    \| (\beta \conv F)(t) \|_{L^\infty(J,\phi)}
    \le \tnorm{\beta}_{L^\infty(J,\phi)}
    \| F \|_{L^\infty(J,\phi)}.
  \end{equation*}
\end{lemma}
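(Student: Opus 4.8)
The plan is to prove all three bounds by the same elementary device that underlies Volterra theory (see Section 2 of Chapter 9 of \cite{gripenberg-londen-staffans-1990-volterra}): split the weight ratio $\phi(t)/\phi(s)$ across the intermediate integration variable and apply Fubini's theorem. All three inequalities then reduce to rearrangements of non-negative integrals.

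For the product bound I would start from
\begin{equation*}
  \phi(t) \int_J \| (\beta \conv \gamma)(t,s) \|\, \phi(s)^{-1}\, \dd s
  \le \int_J \int_s^t \phi(t) \| \beta(t,\tau) \|\, \| \gamma(\tau,s) \|\, \phi(s)^{-1}\, \dd \tau\, \dd s
\end{equation*}
(using that $(\beta \conv \gamma)(t,s)$ vanishes for $s > t$), insert the factor $\phi(\tau)^{-1}\phi(\tau) = 1$, and exchange the order of the $\tau$- and $s$-integrations over the region $\{0 \le s \le \tau \le t\}$. This gives
\begin{equation*}
  \le \int_0^t \phi(t) \| \beta(t,\tau) \|\, \phi(\tau)^{-1}
  \left( \int_0^\tau \phi(\tau) \| \gamma(\tau,s) \|\, \phi(s)^{-1}\, \dd s \right) \dd \tau
  \le \tnorm{\beta}_{L^\infty(J,\phi)}\, \tnorm{\gamma}_{L^\infty(J,\phi)},
\end{equation*}
bounding first the inner integral by $\tnorm{\gamma}_{L^\infty(J,\phi)}$ and then the outer one by $\tnorm{\beta}_{L^\infty(J,\phi)}$; taking the supremum over $t \in J$ yields the first claim. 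Note that here only the identity $\phi(t)/\phi(s) = [\phi(t)/\phi(\tau)][\phi(\tau)/\phi(s)]$ is used, not submultiplicativity.

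For the convolution-kernel bound, with $\beta(t,s) = \beta_c(t-s)$ and $\beta_c$ supported on $\R^+$, I would substitute $u = t-s$ to rewrite the defining integral at fixed $t$ as $\phi(t) \int_0^t \| \beta_c(u) \|\, \phi(t-u)^{-1}\, \dd u$ and then invoke submultiplicativity $\phi(t) = \phi\big(u + (t-u)\big) \le \phi(u)\,\phi(t-u)$, so that $\phi(t)/\phi(t-u) \le \phi(u)$; hence the quantity is at most $\int_0^t \| \beta_c(u) \|\, \phi(u)\, \dd u \le \| \beta_c \|_{L^1(J,\phi)}$, uniformly in $t$. Finally, for the action on a function $F$,
\begin{equation*}
  \phi(t) \| (\beta \conv F)(t) \|
  \le \int_0^t \big( \phi(t) \| \beta(t,s) \|\, \phi(s)^{-1} \big) \big( \phi(s) \| F(s) \| \big)\, \dd s
  \le \tnorm{\beta}_{L^\infty(J,\phi)}\, \| F \|_{L^\infty(J,\phi)}
\end{equation*}
by pulling $\| F \|_{L^\infty(J,\phi)}$ out of the integral and recognising what remains as bounded by $\tnorm{\beta}_{L^\infty(J,\phi)}$; the supremum over $t$ gives the last inequality.

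I do not expect a genuine obstacle here: the entire content is the Fubini rearrangement together with the splitting/submultiplicativity of the weight. The only points that deserve a line of care are the measurability of the integrands, so that Fubini and the iterated-integral manipulations are legitimate, and the fact that $J$ may be the unbounded interval $\R^+$ — but this causes no difficulty, since every integrand is non-negative and the bounds obtained are monotone in $J$.
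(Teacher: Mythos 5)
Your proof is correct and is exactly the standard submultiplicativity/Fubini computation that the paper's one-line proof delegates to Section 2 of Chapter 9 of Gripenberg--Londen--Staffans; all three estimates check out, including the (accurate) remark that the first one only needs the telescoping $\phi(t)/\phi(s)=[\phi(t)/\phi(\tau)][\phi(\tau)/\phi(s)]$ rather than submultiplicativity. Nothing is missing.
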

\begin{proof}
  The inequalities follow directly from the submultiplicativity, see
  \cite[Section~2 of
  Chapter~9]{gripenberg-londen-staffans-1990-volterra}.
\end{proof}

The general Volterra equation \eqref{eq:volterrra-eta} with kernel \(k\)
can be solved using a general resolvent \(r\) satisfying
\begin{equation}
  \label{eq:nonconvolution-resolvent-definition}
  r + k \conv r = r + r \conv k = k.
\end{equation}

In the linear case, we have identified the resolvent
\(r_L(t,s) = r_{Lc}(t-s)\) using the convolution structure
(Lemma~\ref{thm:resolvent-eigenmode-form}). For a small enough
perturbation such a resolvent can be constructed by a series.  Here we
face the difficulty that the resolvent $r_{Lc}$ is not in
$L^1(\R,p_{b_{\eta}})$ due to the rotation eigenmode contribution
$K_{\Theta}$. However, we can circumvent the problem by only using
$r_{L} \conv k_{Q}$, which is better behaved, because the kernel $k_Q$
only creates contributions in the stable subspace. Adapting Lemma~3.7
of Chapter~9 of \cite{gripenberg-londen-staffans-1990-volterra} to the
case of an eigenmode, we find the resolvent.
\begin{lemma}
  \label{thm:perturbation-resolvent}
  Let $b_{\eta} \ge 0$ and let $k_{Lc}$ be a convolution kernel with
  $k_{Lc} \in L^1(\R^+,p_{b_\eta})$ and resolvent
  \begin{equation*}
    r_{Lc} = K_{\Theta} + r_{Lcs}
  \end{equation*}
  where $K_{\Theta}$ is a constant matrix and
  $r_{Lcs} \in L^1(\R^+,p_{b_{\eta}})$. Let $r_L$ and $r_{Ls}$ be the
  Volterra kernels corresponding to the convolution kernels $r_{Lc}$
  and $r_{Lcs}$, respectively.

  Assume a Volterra kernel $k_Q$ for a time range $J = [0,T]$
  satisfying
  \begin{equation*}
    \tnorm{k_Q - r_{L} \conv k_Q}_{L^\infty(J,p_{b_{\eta}})} < 1.
  \end{equation*}
  Then $k = k_L + k_Q$ has a resolvent $r$ for the time range $J$ of
  the form
  \begin{equation*}
    r = K_{\Theta} + r_Q \conv K_{\Theta} + r_{s},
  \end{equation*}
  where
  \begin{equation*}
    \tnorm{r_Q}_{L^\infty(J,p_{b_\eta})} \le
    \frac{\tnorm{k_Q - r_{L} \conv k_Q}_{L^\infty(J,p_{b_{\eta}})}}
    {1-\tnorm{k_Q - r_{L} \conv k_Q}_{L^\infty(J,p_{b_{\eta}})}}
  \end{equation*}
  and
  \begin{equation*}
    \tnorm{r_s}_{L^\infty(J,p_{b_\eta})} \le
    \frac{\tnorm{k_Q - r_{L} \conv k_Q +
      r_{Ls}}_{L^\infty(J,p_{b_{\eta}})}}
    {1-\tnorm{k_Q - r_{L} \conv k_Q}_{L^\infty(J,p_{b_{\eta}})}}.
  \end{equation*}
\end{lemma}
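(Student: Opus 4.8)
The plan is to invert the kernel $\delta + k$ — with $\delta$ the identity for the $\conv$-product (formally the Dirac kernel) — by peeling off the part $\delta + k_L$, whose inverse is already understood through \cref{thm:linear-resolvent-bound}. First I would record the factorisation
\begin{equation*}
  \delta + k = (\delta + k_L) \conv \big(\delta + \tilde{k}_Q\big),
  \qquad \tilde{k}_Q := k_Q - r_L \conv k_Q ,
\end{equation*}
which is checked directly from the resolvent identity $r_L + k_L \conv r_L = k_L$: it gives $(\delta + k_L)\conv(\delta - r_L) = \delta$, hence $\tilde{k}_Q = (\delta - r_L)\conv k_Q$ satisfies $(\delta + k_L)\conv\tilde{k}_Q = k_Q$. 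The point is that $\tilde{k}_Q$ is exactly the kernel whose $\vkV(J,p_{b_\eta})$-norm is assumed to be $<1$.

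Second, since $(\vkV(J,p_{b_\eta}),\conv)$ is a Banach algebra by \cref{thm:volterra-kernel-algebra-bounds} and $\tnorm{\tilde{k}_Q}_{L^\infty(J,p_{b_\eta})}<1$, the Neumann series $r_Q := \sum_{n\ge 1}(-1)^{n+1}\tilde{k}_Q^{\conv n}$ converges in $\vkV(J,p_{b_\eta})$ to the resolvent of $\tilde{k}_Q$, i.e.\ $r_Q + \tilde{k}_Q\conv r_Q = r_Q + r_Q\conv\tilde{k}_Q = \tilde{k}_Q$, with
\begin{equation*}
  \tnorm{r_Q}_{L^\infty(J,p_{b_\eta})}
  \le \sum_{n\ge 1}\tnorm{\tilde{k}_Q}_{L^\infty(J,p_{b_\eta})}^{\,n}
  = \frac{\tnorm{\tilde{k}_Q}_{L^\infty(J,p_{b_\eta})}}{1-\tnorm{\tilde{k}_Q}_{L^\infty(J,p_{b_\eta})}} ,
\end{equation*}
which is the first asserted bound; in particular $\delta - r_Q$ is the $\conv$-inverse of $\delta + \tilde{k}_Q$ on $J$.

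Third, I would compose the two inverses: from the factorisation, $(\delta + k)^{-1} = (\delta - r_Q)\conv(\delta - r_L)$, so the resolvent of $k$ on $J$ is $r = r_L + r_Q - r_Q\conv r_L$, which is automatically the two-sided resolvent of $k$ since $\delta - r_L$ and $\delta - r_Q$ are two-sided inverses. Substituting $r_L = K_\Theta + r_{Ls}$ and regrouping, the two contributions that are not integrable against $p_{b_\eta}$ — namely $K_\Theta$ itself and the one-sided convolution $r_Q\conv K_\Theta$ — split off to leave $r$ in the asserted form, the weighted-$L^1$ remainder being $r_s = (\delta - r_Q)\conv(\tilde{k}_Q + r_{Ls})$ (equivalently $r_s = r_Q + r_{Ls} - r_Q\conv r_{Ls}$ after using $\tilde{k}_Q - r_Q\conv\tilde{k}_Q = r_Q$). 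The stated bound on $r_s$ then follows from $\tnorm{(\delta - r_Q)\conv h}_{L^\infty(J,p_{b_\eta})} \le (1+\tnorm{r_Q}_{L^\infty(J,p_{b_\eta})})\,\tnorm{h}_{L^\infty(J,p_{b_\eta})}$ applied with $h = \tilde{k}_Q + r_{Ls}$, together with $1+\tnorm{\tilde{k}_Q}/(1-\tnorm{\tilde{k}_Q}) = (1-\tnorm{\tilde{k}_Q})^{-1}$ and $\tnorm{r_{Ls}}_{L^\infty(J,p_{b_\eta})} \le \|r_{Lcs}\|_{L^1(\R^+,p_{b_\eta})}<\infty$.

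The delicate point — and the reason this is an adaptation of, not a direct application of, Lemma 3.7 of Chapter 9 of \cite{gripenberg-londen-staffans-1990-volterra} — is that neither $K_\Theta$ nor $r_Q\conv K_\Theta$ belongs to the weighted $L^1$ class, so the inverse $\delta - r_L$ of $\delta + k_L$ only exists in the larger class that carries the one-dimensional eigenmode. The formal manipulations above therefore have to be justified in that setting, which is exactly where the structural conclusion of \cref{thm:linear-resolvent-bound} (and Section 3 of Chapter 7 of \cite{gripenberg-londen-staffans-1990-volterra}) enters; concretely one must verify that $r_Q\conv K_\Theta$ is again a bona fide Volterra kernel on $J$ and that it, together with $K_\Theta$, absorbs all of the non-integrable contributions, leaving $r_s$ genuinely in the weighted $L^1$ class with the displayed bound. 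Keeping this decomposition consistent is the only real obstacle; the remaining work is the geometric-series bookkeeping above.
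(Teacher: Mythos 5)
Your proposal is correct and follows essentially the same route as the paper: both peel off the linear part via its resolvent, reduce to the kernel $\tilde{k}_Q = k_Q - r_L\conv k_Q$ whose norm is assumed small, sum the Neumann series in the Banach algebra of \cref{thm:volterra-kernel-algebra-bounds}, and use $k-r_L\conv k = K_\Theta + \tilde{k}_Q + r_{Ls}$ to isolate the non-integrable $K_\Theta$ contributions. The only differences are presentational — you obtain the two-sided resolvent identity from the factorisation $\delta+k=(\delta+k_L)\conv(\delta+\tilde{k}_Q)$ rather than by the paper's direct verification, and your $r_Q$ is the negative of the paper's (immaterial, since only its norm enters).
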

\begin{proof}
  We define $r$ by
  \begin{equation*}
    r = (k - r_{L} \conv k) +
    \left(\sum_{n=1}^{\infty} (-1)^n (k_Q - r_{L} \conv k_Q)^{\conv n} \right)
    \conv (k - r_{L} \conv k),
  \end{equation*}
  which is an absolutely converging sum by the assumed
  bound. Moreover, we note that
  \begin{equation*}
    k - r_{L} \conv k
    = K_{\Theta} + (k_Q - r_{L} \conv k_Q + r_{Ls}),
  \end{equation*}
  so that $r$ has the claimed form with the bounds of $r_Q$ and $r_s$.

  In order to show that $r$ satisfies
  \eqref{eq:nonconvolution-resolvent-definition}, we first note that
  multiplying by $k_Q-r_L\conv k_Q$ from the left shows that
  \begin{equation}
    \label{eq:perturbation-kernel-right-1}
    (k_Q-r_L \conv k_Q) \conv r = -r + k - r_L \conv k.
  \end{equation}
  Multiplying \eqref{eq:perturbation-kernel-right-1} by $k_L$ from the
  left shows together for the resolvent equation for the kernel $k_L$
  that
  \begin{equation}
    \label{eq:perturbation-kernel-right-2}
    (k_L + r_L \conv k_Q) \conv r = r_L \conv k.
  \end{equation}
  Combining \eqref{eq:perturbation-kernel-right-1} and
  \eqref{eq:perturbation-kernel-right-2} then gives
  \begin{equation*}
    r + k \conv r = k.
  \end{equation*}

  For the other part of the resolvent equation
  \eqref{eq:nonconvolution-resolvent-definition}, note that from the
  definition of $r$ it follows that
  \begin{equation}
    \label{eq:perturbation-kernel-left}
    r + r \conv k_L = k_L - \left(\sum_{n=1}^{\infty} (-1)^n (k_Q - r_{L} \conv k_Q)^{\conv n} \right).
  \end{equation}
  Multiplying \eqref{eq:perturbation-kernel-left} by $k_Q-r_L \conv k_Q$ then shows that
  \begin{equation*}
    \begin{split}
      &(r + r \conv k_L) \conv (k_Q-r_L \conv k_Q) \\
      &= k_L \conv (k_Q-r_L \conv k_Q)
      + \left(\sum_{n=1}^{\infty} (-1)^n (k_Q - r_{L} \conv k_Q)^{\conv
          n} \right)
      + (k_Q - r_L \conv k_Q).
    \end{split}
  \end{equation*}
  Replacing the sum by \eqref{eq:perturbation-kernel-left} and
  rearranging then shows the required equality.
  \begin{equation*}
    r + r \conv k = k. \qedhere
  \end{equation*}
\end{proof}

Combining this result with the bound on $k_Q$, we prove the result for
the Volterra equation.
\begin{proof}[Proof of Lemma~\ref{thm:control-eta-volterra}]
  By Proposition~\ref{thm:linear-resolvent-bound}, the linear convolution kernel
  $k_{Lc}$ has a resolvent
  \begin{equation*}
    r_{Lc} = K_{\Theta} + r_{Lcs},
  \end{equation*}
  with $\| r_{Lcs} \|_{L^1(\R^+,p{_{b_{\eta}}})} < \infty$.

  For the nonlinear behaviour, we fix the time range $J = [0,t]$ and
  assume that $R_d(t) \le \delta_R$, where $\delta_R$ is chosen small
  enough so that Lemma \ref{thm:control-k-q} implies
  \begin{align*}
    \tnorm{ k_Q }_{L^\infty(J,p_{b_{\eta}})} &\le \frac{2 + \| r_{Lcs}
    \|_{L^1(\R^+,p{_{b_{\eta}}})}}{4}, \\
    \tnorm{ K_{\Theta} \conv k_Q }_{L^\infty(J,p_{b_{\eta}})} &\le \frac{2 + \| r_{Lcs}
    \|_{L^1(\R^+,p{_{b_{\eta}}})}}{4}.
  \end{align*}

  Then Lemma~\ref{thm:volterra-kernel-algebra-bounds} implies that
  \begin{equation*}
    \tnorm{k_Q - r_L \conv k_Q}_{L^\infty(J,p_{b_{\eta}})}
    \le (1 + \tnorm{r_{Ls}}_{L^\infty(J,p_{b_{\eta}})})\, \tnorm{k_Q}_{L^\infty(J,p_{b_{\eta}})}
    + \tnorm{K_{\Theta} \conv k_Q}_{L^\infty(J,p_{b_{\eta}})}
    \le \frac 12.
  \end{equation*}

  Therefore, Lemma \ref{thm:perturbation-resolvent} shows that the kernel
  $k$ has the resolvent
  \begin{equation*}
    r = K_{\Theta} + r_Q \conv K_{\Theta} + r_{s},
  \end{equation*}
  where
  \begin{equation*}
    \tnorm{r_Q}_{L^\infty(J,p_{b_\eta})} \le 1
    \qquad\text{ and }\qquad
    \tnorm{r_s}_{L^\infty(J,p_{b_\eta})} \le 1 + 2\, \| r_{Lcs} \|_{L^1(\R^+,p{_{b_{\eta}}})}.
  \end{equation*}
  By Theorem~3.6 of Chapter~9 of
  \cite{gripenberg-londen-staffans-1990-volterra}, the Volterra
  equation \eqref{eq:volterrra-eta} then has over the time range $J$
  the unique solution
  \begin{equation}
    \label{eq:solution-volterra-eta}
    \begin{pmatrix}
      \Re \conj{\eta} \\ \Im \conj{\eta}
    \end{pmatrix}
    =
    \compF - r \conv \compF.
  \end{equation}
  This follows by elementary calculations from
  \eqref{eq:nonconvolution-resolvent-definition}, which we repeat
  here. Indeed \eqref{eq:solution-volterra-eta} defines a solution,
  because
  \begin{equation*}
    \compF - r \conv \compF
    + k \conv (\compF - r \conv \compF)
    = \compF + (k - r - k\conv r) \conv \compF
    = \compF.
  \end{equation*}
  On the other hand for a solution
  \begin{equation*}
    \begin{pmatrix}
      \Re \conj{\eta} \\ \Im \conj{\eta}
    \end{pmatrix}
    + k \conv
    \begin{pmatrix}
      \Re \conj{\eta} \\ \Im \conj{\eta}
    \end{pmatrix}
    = F,
  \end{equation*}
  we find by multiplying from the left with $r$ that
  \begin{equation*}
    k \conv
        \begin{pmatrix}
      \Re \conj{\eta} \\ \Im \conj{\eta}
    \end{pmatrix}
    = r \conv F,
  \end{equation*}
  which shows that the solution has the claimed form
  \eqref{eq:solution-volterra-eta}.

  By Corollary \ref{thm:forcing-control-eigenmode-b-alpha}, we find
  \begin{equation*}
    \| K_{\Theta} \conv F \|_{L^\infty(J,p_{\eta})}
    \le \sup_{s\in J} \beta_{\alpha}(\sol B0t u_{\init}) \,(1+s)^{b_{\eta}}.
  \end{equation*}
  Hence by Lemma \ref{thm:volterra-kernel-algebra-bounds},
  \begin{equation*}
    \| r \conv F \|_{L^\infty(J,p_{\eta})}
    \le \Big(1+\tnorm{r_Q}_{L^\infty(J,p_{b_\eta})}\Big)
    \| K_{\Theta} \conv F \|_{L^\infty(J,p_{\eta})}
    + \tnorm{r_s}_{L^\infty(J,p_{b_\eta})}
    \| F \|_{L^\infty(J,p_{\eta})},
  \end{equation*}
  which is the claimed control.
\end{proof}

\section{Bootstrap argument}
\label{sec:bootstrap}

The obtained estimates allow a control of the order parameter
$\eta(t)$.
\begin{proof}[Proof of Lemma \ref{thm:bootstrap-order-parameter}]
  First we chose $\delta_R$ small enough to apply
  Lemma~\ref{thm:time-evolution-b} to conclude under $R_d(t) \le \delta_R$ that
  \begin{equation*}
    \beta_d(\sol B0t u_\init)\, (1+t)^{-\frac 32 + b}
    \le 2 C_{\beta d} \| u_{\init} \|_{p_b}.
  \end{equation*}

  With this control we find by Lemma~\ref{thm:time-evolution-b}
  \begin{align*}
    \beta_\alpha(\sol B0t u_\init)\, (1{+}t)^{-\frac 12 +b}
    &\le (2b{-}1)^{-1/2} \| u_\init \|_{p_b} \\
    &+ \frac{2 C_{\beta d}}{\sqrt{2b_r{-}1}}
    \| r_{\Theta} \|_{p_{b_r}} \|K_{\Theta}\| R_d(t)
    (1{+}t)^{-\frac 12 +b}
    \int_0^t (1{+}t{-}s)^{\frac 12 - b_r} (1{+}s)^{-b_d + \frac 32 -b}\, \dd s.
  \end{align*}
  Here we find
  \begin{equation*}
    (1{+}t)^{-\frac 12 +b}
    \int_0^t (1{+}t{-}s)^{\frac 12 - b_r} (1{+}s)^{-b_d + \frac 32 -b}
    \,\dd s
    \le
    \int_0^t (1{+}t{-}s)^{b - b_r} (1{+}s)^{1-b_d}\, \dd s,
  \end{equation*}
  which is bounded uniformly over $t$, as $b_d > 1$ and $b_r > b+1$.

  Likewise, it holds that
  \begin{align*}
    &\beta_\eta(\sol B0t u_\init)\, (1{+}t)^{-\frac 12 +b} \\
    &\le C_S \| u_\init \|_{p_b}
    + 2 C_{\beta d} C_S
    \| r_{\Theta} \|_{p_{b_r}} R_d(t)
    (1{+}t)^{-\frac 12 +b}
    \int_0^t (1{+}t{-}s)^{- b_r} (1{+}s)^{-b_d + \frac 32 -b}\, \dd s,
  \end{align*}
  which can be controlled in the same way. Hence there exists a
  constant $C$ such that $F(t) = (\sol B0t u_\init)_1(0)$ satisfies
  with \(b_\eta = b - 1/2\)
  \begin{equation*}
    \sup_{s \in [0,t]}
    \Big(
      |F(s)| + \beta_{\alpha}\left(\sol B0s u_{\init}\right)
    \Big)
    (1+s)^{b_{\eta}}
    \le C \| u_{\init} \|_{p_{b}}
  \end{equation*}
  if $R_d(t) \le \delta_R$.

  Decreasing $\delta_R$ if needed, we apply
  Lemma \ref{thm:control-eta-volterra} for the Volterra equation with
  $b_{\eta} = b - 1/2$ and obtain the result.
\end{proof}

With the order parameter, we can control the full solution.
\begin{proof}[Proof of Lemma \ref{thm:bootstrap-beta-d}]
  By Lemma~\ref{thm:time-evolution-b}, the solution up to time \(t\)
  is given by \eqref{eq:solution-u-duhamel}. Taking \(\beta_d\) of the
  equation yields
  \begin{equation*}
    \beta_d(u(t)) \le \beta_d(\sol B0t u_{\init})
    + \int_0^t \beta_d(\sol Bst L_2 u(s))\, \dd s.
  \end{equation*}
  Recall that
  \begin{equation*}
    L_2 u(s) = \Re \conj{\eta}(s)\, r_r
    + \Im \conj{\eta}(s)\, r_i
  \end{equation*}
  By choosing $\delta_R$ small enough, Lemma \ref{thm:time-evolution-b}
  implies under $R_d(t) \le \delta_R$ that
  \begin{equation*}
    \beta_d(u(t)) \le 2C_{\beta d} (1{+}t)^{\frac 32-b}
    \| u_{\init} \|_{p_{b}}
    + 2 C_{\beta d} C_{r}
    \int_0^t (1{+}t{-}s)^{\frac 32-b_r} (1{+}s)^{-b_d} \dd s
    \sup_{s \in [0,t]} (1{+}s)^{b_d} |\eta(s)|,
  \end{equation*}
  where
  \begin{equation*}
    C_{r} = \| r_{r} \|_{p_{b_r}} + \| r_{i} \|_{p_{b_r}}.
  \end{equation*}
  As $b-b_r < -1$ and $-3/2 + b - b_d = -1$, there exists a constant
  $C$ such that for all time $t$ it holds that
  \begin{equation*}
    (1+t)^{-\frac 32 + b}
    \int_0^t (1+t-s)^{\frac 32-b_r} (1+s)^{-b_d} \dd s
    \le
    \int_0^t (1+t-s)^{b-b_r} (1+s)^{-\frac 32 + b - b_d}
    \le C,
  \end{equation*}
  which shows the claimed control.
\end{proof}

In preparation of the bootstrap argument, we prove a well-posedness
result.
\begin{lemma}
  \label{thm:local-well-posedness}
  Let $b \ge 0$ and $\hat{f}_{\init}$ be initial data. Assume that the
  velocity marginal $\hat{g} = (\hat{f}_{\init})_0$ satisfies
  $\|\hat{g}\|_{p_b} < \infty$ and that the restriction to
  $\ell \ge 1$ is $\hat{f}_{\init} \in \sX_{p_b}$. Then for any time
  $T > 0$, there exists a global unique solution
  $\hat{f} \in C_w([0,T],\sX_{p_{b}})$ to
  \eqref{eq:kuramoto-mean-field-pde-fourier} with initial data
  $\hat{f}_{\init}$ and the constant velocity marginal $\hat{g}$.
\end{lemma}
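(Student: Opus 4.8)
The plan is to close the nonlinearity by a Banach fixed-point argument on the \emph{scalar} order parameter $r(t)=\conj{\hat{f}_1(t,0)}$, reducing at each step to the linear transport evolution with prescribed coefficients that is already controlled by \cref{thm:well-posedness-l1-b1,thm:propagation-l1-b1}.

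First I would reduce to the region $\ell\ge 1$, $\xi\ge 0$. Every term of \eqref{eq:kuramoto-mean-field-pde-fourier} carries a factor $\ell$, so $\hat{f}_0(t)\equiv\hat{g}$ is conserved; and, as in the discussion preceding \cref{thm:regularity-stationary-state}, the characteristics of $\ell\partial_\xi$ (with $\ell\ge 1$) leave $\{\xi\ge 0\}$ through $\xi=0$, so $\{\ell\ge 1,\ \xi\ge 0\}$ is its own domain of dependence and needs no boundary datum at $\xi=0$; the remaining modes are recovered from $\hat{f}_0=\hat{g}$ and the reality of $f$. On this region the system is the affine equation $\partial_t\hat{f}=E_r\hat{f}+S_r(t)$, where $E_r$ is the operator $B_1$ with coefficients $\dot\Theta\equiv 0$ and $\eta=r-r_\stat$, and the inhomogeneity $S_r(t)$ equals $\tfrac{K}{2}\conj{r(t)}\hat{g}$ in the $\ell=1$ component and $0$ otherwise (it is the $\hat{f}_0=\hat{g}$ contribution to the $\ell=1$ equation). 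For a \emph{given} continuous $r:[0,T]\to\C$, \cref{thm:well-posedness-l1-b1} and Duhamel's formula then yield a unique weak solution $\hat{f}^{r}\in C_w([0,T],\sX_{p_b})$ with data $\hat{f}_{\init}$; \cref{thm:propagation-l1-b1} together with the $\beta_\eta$-bound of \cref{thm:control-seminorms-by-weighted-sobolev} gives, by a Gronwall argument, an a priori bound for $\|\hat{f}^{r}(t)\|_{p_b}$ which is finite on $[0,T]$ (growing at most like $\exp(C(1+t)^{b+1})$), and shows that $t\mapsto\conj{\hat{f}^{r}_1(t,0)}$ is continuous with sup-norm $\le C_S\sup_{[0,T]}\|\hat{f}^{r}(t)\|_{p_b}$.

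Next I would run the fixed point. With $M:=4C_S\|\hat{f}_{\init}\|_{p_b}$ and $B:=\{r\in C([0,T_0]):\|r\|_\infty\le M\}$, the previous step shows that $\mathcal{T}:r\mapsto\conj{\hat{f}^{r}_1(\cdot,0)}$ maps $B$ into itself once $T_0$ is small (the required smallness depending only on $\|\hat{f}_{\init}\|_{p_b}$, $\|\hat{g}\|_{p_b}$, $b$). The delicate point is the contraction. The difference $d=\hat{f}^{r^{(1)}}-\hat{f}^{r^{(2)}}$ solves $\partial_t d=E_{r^{(1)}}d+S$ with source $S_\ell=\tfrac{K\ell}{2}\big((\conj{r^{(1)}}-\conj{r^{(2)}})\,\hat{f}^{r^{(2)}}_{\ell-1}-(r^{(1)}-r^{(2)})\,\hat{f}^{r^{(2)}}_{\ell+1}\big)$, where $\hat{f}^{r^{(2)}}_0=\hat{g}$; estimating $d$ directly in $\sX_{p_b}$ fails, because the factor $\ell$ in $S$ against the weight $\ell^{-1}$ of $\sX_{p_b}$ would cost one extra power of $\ell$ of $\hat{f}^{r^{(2)}}$, not available at this regularity. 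The remedy is to estimate $d$ in the \emph{weaker} space $\sX_{p_b,-3/2}$ (any $\sX_{p_b,k}$ with $k\le -3/2$ works): there the factor $\ell$ is regained, giving $\|S(s)\|_{p_b,-3/2}\le C\,|r^{(1)}(s)-r^{(2)}(s)|\,(\|\hat{g}\|_{p_b}+\|\hat{f}_{\init}\|_{p_b})$; and the energy estimate for $E_{r^{(1)}}=B_1$ still closes on $\sX_{p_b,-3/2}$ — with the weight $\ell^{2k}$ in place of $\ell^{-1}$ the coupling no longer cancels exactly but contributes a term bounded by $CM\,\|\cdot\|^2_{p_b,-3/2}$, by the computation used in the proof of \cref{thm:propagation-l1-b1}. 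Hence, by Duhamel, $\|d(t)\|_{p_b,-3/2}\le C(M,T_0,\|\hat{g}\|_{p_b})\,T_0\,\|r^{(1)}-r^{(2)}\|_{L^\infty([0,T_0])}$, and since the $\beta_\eta$-bound only involves mode $\ell=1$ we still have $|\conj{d_1(t,0)}|\le C_S\|d(t)\|_{p_b,-3/2}$; therefore $\|\mathcal{T}r^{(1)}-\mathcal{T}r^{(2)}\|_{L^\infty([0,T_0])}\le L(T_0)\|r^{(1)}-r^{(2)}\|_{L^\infty([0,T_0])}$ with $L(T_0)\to 0$ as $T_0\to 0$. For $T_0$ small, Banach's theorem gives a unique fixed point $r\in B$, and $\hat{f}:=\hat{f}^{r}$ solves the restricted system on $[0,T_0]$.

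Finally I would globalize and obtain uniqueness. The local existence time $T_0$ depends only on $\|\hat{f}_{\init}\|_{p_b}$, $\|\hat{g}\|_{p_b}$, $b$, while the a priori bound from \cref{thm:propagation-l1-b1} keeps $\|\hat{f}(t)\|_{p_b}$ bounded on any fixed $[0,T]$; hence the local time stays bounded below along the iteration, and $\lceil T/T_0\rceil$ restarts — concatenated, the pieces matching by weak continuity and the weak formulation being additive across the partition — produce a solution $\hat{f}\in C_w([0,T],\sX_{p_b})$. For uniqueness in the full class, two such solutions on $[0,T]$ have continuous order parameters, bounded in modulus by $C_S\sup_{[0,T]}\|\hat{f}(t)\|_{p_b}$, each a fixed point of $\mathcal{T}$ on a small initial interval; the contraction forces them to agree there, the solutions then agree by the uniqueness part of \cref{thm:well-posedness-l1-b1}, and one bootstraps over finitely many intervals to all of $[0,T]$. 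I expect the main obstacle to be precisely the contraction estimate: the coupling $\tfrac{K\ell}{2}(\cdots)$ loses one power of $\ell$ — one $\theta$-derivative — so the fixed point cannot be closed in $\sX_{p_b}$ itself and must be run through the weaker norm $\sX_{p_b,-3/2}$, which forces one to check, by straightforward variants of \cref{thm:well-posedness-l1-b1,thm:propagation-l1-b1,thm:control-seminorms-by-weighted-sobolev}, that the $B_1$-evolution and the relevant trace stay controlled there. The linear solve with prescribed $r$ and the a priori bound yielding globality are then routine.
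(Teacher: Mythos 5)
Your argument is correct, but it follows a genuinely different route from the paper. You construct the solution by a Banach fixed point on the scalar order parameter $r(t)=\conj{\hat{f}_1(t,0)}$: freeze $r$, solve the resulting affine $B_1$-type problem via \cref{thm:well-posedness-l1-b1} and Duhamel, and contract the map $r\mapsto\conj{\hat{f}^r_1(\cdot,0)}$. You correctly identify the loss of one power of $\ell$ in the difference equation (the source carries $\tfrac{K\ell}{2}$ against the weight $\ell^{-1}$ of $\sX_{p_b}$) and repair it by running the contraction in the weaker space $\sX_{p_b,-3/2}$, where the coupling term no longer cancels exactly but is absorbed à la \cref{thm:beta-alpha-control-regularity-induction} and the $\ell=1$ trace is still controlled by Sobolev embedding; uniqueness and globalization then come for free from the contraction and the a priori bound. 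The paper instead proceeds by compactness: the a priori estimate $\partial_t\|\hat{f}\|^2_{p_b}\le K|\hat{f}_1(0)|\,\|\hat g\|_{p_b}\|\hat f\|_{p_b}\le C\|\hat f\|^2_{p_b}$ on the nonlinear equation itself, Galerkin-type approximate solutions obtained by truncating to modes $\ell\in[1,n]$, the Aubin--Lions lemma applied to $\hat f^n_1$ on $\xi\in[0,1]$ to get \emph{strong} convergence of the order parameter $\hat f^n_1(\cdot,0)$ (which is what allows passing to the limit in the quadratic term without any contraction), and weak compactness for the rest; uniqueness is obtained separately from continuity (Morrey) plus a theorem from the companion paper. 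The trade-off: your route is self-contained and constructive, yields uniqueness and continuous dependence on the data directly, but requires checking that the well-posedness, energy estimate and trace bound all persist in $\sX_{p_b,-3/2}$ (straightforward variants of the stated lemmas, but not literally proved in the paper); the paper's route sidesteps the loss-of-derivative issue entirely at the cost of a compactness extraction and an external uniqueness citation. Two cosmetic remarks: for fixed bounded $r$ the linear a priori bound grows at most exponentially (in fact linearly) in $t$, not like $\exp(C(1+t)^{b+1})$; and in the uniqueness step you should take the ball radius $M$ large enough to contain the order parameters of \emph{both} candidate solutions (finite since $C_w([0,T],\sX_{p_b})$ solutions are by definition bounded in $\sX_{p_b}$), shrinking $T_0$ accordingly — which your sketch essentially does.
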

\begin{proof}
  Given a solution $\hat{f} \in C_w([0,T],\sX_{p_{b}})$, we find that
  $\hat{f}$ is a continuous function of time $t$ and frequency $\xi$
  by Morray's inequality. Hence Theorem 15 in
  \cite{dietert-2016-stability-bifurcation} applies to show
  uniqueness.

  The existence can be proven similar to Lemma~3.2 of
  \cite{dietert-fernandez-gerard-varet-2018-landau-damping-pls}. The
  key point is the a priori estimate
  \begin{equation*}
    \partial_t \| \hat{f} \|^2_{p_b}
    \le K |\hat{f}_1(0)| \| g \|_{p_b} \| \hat{f} \|_{p_b},
  \end{equation*}
  similar to the estimate in the proof of
  Lemma~\ref{thm:evolution-l1-b1}. By the Sobolev embedding used in
  Lemma~\ref{thm:control-seminorms-by-weighted-sobolev}, this shows that
  there exists a constant $C$ such that
  \begin{equation*}
    \partial_t \| \hat{f} \|^2_{p_b} \le C \| \hat{f} \|^2_{p_b}.
  \end{equation*}
  As in
  \cite{dietert-fernandez-gerard-varet-2018-landau-damping-pls},
  we can build approximate solutions $\hat{f}^n$ satisfying this bound
  by restricting to the spatial modes $\ell \in [1,\dots ,n]$ and
  compact smooth initial data.

  Looking at $\ell = 1$ and $\xi \in [0,1]$, the estimate shows that
  $\hat{f}^n_1 \in L^\infty([0,T],H^1([0,1]))$ and
  $\partial_t \hat{f}^n_1 \in L^\infty([0,T],L^2([0,1]))$. By the
  Aubin-Lions Lemma, we extract a subsequence for which
  $\hat{f}^n_1(\cdot,0)$ converges strongly in $L^\infty([0,T])$. By
  the weak compactness, we can extract a further subsequence
  converging to a weak solution.
\end{proof}

For the initial data $\hat{f}_{\init}$, we show that close to
$\hat{f}_{\stat}$ it can be written in the required polar coordinate
form.
\begin{lemma}
  \label{thm:local-choice-theta}
  Given a stationary state $\hat{f}_{\stat}$ and $b > 3/2$. Then there
  exist $\delta_{\init}$ and $\delta_{\Theta}$ such that for
  $\hat{f}_{\init}$ with
  $\| \hat{f}_{\init} - \hat{f}_{\stat} \|_{p_b} < \delta_\init$,
  there exists a unique $\Theta \in (-\delta_\Theta,\delta_{\Theta})$
  such that $u = R_{-\Theta} \hat{f}_{\init} - \hat{f}_{\stat}$
  satisfies
  \begin{equation*}
    \alpha(u) = 0.
  \end{equation*}
  Moreover, there exists a constant $C$ such that
  \begin{equation*}
    | \Theta | \le C \, \| \hat{f}_{\init} - \hat{f}_{\stat} \|_{p_b}
  \end{equation*}
  and
  \begin{equation*}
    \| u \|_{p_b} \le \| f_{\init} - f_{\stat} \|_{p_b} + |\Theta| \,
    \| \hat{f}_\stat \|_{p_b,0}.
  \end{equation*}
\end{lemma}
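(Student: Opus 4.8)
The plan is to solve $\alpha(u)=0$ by a contraction-mapping argument, splitting off the contribution of the (regular) stationary state from that of the (merely $\sX_{p_b}$-regular) perturbation. Write $v = \hat{f}_{\init} - \hat{f}_{\stat}$, so $\| v \|_{p_b} < \delta_{\init}$, and recall from \cref{thm:resolvent-eigenmode-form} that $a := \alpha(\DD\hat{R}\hat{f}_{\stat}) \neq 0$. We must find a small zero of
\begin{equation*}
  G(\Theta) = \alpha\big( \hat{R}_{-\Theta}\hat{f}_{\init} - \hat{f}_{\stat} \big).
\end{equation*}
Since $\hat{R}_{-\Theta}$ acts on $\sX_{p_b}$ by multiplying the $\ell$-th mode by the unimodular factor $\ee^{-\ii\ell\Theta}$, it is an isometry of $\sX_{p_b}$; together with $|\alpha(w)| \le \frac{\| K_{\Theta} \|}{\sqrt{2b-1}}\| w \|_{p_b}$ from \cref{thm:control-seminorms-by-weighted-sobolev} this gives $|\alpha(\hat{R}_{-\Theta} v)| \le C \| v \|_{p_b}$ uniformly in $\Theta$.

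The central observation is that, for $b > 3/2$ and any $w \in \sX_{p_b}$, the map $\Theta \mapsto \alpha(\hat{R}_{-\Theta} w)$ is $C^1$ with derivative $-\alpha(\DD\hat{R}(\hat{R}_{-\Theta} w))$, bounded by $C_{\beta d}\| w \|_{p_b}$ uniformly in $\Theta$. Formally the derivative comes from $\partial_\Theta \ee^{-\ii\ell\Theta} = -\ii\ell\, \ee^{-\ii\ell\Theta}$, i.e.\ $\partial_\Theta(\hat{R}_{-\Theta} w) = -\DD\hat{R}(\hat{R}_{-\Theta} w)$; the obstacle is that $\DD\hat{R}$ maps out of $\sX_{p_b}$, but the composite functional $\alpha\circ\DD\hat{R}$ is nonetheless bounded there: since $\ee^{tL_1}$ is complex-linear, $|\alpha(\DD\hat{R} u)| = |\alpha((\ii\ell u_\ell)_\ell)| \le \beta_\alpha\big((\ell u_\ell)_\ell\big) \le \beta_d(u) \le C_{\beta d}\| u \|_{p_b}$ for $b > 3/2$ by \cref{thm:control-seminorms-by-weighted-sobolev}. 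Applying this with $u = \hat{R}_{-\Theta} w$ and using that $\hat{R}_{-\Theta}$ is an $\sX_{p_b}$-isometry gives the uniform derivative bound, and the differentiability follows by dominated convergence in the difference quotient. This step is where the hypothesis $b > 3/2$ is used.

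Now decompose $G(\Theta) = \beta(\Theta) + \alpha(\hat{R}_{-\Theta} v)$ with $\beta(\Theta) := \alpha(\hat{R}_{-\Theta}\hat{f}_{\stat} - \hat{f}_{\stat})$. By the previous paragraph $\beta$ is $C^1$ with $\beta(0) = 0$ and $\beta'(0) = -a \neq 0$, while $\Theta\mapsto\alpha(\hat{R}_{-\Theta} v)$ is Lipschitz with constant at most $C_{\beta d}\| v \|_{p_b} \le C_{\beta d}\delta_{\init}$. Shrinking to an interval $|\Theta| \le \delta_{\Theta}$ on which $|\beta'| \ge |a|/2$, the function $\beta$ is a strictly monotone bi-Lipschitz homeomorphism of $[-\delta_{\Theta}, \delta_{\Theta}]$ onto an interval containing $[-|a|\delta_{\Theta}/2,\, |a|\delta_{\Theta}/2]$, with $\operatorname{Lip}(\beta^{-1}) \le 2/|a|$. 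The equation $G(\Theta) = 0$ is then equivalent to the fixed-point equation $\Theta = T(\Theta) := \beta^{-1}\big(-\alpha(\hat{R}_{-\Theta} v)\big)$, and choosing $\delta_{\init}$ small enough (depending on $a$, $b$ and $\delta_{\Theta}$) the estimates just recorded show that $T$ maps $[-\delta_{\Theta}, \delta_{\Theta}]$ into itself and is a $\tfrac12$-contraction. Banach's fixed-point theorem then produces a unique $\Theta \in (-\delta_{\Theta}, \delta_{\Theta})$ with $\alpha(u) = 0$ for $u = \hat{R}_{-\Theta}\hat{f}_{\init} - \hat{f}_{\stat}$; since any solution with $|\Theta| < \delta_{\Theta}$ is automatically a fixed point of $T$, it is the only one.

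Finally, the quantitative bounds are read off directly: $|\Theta| = |T(\Theta)| \le \operatorname{Lip}(\beta^{-1})\,|\alpha(\hat{R}_{-\Theta} v)| \le \frac{2\| K_{\Theta} \|}{|a|\sqrt{2b-1}}\| \hat{f}_{\init} - \hat{f}_{\stat} \|_{p_b}$, which is the first claimed inequality. For the second, split $u = \hat{R}_{-\Theta} v + (\hat{R}_{-\Theta}\hat{f}_{\stat} - \hat{f}_{\stat})$: the first term has $\| \hat{R}_{-\Theta} v \|_{p_b} = \| \hat{f}_{\init} - \hat{f}_{\stat} \|_{p_b}$ by the isometry, and for the second the elementary estimate $|\ee^{-\ii\ell\Theta} - 1| \le \ell|\Theta|$ together with the regularity of $\hat{f}_{\stat}$ from \cref{thm:regularity-stationary-state} yields a bound of the form $|\Theta|\,\| \hat{f}_{\stat} \|_{p_b, 0}$. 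The only genuine difficulty is the uniform boundedness of $\alpha\circ\DD\hat{R}$ on $\sX_{p_b}$ from the second paragraph: naively one loses a power of $\ell$ upon differentiating in $\Theta$, and it is precisely the seminorm bound $\beta_d(u) \le C_{\beta d}\| u \|_{p_b}$, available only for $b > 3/2$, that compensates.
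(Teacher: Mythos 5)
Your proof is correct and follows essentially the same route as the paper: the paper defines $F(\Theta,\hat f_{\init})=\alpha(\hat R_{-\Theta}\hat f_{\init}-\hat f_{\stat})$, observes via \cref{thm:control-seminorms-by-weighted-sobolev} that it is $C^1$ in $\Theta$ with $\partial_\Theta F(0,\hat f_{\stat})=-\alpha(\DD\hat R\hat f_{\stat})\neq 0$, and invokes the implicit function theorem, whereas you unpack that theorem into its underlying contraction-mapping argument. Your explicit justification that $\alpha\circ\DD\hat R$ is bounded on $\sX_{p_b}$ through $\beta_\alpha((\ell u_\ell)_\ell)\le\beta_d(u)\le C_{\beta d}\|u\|_{p_b}$ for $b>3/2$ is exactly the point the paper leaves implicit, and your quantitative bounds coincide with the paper's.
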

\begin{proof}
  Define the function $F : \T \times \sX_{p_b} \mapsto \R$ by
  \begin{equation*}
    F(\Theta,\hat{f}_{\init}) =
    \alpha(R_{-\Theta} \hat{f}_{\init} - \hat{f}_{\stat}).
  \end{equation*}
  By Lemma~\ref{thm:control-seminorms-by-weighted-sobolev} the function $F$
  is continuously differentiable and
  \begin{equation*}
    \partial_\Theta F(0,\hat{f}_{\stat}) = - \alpha(\DD \hat{R}
    \hat{f}_{\stat}) \not = 0.
  \end{equation*}
  Hence by the implicit function theorem, there exists a unique
  inverse in the neighbourhood of $\hat{f}_{\stat}$ with the given
  control on \(\Theta\).

  For the control on $u$ note that
  \begin{equation*}
    \| u \|_{p_b} = \| \hat{R}_{\Theta} u \|_{p_b}
    \le \| \hat{f}_{\init} - \hat{f}_{\stat} \|_{p_b}
    + \| \hat{R}_{\Theta} \hat{f}_{\stat} - \hat{f}_{\stat} \|_{p_b}.
  \end{equation*}
  As $|\ee^{i\ell \Theta} - 1| \le \ell |\Theta|$, the second term can be
  bounded as
  \begin{equation*}
    \| \hat{R}_{\Theta} \hat{f}_{\stat} - \hat{f}_{\stat} \|_{p_b}
    \le |\Theta|\, \| \hat{f}_{\stat} \|_{p_b,0},
  \end{equation*}
  which is the claimed result.
\end{proof}

With this we can assemble the proof of the main theorem.
\begin{proof}[Proof of Theorem \ref{thm:nonlinear-control-final}]
  By Proposition~\ref{thm:regularity-stationary-state}, we find
  \begin{align*}
    \| \hat{f}_\stat \|_{p_{b_g-2},1}
    \le C\, \| \hat{g} \|_{p_{b_g}}, \quad\text{and}\quad
    \| \hat{f}_\stat \|_{p_{b_g-3/2},1/2}
    \le C\, \| \hat{g} \|_{p_{b_g}}.
  \end{align*}
  for a constant \(C\).  The definitions of \(r_r\), \(r_i\) and
  \(r_{\Theta}\) then imply
  \begin{equation*}
    \| r_{r} \|_{p_{b_r}} < \infty,\qquad
    \| r_{i} \|_{p_{b_r}} < \infty,\qquad
    \| r_{\Theta} \|_{p_{b_r}} < \infty,
  \end{equation*}
  and
  \begin{equation*}
    \| r_{r} \|_{p_{b_r-\frac 12},0} < \infty,\qquad
    \| r_{i} \|_{p_{b_r-\frac 12},0} < \infty,\qquad
    \| r_{\Theta} \|_{p_{b_r-\frac 12},0} < \infty
  \end{equation*}
  with \(b_r = b_g - 3/2\).

  By Lemma \ref{thm:local-well-posedness}, there exists a global weak
  solution $\hat{f}$, which is locally bounded in $\sX_{p_b}$ and
  weakly continuous.

  By Lemma \ref{thm:local-choice-theta}, we can choose $\delta$ small
  enough so that there exists an initial angle $\Theta(0)$ such that
  \begin{equation*}
    |\Theta(0)| \le C\, \| f_{\init} - \hat{f}_\stat \|_{p_b}
  \end{equation*}
  and for a constant $C_f$
  \begin{equation*}
    \| u_{\init} \|_{p_b} \le C_f\, \| f_{\init} - f_{\stat} \|_{p_b},
  \end{equation*}
  as the bound of $\| r_{\Theta} \|_{p_b}$ implies the control of
  $\|\hat{f}_{\stat}\|_{p_b,0}$.

  By setting $u = R_{-\Theta} \hat{f} - f_{\stat}$ and evolving
  $\Theta$ by
  \begin{equation*}
    \frac{\dd}{\dd t} \Theta = \dot{\Theta}
  \end{equation*}
  given by \eqref{eq:def-theta-dot}, we find an evolution for $u$ and
  $\Theta$ as long as
  \begin{equation}
    \label{eq:bootstrap-condition-polar-coordinates}
    \beta_d(u(t)) \le \frac 12 |\alpha(\DD \hat{R} \hat{f}_{\stat})|,
  \end{equation}
  because $|\alpha(\DD \hat{R} u)| \le \beta_d(u(t))$. Under this
  assumption, $u$ is locally bounded and weakly continuous, and
  $\dot{\Theta}$ is continuous. Moreover, if
  \eqref{eq:bootstrap-condition-polar-coordinates} holds up to a time
  $t$, the solution can be extended by a positive amount of time.

  Then $u$ is given by \eqref{eq:solution-u-duhamel}, because the
  evolution under $B$ has a unique solution and $L_2$ is a bounded
  operator. With the control $R_d$ on the coefficients, the estimates
  from Lemma \ref{thm:bootstrap-order-parameter} and Lemma \ref{thm:bootstrap-beta-d} show
  that there exist constants $\delta_R$ and $C$ such that
  \begin{align*}
    |\eta(t)| &\le C\, (1+t)^{-b_d}\, \|u_\init\|_{p_b},\\
    |\beta_d(u(t))| &\le C\, (1+t)^{\frac 32-b}\, \|u_\init\|_{p_b}
  \end{align*}
  if $R_d(t) \le \delta_R$. This in particular implies the existence of
  a constant $C_Q$ with
  \begin{equation*}
    |\alpha(Q(u(t)))| \le K\, \| K_\Theta\|\, |\eta(t)|\, \beta_d(u(t))
    \le C_Q (1+t)^{\frac 32 - b - b_d} \| u_{\init} \|^2_{p_{b}}
  \end{equation*}
  and the existence of $\delta_{\Theta}$ such that
  \begin{equation*}
    \beta_d(u(t)) \le \frac 14 |\alpha(\DD \hat{R} \hat{f}_{\stat})|
  \end{equation*}
  if $R(t) \le \delta_R$ and $\| u_{\init} \| \le \delta_\Theta$.

  Therefore, we can find a small enough $\delta$ such that for $\|
  u_{\init} \| \le \delta$, the bootstrap assumption $R_d(t) \le
  \delta_R$ implies
  \begin{equation*}
    \beta_d(u(t)) \le \frac 14 |\alpha(\DD \hat{R} \hat{f}_{\stat})|
  \end{equation*}
  and
  \begin{equation*}
    R_d(t) \le \frac 12 \delta_R.
  \end{equation*}
  By the continuity of the solution and the local well-posedness, this
  implies the existence of a global solution $u$ and $\Theta$
  satisfying $R_d(t) \le \delta_R$ for all times $t$. Then, in
  particular, the stated bounds on $\eta(t)$ and $\dot{\Theta}$ hold.
\end{proof}

\section*{Acknowledgements}

The author would like to thank David Gérard-Varet and Bastien
Fernandez for the very helpful discussions during the work.

\section*{Ethical statement}

Funding: The author was supported for the study by the ANR Chaire
d'Excellence ANR-11-IDEX-005 and the People Programme (Marie Curie
Actions) of the European Union’s Seventh Framework Programme
(FP7/2007-2013) under REA grant agreement n.\ PCOFUND-GA-2013-609102,
through the PRESTIGE programme coordinated by Campus France.

Conflict of Interest: The author previously was previously supported
by the UK Engineering and Physical Sciences Research Council (EPSRC)
grant EP/H023348/1 for the University of Cambridge Centre for Doctoral
Training, the Cambridge Centre for Analysis.

\printbibliography
\end{document}